\theoremstyle{plain}
\newtheorem{theorem}{Theorem}
\newtheorem{lemma}[theorem]{Lemma}
\newtheorem{corollary}[theorem]{Corollary}
\newtheorem{proposition}[theorem]{Proposition}
\newtheorem*{quotedtheorem}{Theorem}
\newtheorem{construction}{Construction}
\theoremstyle{definition}
\newtheorem{definition}{Definition}
\newtheorem{conjecture}{Conjecture}
\newtheorem{question}[conjecture]{Question}
\newtheorem{problem}{Problem}
\theoremstyle{remark}
\newtheorem{remark}[theorem]{Remark}
\newcommand{\e}{\varepsilon}
\newcommand{\B}[1]{{\bf #1}}
\newcommand{\C}[1]{{\protect\cal #1}}
\newcommand{\I}[1]{{\mathbb #1}}
\newcommand{\pp}{P}
\newcommand{\jsection}[1]{part \texttt{"#1"}}
\newcommand{\jSection}[1]{Part \texttt{"#1"}}
\newcommand{\flag}[1]{\mbox{\texttt{"#1"}}}
\newcommand{\CT}{\C T}
\title{THE CODEGREE THRESHOLD FOR 3-GRAPHS WITH INDEPENDENT NEIGHBOURHOODS}
\author{V. Falgas--Ravry\thanks{Department of Mathematics, Vanderbilt University, Nashville TN 37240, USA, and Institutionen f\"or matematik och matematisk statistik, Ume{\aa}  Universitet, 901 87 Ume{\aa}, Sweden. Supported by the Kempe foundation.} \and
E. Marchant\thanks{29 Woodside Close, HP6 5EF Amersham, UK. Research funded by
Trinity College, Cambridge.} \and 
O. Pikhurko\thanks{Mathematics Institute and DIMAP, University of Warwick, CV4 7AL Coventry, UK. Supported by ERC grant~306493 and EPSRC grant~EP/K012045/1.} \and E.~R. Vaughan\thanks{Centre for Discrete Mathematics, Queen Mary University of London, E1 4NS London, UK.}}
\begin{document}
\maketitle
\enlargethispage{4mm}
\begin{abstract}
Given a family of $3$-graphs $\mathcal{F}$, we define its codegree threshold
$\mathrm{coex}(n, \mathcal{F})$ to be the largest number $d=d(n)$ such that there exists an $n$-vertex $3$-graph in which every pair of vertices is contained in at least $d$  $3$-edges but which contains no member of $\mathcal{F}$ as a subgraph. Let $F_{3,2}$ be the $3$-graph on $\{a,b,c,d,e\}$ with $3$-edges
$abc$, $abd$, $abe$ and $cde$.

In this paper, we give two proofs that 
\[\mathrm{coex}(n, \{F_{3,2}\})= \left( \frac{1}{3}+o(1)\right)n,\]
the first by a direct combinatorial argument and the second via a flag algebra
computation. Information extracted from the latter proof is then used to obtain
a stability result, from which in turn we derive the exact codegree threshold
for all sufficiently large $n$:
\[\mathrm{coex}(n, \{F_{3,2}\})= \left\{ \begin{array}{ll}
\lfloor n/3 \rfloor-1 & \textrm{if $n$ is congruent to $1$ modulo $3$,}\\
\lfloor n/3 \rfloor & \textrm{otherwise}.
\end{array} 
\right.\]
In addition we determine the set of codegree-extremal configurations for all sufficiently large $n$.
\end{abstract}
%\begin{keywords}
%Codegree threshold; extremal hypergraph theory; 3-graphs
%\end{keywords}

%\begin{AMS}
%05D05; 05C65
%\end{AMS}

\pagestyle{myheadings}
\thispagestyle{plain}
\markboth{V. FALGAS-RAVRY, E. MARCHANT, O. PIKHURKO, E.~R. VAUGHAN}{THE CODEGREE THRESHOLD OF $F_{3,2}$}

\section{Introduction}
\subsection{Tur\'an-type problems}\label{turansection}
We begin with some standard definitions. Let $r,n \in \mathbb{N}$. We write
$[n]$ for the discrete interval $\{1,2, \ldots n\}$. Also, given a set $S$ we
denote by $S^{(r)}$ the collection of all $r$-subsets from $S$.

An \emph{$r$-graph} is a pair of sets $G=(V,E)$, where $V=V(G)$ is a set of
\emph{vertices} and $E=E(G)$ is a collection of $r$-sets from $V$, which
constitute the \emph{$r$-edges} of $G$. An $r$-graph $G$ is \emph{nonempty} if
$E(G) \neq \emptyset$. A \emph{subgraph} of $G$ is an $r$-graph $H$ with $V(H)
\subseteq V(G)$ and $E(H) \subseteq E(G)$. Given a family of $r$-graphs
$\mathcal{F}$, we say that $G$ is \emph{$\mathcal{F}$-free} if no
member of $\mathcal{F}$ is isomorphic to a subgraph of $G$.

One of the central problems in extremal combinatorics is determining the maximum
number $\textrm{ex}(n, \mathcal{F})$ of $r$-edges that an $r$-graph on $n$ vertices
may contain while remaining $\mathcal{F}$-free, where $\mathcal{F}$ is a family
of nonempty $r$-graphs. The function $n \mapsto \textrm{ex}(n, \mathcal{F})$ is
known as the \emph{Tur\'an number} of $\mathcal{F}$.
\begin{problem}\label{Turannumber}
Let $\mathcal{F}$ be a family of nonempty $r$-graphs. Determine the Tur\'an
number of $\mathcal{F}$.
\end{problem}
Often computing the Tur\'an number exactly may be difficult, and so, lowering
our sights, we are interested in the asymptotic behaviour of the Tur\'an
function: what is the asymptotically maximal proportion of all possible edges
that an $\mathcal{F}$-free $r$-graph may contain? An easy averaging argument
shows that the nonnegative sequence $\textrm{ex}(n, \mathcal{F})/ \binom{n}{r}$
is nonincreasing, and hence converges to a limit as $n$ tends to infinity. This
limit is known as the \emph{Tur\'an density} of $\mathcal{F}$, and denoted by
$\pi(\mathcal{F})$.
\begin{problem}\label{Turandensity}
Let $\mathcal{F}$ be a family of nonempty $r$-graphs. Determine the Tur\'an
density of $\mathcal{F}$.
\end{problem}

These two problems have been studied very successfully in the case $r=2$,
corresponding to ordinary ($2$-)graphs. Tur\'an determined the Tur\'an number of
complete graphs~\cite{Turan41}, while Erd\H{o}s and Stone~\cite{ErdosStone46}
fully resolved Problem~\ref{Turandensity} in a seminal result relating the
Tur\'an density of a family of graphs to its chromatic number.

Despite recent progress, this stands in some contrast to the situation when
$r\geq 3$. Indeed few Tur\'an densities are known even for $3$-graphs, and the
problem of determining them is known to be hard in general. Let us introduce
here a few of the $3$-graphs relevant to our discussion. As a convention, we
will write $xyz$ for the $3$-edge $\{x,y, z\}$ and $\pi(F_1, F_2, \ldots F_t)$
for the Tur\'an density $\pi(\{F_1, F_2, \ldots F_t\})$.

Let $K_4$ denote the complete $3$-graph on $4$ vertices, and let $K_4^-$ denote
the $3$-graph obtained from $K_4$ by deleting one of its edges. Let $F_{3,2}$ be
the $3$-graph $([5], \{123, 124, 125, 345\})$. Finally, let $F_7$ be the
\emph{Fano plane}, namely the (unique up to isomorphism) $3$-graph on $7$
vertices in which every pair of vertices is contained in exactly one $3$-edge.

Almost no Tur\'an densities or Tur\'an numbers for $3$-graphs were known until
de Caen and F\"uredi~\cite{deCaenFuredi00} established that $\pi(F_7)=3/4$. (A
notable exception is a result of Bollob\'as~\cite{Bollobas74}.) The Tur\'an
number of the Fano plane was independently determined shortly afterwards by
Keevash and Sudakov~\cite{KeevashSudakov05} and F\"uredi and
Simonovits~\cite{FurediSimonovits05}. Around the same time, F\"uredi, Pikhurko
and Simonovits determined first the Tur\'an
density~\cite{FurediPikhurkoSimonovits03} and then the Tur\'an
number~\cite{FurediPikhurkoSimonovits05} of $F_{3,2}$.

The next major development as far as computing Tur\'an densities is concerned
was the advent of Razborov's semi-definite
method~\cite{Razborov10}. With the assistance of computers, this method has been
used in recent years to significantly increase the number of known Tur\'an
densities for $3$-graphs~\cite{BaberTalbot12, FalgasRavryVaughan13}.

\subsection{The codegree problem}\label{codegreesection}
Given a $3$-graph $G$ and a vertex $x \in V(G)$, the \emph{degree} $d(x)$ of $x$
in $G$ is the number of $3$-edges of $G$ containing $x$. The \emph{minimum
degree} of $G$ is $\delta(G)= \min_{x \in V(G)}d(x)$. It is not hard to see that
the Tur\'an density problem for $3$-graphs is equivalent to determining asymptotically
what minimum degree condition forces a $3$-graph on $n$ vertices to
contain a copy of a member of a given family $\mathcal{F}$ as a subgraph.

A natural variant is to consider what minimum \emph{codegree} condition is
required to force an $\mathcal{F}$-subgraph. Here, the \emph{codegree} $d(x,y)$
of two distinct vertices $x,y$ in a $3$-graph $G$ is the number of $3$-edges of
$G$ which contain the pair $\{x,y\}$. (We may sometimes write this as $d_G(x,y)$
to emphasize that we are taking the codegree in $G$ and not some other
$3$-graph.) The \emph{minimum codegree} $\delta_2(G)$ of $G$ is as the name
suggests the minimum of $d(x,y)$ over all pairs of vertices from $V(G)$.

We may then define for a family of nonempty $3$-graphs $\mathcal{F}$ the
\emph{codegree threshold} $\mathrm{coex}(n, \mathcal{F})$ to be the maximum of
$\delta_2(G)$ over all $\mathcal{F}$-free $3$-graphs $G$ on $n$ vertices.  This
is the codegree analogue of the Tur\'an number.
\begin{problem}\label{codegreethreshold}
Let $\mathcal{F}$ be a family of nonempty $3$-graphs. Determine the codegree
threshold of $\mathcal{F}$.
\end{problem}
Again it may be that in general computing the codegree threshold proves
difficult, and that we would first be interested in determining the asymptotic behaviour of
$\mathrm{coex}(n, \mathcal{F})$. Following the analogy with the Tur\'an-type
problems, it is natural to consider the sequence $\mathrm{coex}(n,
\mathcal{F})/(n-2)$ or some close relative. Here however we do not in general
have monotonicity: Lo and Markstr\"om~\cite{LoMarkstrom12} showed that neither
of $\mathrm{coex}(n, K_4)/n$ and $\mathrm{coex}(n, K_4)/(n-2)$ is nonincreasing.
The limit of $\mathrm{coex}(n, \mathcal{F})/n$ does exist however, as first
shown by Mubayi and Zhao~\cite{MubayiZhao07}. Thus we may define the
\emph{codegree density} of $\mathcal{F}$ to be
\[ \gamma(\mathcal{F}):= \lim_{n \rightarrow \infty} \frac{\mathrm{coex}(n,
\mathcal{F})}{n-2}.\]
(Obviously choosing $n$ or $n-2$ in the denominator does not affect the limit.)

This gives us a codegree analogue of the Tur\'an density for $3$-graphs.
\begin{problem}\label{codegreedensity}
Let $\mathcal{F}$ be a family of nonempty $3$-graphs. Determine the codegree
density $\gamma(\mathcal{F})$.
\end{problem}
What is the relationship between $\pi(\mathcal{F})$ and $\gamma(\mathcal{F})$?
By counting $3$-edges in two ways it is easy to show that $\gamma(\mathcal{F})\leq
\pi(\mathcal{F})$.

The first result on codegree density is due to Mubayi~\cite{Mubayi05}, who
showed $\gamma(F_7)=1/2$. This gave an example where $\gamma(\mathcal{F})$ is
strictly less than $\pi(\mathcal{F})$ (since de Caen and F\"uredi had shown
$\pi(F_7)=3/4$). The codegree threshold for the Fano plane was determined for
all sufficiently large $n$ by Keevash~\cite{Keevash09}, who used hypergraph
regularity and quasirandomness to get a stability result from which he was able
to proceed to the exact result via more standard combinatorial arguments. His
method gave slightly more than just the codegree threshold, as it also
identified exactly which $3$-graphs could attain it, namely complete bipartite
$3$-graphs. DeBiasio and Jiang~\cite{DeBiasioJiang12} later gave a simpler proof
that $\mathrm{coex}(n, \mathcal{F})= \lfloor n/2 \rfloor$ for $n$ sufficiently
large which avoided the use of regularity.

Except for the Fano plane, almost no codegree results are known for $3$-graphs.
Keevash and Zhao~\cite{KeevashZhao07} studied the codegree density of projective
geometries, following on earlier work of Keevash~\cite{Keevash05} on their
Tur\'an densities. Nagle~\cite{Nagle99} conjectured that $\gamma(K_4^-)=1/4$,
while Czygrinow and Nagle~\cite{CzygrinowNagle01} conjectured that
$\gamma(K_4)=1/2$, with lower-bound constructions coming in both cases from
random tournaments. The first author~\cite{FalgasRavry13} gave non-isomorphic lower bound constructions for $\gamma(K_t)$ for general $t$. Recently, a subset of the authors proved $\gamma(K_4^-)=1/4$
using flag algebras~\cite{FalgasRavryPikhurkoVaughan13}.

\subsection{$3$-graphs with independent neighbourhoods}
Given a $3$-graph $G$ and a pair of distinct vertices $x,y \in V(G)$, their
\emph{joint neighbourhood} in $G$ is
\[\Gamma(x,y)=\{z \in V(G): \ \{x,y,z\} \in E(G)\}.\]
In an $F_{3,2}$-free $3$-graph, the joint neighbourhoods form independent
(edge-free) subsets of the vertex set. Such $3$-graphs are thus said to have
\emph{independent neighbourhoods}.

As mentioned in Section~\ref{turansection}, the Tur\'an density and Tur\'an
number of $F_{3,2}$ were determined by F\"uredi, Pikhurko and
Simonovits~\cite{FurediPikhurkoSimonovits03,FurediPikhurkoSimonovits05}, who
showed that the extremal configurations were `one-way bipartite' $3$-graphs.
\begin{figure}
\centering

\begin{tikzpicture}

\draw (0,0) circle [radius=1.418];
\draw (3,0) circle [radius=1];
\fill[black!10] (0,-0.5) -- (3, 0) -- (0, 0.5) -- (0, -0.5);
\node (an) at (0,-1.6)[] {A};
\node (bn) at (3, -1.6)[] {B};

\end{tikzpicture}
\caption{Construction~\ref{onewaybip}}
\end{figure}
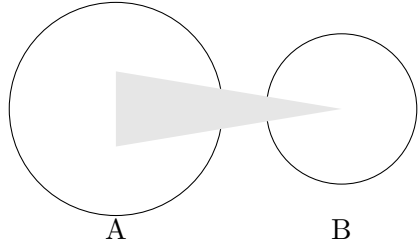

\begin{construction}\label{onewaybip}
Given a vertex set $V$ and a bipartition $V=A \sqcup B$, we define a one-way
bipartite $3$-graph $D_{A, B}$ on $V$ by taking as the $3$-edges all triples
$\{a_1,a_2, b\}$ with $a_1, a_2 \in A$ and $b \in B$.
\end{construction}
It is easy to see that $D_{A,B}$ has independent neighbourhoods, and that the
number of $3$-edges in $D_{A,B}$ is maximised when $\vert A \vert = 2 \vert B
\vert +O(1)$.
\begin{quotedtheorem}[F\"uredi, Pikhurko and
Simonovits~\cite{FurediPikhurkoSimonovits05}]
There exists $n_0\in \mathbb{N}$ such that if $G$ is a $3$-graph on $n\geq n_0$ vertices with independent neighbourhoods and
$\vert E(G)\vert =\mathrm{ex}(n, F_{3,2})$, then there exists a partition $V(G)=A\sqcup
B$ of its vertex set such that $G=D_{A,B}$. 
\end{quotedtheorem}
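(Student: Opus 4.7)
The plan is to combine a stability version of the earlier Tur\'an density result for $F_{3,2}$ with a local-modification argument to upgrade "approximately $D_{A,B}$" to "exactly $D_{A,B}$". Throughout, I use the equivalence noted in the paper: $G$ has independent neighbourhoods if and only if $G$ is $F_{3,2}$-free.

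First, I would appeal to (or reprove, as a step in the argument) a stability version of the density result $\pi(F_{3,2})=4/9$ from \cite{FurediPikhurkoSimonovits03}: for every $\eta>0$ there exist $\delta>0$ and $n_1$ such that any $F_{3,2}$-free 3-graph $G$ on $n\geq n_1$ vertices with $|E(G)|\geq(4/9-\delta)\binom{n}{3}$ admits a partition $V(G)=A\sqcup B$ with $|A|=(2/3+o(1))n$, $|B|=(1/3+o(1))n$, such that $|E(G)\,\triangle\,E(D_{A,B})|\leq\eta n^3$. Since $G$ is edge-extremal, $|E(G)|\geq|E(D_{A',B'})|=(4/9+o(1))\binom{n}{3}$ for the balanced partition, so the hypothesis holds with $\delta$ as small as we wish.

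Second, I would fix such a partition $(A,B)$ minimising the symmetric difference $|E(G)\,\triangle\,E(D_{A,B})|$, and classify vertices. Call $a\in A$ \emph{typical} if its link $L(a)$ looks almost like the bipartite graph it should be in $D_{A,B}$, and analogously for $b\in B$. By stability, all but $o(n)$ vertices are typical. For any vertex $v$, the change in $|E(G)|$ induced by moving $v$ from $A$ to $B$ (or vice versa) is nonpositive by optimality of the partition, which severely constrains the degree pattern of each atypical vertex.

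Third, and this is where the main difficulty lies, I would show that there are in fact \emph{no} atypical vertices, no \emph{bad} edges (edges not of the $AAB$ form), and no missing $D_{A,B}$-edges. The $F_{3,2}$-free condition enters via the independence of every joint neighbourhood $\Gamma(x,y)$. For instance, if $b\in B$ had its link $L(b)$ contain a pair $\{b',b''\}\subseteq B$ or a cross pair $\{a,b'\}$, combining with many typical $A$-vertices would build an $F_{3,2}$; similarly a present $AAA$- or $ABB$- or $BBB$-edge, or a missing $AAB$-edge, can be shown either to create an $F_{3,2}$ (when combined with the near-complete $D_{A,B}$-structure on typical vertices) or to allow a local edit that strictly improves $|E(G)|$, contradicting extremality. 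Iterating this local clean-up removes all deviations, after which one reads off that $|A|=2|B|+O(1)$ is forced by edge-maximisation of $|E(D_{A,B})|$ subject to $|A|+|B|=n$, giving $G=D_{A,B}$ for some partition.

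The main obstacle is the third step: the exact conclusion demands a careful case analysis of how a single deviant triple interacts with the rigid structure forced on typical vertices by $F_{3,2}$-freeness, and one needs quantitative control making the "atypical" set shrink to a bounded (ideally empty) set before the final argument applies. A subsidiary technical point is the minimum-degree/codegree pruning that lets one replace $o(n)$-size error terms by $O(1)$-size ones; this is standard but has to be set up with the correct constants for the $4/9$-density regime.
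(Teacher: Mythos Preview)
This statement is a \emph{quoted} theorem from F\"uredi, Pikhurko and Simonovits~\cite{FurediPikhurkoSimonovits05}; the present paper does not prove it at all but merely cites it as background. There is therefore no ``paper's own proof'' to compare your attempt against.

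That said, your outline is essentially the strategy used in~\cite{FurediPikhurkoSimonovits05}: start from the stability version of the density result in~\cite{FurediPikhurkoSimonovits03}, choose a partition minimising the deviation from $D_{A,B}$, and then run a vertex-by-vertex clean-up using $F_{3,2}$-freeness together with edge-maximality to eliminate all bad and missing edges. Your sketch correctly identifies the third step as the crux; in the original paper this is handled by first showing every vertex has degree close to the expected value (else delete it and add back a clone of a good vertex, gaining edges), which forces the atypical set to be empty rather than merely $o(n)$, after which the case analysis of a single bad or missing triple is short. What you have written is a faithful high-level summary of that argument, not an alternative route.
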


Bohman, Frieze, Mubayi and Pikhurko~\cite{BohmanFriezeMubayiPikhurko10}
conjectured that a natural modification of Construction~\ref{onewaybip} was
optimal for the codegree problem for $F_{3,2}$. 
\begin{construction}\label{orcyclebip}
Given a vertex set $V$, and a tripartition $V=A\sqcup B \sqcup C$, we define a
$3$-graph $T_{A,B,C}$ on $V$ by taking the union of $D_{A,B}$, $D_{B,C}$ and
$D_{C,A}$.
\end{construction}

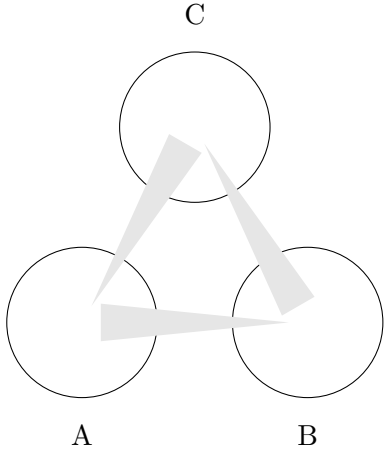
\begin{figure}
\centering
\begin{tikzpicture}

\draw (0,0) circle [radius=1];
\draw (3,0) circle [radius=1];
\coordinate (alpha) at ($ (0,0)!1!60:(3,0) $);
\draw (alpha) circle [radius=1];

\fill[black!10] (0.25,-0.25) -- (2.75, 0) -- (0.25, 0.25) -- (0.25, -0.25);

\coordinate (d) at ($ (3,0)!1!120:(3.25,-0.25) $);
\coordinate (e) at ($ (3,0)!1!120:(5.75,0) $); 
\coordinate (f) at ($ (3,0)!1!120:(3.25,0.25) $);
\fill[black!10] (d) -- (e)  -- (f) --(d);

\coordinate (a) at ($ (0,0)!1!240:(0.25,-0.25) $);
\coordinate (b) at ($ (0,0)!1!240:(2.75,0) $); 
\coordinate (c) at ($ (0,0)!1!240:(0.25,0.25) $);

\coordinate (a') at ($ (alpha)+(a)$);
\coordinate (b') at ($ (alpha)+(b)$);
\coordinate (c') at ($ (alpha)+(c)$);
\fill[black!10] (a') -- (b')  -- (c') --(a');

\node (cn) at ($(alpha)+(0,1.5)$)[] {C};
\node (an) at (0, -1.5)[] {A};
\node (bn) at (3, -1.5)[] {B};
\end{tikzpicture}
\caption{Construction~\ref{orcyclebip}}
\end{figure}

Again we have that $T_{A,B,C}$ has independent neighbourhoods, and 
\[\delta_2(T_{A,B,C})= \min\left( \vert A\vert, \vert B \vert, \vert C \vert
\right) -1 ,\]
which is maximised when the three parts $A, B, C$ are \emph{balanced} -- that
is, have sizes as equal as possible. Thus $\mathrm{coex}(n, F_{3,2})\geq \lfloor
n/3 \rfloor-1$. Bohman, Frieze, Mubayi and
Pikhurko~\cite{BohmanFriezeMubayiPikhurko10} conjectured that this
provides a tight lower-bound for the codegree density.
\begin{conjecture}[Bohman, Frieze, Mubayi and
Pikhurko~\cite{BohmanFriezeMubayiPikhurko10}]\label{bfmpconj}
\[\gamma(F_{3,2})=\frac{1}{3}.\]
\end{conjecture}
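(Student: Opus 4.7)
The lower bound $\gamma(F_{3,2})\geq 1/3$ is already supplied by the balanced $T_{A,B,C}$ of Construction~\ref{orcyclebip}, so the plan is to establish the matching upper bound: any $F_{3,2}$-free $3$-graph $G$ on $n$ vertices with $\delta_2(G)\geq d$ must satisfy $d\leq n/3+o(n)$.

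The basic ingredient is that $G$ is $F_{3,2}$-free if and only if every joint neighbourhood $\Gamma(u,v)$ is independent in $G$, so the codegree condition immediately supplies, for each pair, an independent set of size at least $d$. The key structural lemma is: if $\{x,y\}\subseteq \Gamma(u,v)$, then $\Gamma(x,y)\cap\Gamma(u,v)=\emptyset$, because any common vertex $z$ would give a $3$-edge $xyz$ entirely inside the independent set $\Gamma(u,v)$. Starting from any $3$-edge $uvw$ this already produces two disjoint independent sets $X_1=\Gamma(u,v)$ and $X_2=\Gamma(x,y)$ for arbitrary $\{x,y\}\subseteq X_1$, each of size $\geq d$, forcing $2d\leq n$ and the preliminary bound $d\leq n/2$.

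To push the bound down to $n/3$, I would iterate the construction once more and pick $\{a,b\}\subseteq X_2$, setting $X_3=\Gamma(a,b)$, so that $X_2\cap X_3=\emptyset$ by the same lemma. Disjointness of $X_1$ and $X_3$ is however not automatic: in the extremal example $T_{A,B,C}$ it corresponds exactly to the cyclic closure $A\to B\to C\to A$ of the three joint neighbourhoods. The plan is then to show, via an averaging argument over the choices of intermediate pairs $\{x,y\}\in X_1^{(2)}$ and $\{a,b\}\in X_2^{(2)}$, that whenever $d>(1/3+\varepsilon)n$ some choice yields $X_1,X_2,X_3$ pairwise disjoint, whence $3d\leq n+O(1)$ and the desired contradiction. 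Concretely one would control the expected value of $|X_1\cap X_3|$ by double-counting triples $(\{a,b\},z)$ with $z\in X_1\cap \Gamma(a,b)$, and argue via $F_{3,2}$-freeness that a large expected overlap forces a $3$-edge lying inside one of the three independent sets $X_i$.

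The main obstacle is exactly this last step: the $F_{3,2}$-free constraint is purely local, whereas the required disjointness of $X_1$ and $X_3$ is a global condition, and $X_3$ may a priori ``spiral back'' into $X_1$ without creating a local $F_{3,2}$. Ruling this out cleanly seems to require aggregating the local constraints across many pairs simultaneously, which is precisely what the flag algebra / semidefinite method of Razborov~\cite{Razborov10} is designed to do: it bundles all local $F_{3,2}$-constraints into one semidefinite program whose optimum pins down $\gamma(F_{3,2})=1/3$. Information extracted from the numerical optimiser — dual witnesses, sharp inequalities, and descriptions of the extremal configurations — can then be fed back into a combinatorial stability argument and into the exact count for large $n$, as announced in the abstract.
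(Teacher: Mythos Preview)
Your proposal correctly identifies the lower bound and a valid structural observation (disjointness of $\Gamma(u,v)$ and $\Gamma(x,y)$ whenever $\{x,y\}\subseteq\Gamma(u,v)$), but the third step is a genuine gap that you yourself flag: there is no mechanism in your argument forcing $X_1\cap X_3=\emptyset$, and the averaging sketch you offer is not concrete. In the extremal $T_{A,B,C}$ itself one can pick $\{a,b\}\subseteq X_2$ from the ``wrong'' side and get $X_3=X_1$ exactly, so any valid argument must exhibit a \emph{specific} good choice, and controlling the expectation of $|X_1\cap X_3|$ by double-counting does not obviously do this: a large overlap $X_1\cap X_3$ produces no $3$-edge inside any $X_i$ and no copy of $F_{3,2}$ in general. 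Your proposed contradiction simply does not materialise.

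The paper does give a purely combinatorial proof avoiding flag algebras, but by a quite different route. Rather than hunting for three pairwise-disjoint neighbourhoods, it introduces \emph{conditional codegree densities} $\gamma(\mathcal{F}\vert H)$ and an \emph{extension lemma} (Lemma~\ref{lem:extcodegree}): given any fixed subgraph $H$ inside a high-codegree $G$, one can extend $H$ by a new vertex whose link on $V(H)^{(2)}$ has total $\underline{\alpha}$-weight at least $\delta_2(G)/|V(G)|$, for any prescribed weighting $\underline{\alpha}$. By choosing $\underline{\alpha}$ cleverly, one shows (Lemmas~\ref{lem:s3'}--\ref{lem:k4}) that codegree above $1/3$ forces $G$ to contain $F_{3,2}$, or $K_4$, or ever larger stars $S_k$; the loop closes via $\gamma(F_{3,2}\vert S_k')\le k/(3k-1)\to 1/3$. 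This iterated-extension machinery is precisely what substitutes for your missing third-disjointness step.

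Your fallback to flag algebras is indeed the paper's second proof, but as written your proposal supplies neither the certificate nor any of the inequalities, so as a stand-alone argument it remains a pointer rather than a proof.
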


\subsection{Results and structure of the paper}
In this paper we show that 
\[\mathrm{coex}(n, \{F_{3,2}\})= \left\{ \begin{array}{ll}
\lfloor n/3 \rfloor-1 & \textrm{if $n$ is congruent to $1$ modulo $3$}\\
\lfloor n/3 \rfloor & \textrm{otherwise,}
\end{array}
\right.\]
for all $n$ sufficiently large, and determine the set of extremal configurations
(which are close to but distinct from balanced $T_{A,B,C}$ configurations in
general). This settles Conjecture~\ref{bfmpconj} in the affirmative and fully
resolves Problems~\ref{codegreethreshold} and~\ref{codegreedensity} for the
family $\mathcal{F}=\{F_{3,2}\}$ and $n$ sufficiently large.

We first give two proofs that the codegree density of $F_{3,2}$ is $1/3$. 
\begin{theorem}[Codegree density]\label{f32density}
\[\gamma(F_{3,2})=\frac{1}{3}.\]
\end{theorem}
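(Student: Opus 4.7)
The lower bound $\gamma(F_{3,2}) \geq 1/3$ is already in hand: Construction~\ref{orcyclebip} with balanced parts $|A|, |B|, |C| \in \{\lfloor n/3 \rfloor, \lceil n/3 \rceil\}$ produces an $F_{3,2}$-free 3-graph with $\delta_2 = \lfloor n/3 \rfloor - 1$. The real task is the matching upper bound $\gamma(F_{3,2}) \leq 1/3$, and I would approach this in two ways: a direct combinatorial argument and a flag algebra computation.

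For the combinatorial proof, suppose $G$ is $F_{3,2}$-free on $n$ vertices with $\delta_2(G) \geq (1/3+\e)n$ for some fixed $\e > 0$ and $n$ large. The defining property here is that every joint neighborhood $\Gamma(x,y)$ is an independent set of $G$, i.e., $G[\Gamma(x,y)]$ contains no 3-edge. Fix an edge $\{x,y,z\}$ of $G$ and write $I_1 = \Gamma(y,z)$, $I_2 = \Gamma(x,z)$, $I_3 = \Gamma(x,y)$; these are independent sets of size at least $(1/3+\e)n$ each, and since $|I_1 \cup I_2 \cup I_3| \leq n$, inclusion--exclusion forces substantial pairwise overlaps. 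A short case check shows that whenever $w \in I_2 \cap I_3$ we have $y, z \in \Gamma(w,x)$, and independence of $\Gamma(w,x)$ then yields $\Gamma(w,x) \cap I_1 = \emptyset$. I would try to combine such disjointness relations across many edges and many choices of $w$ to force $V(G)$ to look approximately like the tripartition of $T_{A,B,C}$, and then verify directly that any such approximately-tripartite structure must satisfy $\delta_2 \leq n/3 + o(n)$, contradicting the assumption.

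As a second proof I would set up a flag algebra computation in the codegree variant of Razborov's framework, as done for $\gamma(K_4^-)$ in \cite{FalgasRavryPikhurkoVaughan13}. One takes a labelled pair as the type, enumerates all $F_{3,2}$-free flags up to some size (likely $6$ or $7$), and asks an SDP solver for a rational positive semidefinite certificate that the codegree density does not exceed $1/3$. A welcome side benefit is that the dual solution identifies the flag densities that must vanish at extremality, yielding structural information that will later be needed to upgrade this density bound to the exact and stability results stated in the abstract.

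The principal obstacle on the combinatorial side is that a pairwise argument applied to any two of $I_1, I_2, I_3$ only yields $\gamma \leq 1/2$; reaching $1/3$ requires a genuine three-way interaction among the joint neighborhoods, and the delicate part is globalising the tripartite-like structure imposed at a single edge to a partition of the whole vertex set while keeping $o(n)$ error terms under control. For the flag algebra route, the delicate point is tuning the choice of flags so that the SDP lands exactly at $1/3$ rather than a slightly larger value, and then converting the floating-point output into a rigorous rational certificate.
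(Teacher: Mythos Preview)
Your flag algebra plan is essentially what the paper does in Section~\ref{flagsection}: a codegree-adapted SDP with types on $0$, $2$, and $4$ vertices and $F_{3,2}$-free flags on six vertices, rounded to a rational certificate. So that half of the proposal is on target.

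Your combinatorial sketch, however, is not a proof and does not resemble the paper's first argument. You propose to fix a single edge $xyz$, study the three neighbourhoods $I_1,I_2,I_3$, and then ``globalise'' the local tripartite picture; you yourself flag the globalisation as the delicate step, and indeed nothing in the sketch explains how to carry it out. The paper avoids this local-to-global problem entirely. Its combinatorial proof (Section~\ref{edssection}) rests on an \emph{extension lemma}: if $G$ has codegree density $c$ and contains a small subgraph $H$, then for any weighting $\underline{\alpha}$ on the pairs of $V(H)$ one can extend $H$ by one vertex whose link picks up $\underline{\alpha}$-weight at least $c$. Using this, the paper bootstraps through a sequence of auxiliary $3$-graphs: it shows $\gamma(F_{3,2},S_3')\le 1/3$ trivially (where $S_k'$ is a star $S_k$ with the centre duplicated), then proves $\gamma(F_{3,2}\mid S_k')\le k/(3k-1)$ and $\gamma(F_{3,2},S_{k+1},K_4\mid S_k')\le 1/3$ by carefully chosen weightings, and combines these with blow-up invariance and $\gamma(F_{3,2}\mid K_4(2))\le 1/3$ to run an induction on $k$. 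Taking $k\to\infty$ gives $\gamma(F_{3,2})\le 1/3$.

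In short, the paper never tries to extract a global tripartition from codegree alone; instead it shows that high codegree forces larger and larger stars (or $K_4$, or $F_{3,2}$ itself) to appear, and the star bound $k/(3k-1)\to 1/3$ does the work. Your inclusion--exclusion idea on $I_1,I_2,I_3$ is natural but, as written, stalls exactly where you say it does.
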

In Section~\ref{edssection}, we give a purely combinatorial proof of
Theorem~\ref{f32density} due to Marchant, which appeared in his PhD
thesis~\cite{Marchant11}. In Section~\ref{flagsection}, we adapt the
semi-definite method of Razborov to the codegree setting to give a second proof
of Theorem~\ref{f32density}. While this second proof, a computer-assisted flag
algebra calculation, is not nearly so elegant, it gives us some information
about the structure of near-extremal $3$-graphs. This information can be used
together with a hypergraph removal lemma to prove a stability result. To state
this formally, we need to make one more definition.
\begin{definition}
Let $G$ and $H$ be $3$-graphs on vertex sets of size $n$  The \emph{edit
distance} between $G$ and $H$ is the minimum number of changes needed to make
$G$ into an isomorphic copy of $H$, where a change consists in replacing an edge
by a non-edge or vice versa.
\end{definition}
\begin{theorem}[Stability]\label{f32stability}
For all $\varepsilon>0$ there exist $\delta>0$ and $n_0\in\mathbb{N}$ such that if	
$G$ is an $F_{3,2}$-free $3$-graph on $n\geq n_0$ vertices with
\[ \delta_2(G)\geq\left(\frac{1}{3}-\delta\right)n,\]
then $G$ lies at edit distance at most $\varepsilon \binom{n}{3}$ from a balanced $T_{A,B,C}$ construction.
\end{theorem}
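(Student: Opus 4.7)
The plan is to combine the flag-algebra SDP certificate produced during the proof of Theorem~\ref{f32density} with a hypergraph removal lemma, in the now-standard stability framework. The first step is to extract from the SDP a finite list $\mathcal{Z}$ of \emph{zero flags}: small $3$-graphs $F$ whose density in any $3$-graph $G$ satisfying the hypotheses of Theorem~\ref{f32stability} must be $o(1)$ as $\delta\to 0$. These are the flags whose SDP coefficients carry strict slack in the certificate, together with their neighbours obtained by the standard ``differentiation'' argument on the bound $\gamma(F_{3,2})\le 1/3$. Each such $F$ can then be verified by hand to have density zero in the balanced construction $T_{A,B,C}$, so that $\mathcal{Z}$ is consistent with the target extremal configuration.

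Next I would invoke the hypergraph removal lemma of R\"odl--Nagle--Skokan--Kohayakawa--Gowers--Tao: since for each $F\in\mathcal{Z}$ the number of copies of $F$ in $G$ is $o(n^{\vert V(F)\vert})$, one can delete at most $(\varepsilon/2)\binom{n}{3}$ triples from $G$ to obtain a $3$-graph $G'$ that is free of every member of $\mathcal{F}:=\{F_{3,2}\}\cup\mathcal{Z}$ while retaining minimum codegree $\bigl(1/3-o(1)\bigr)n$ on all but $o(n^2)$ pairs. The proof then reduces to the purely structural claim that every such $G'$ agrees with some balanced $T_{A,B,C}$ on all but $o(n^3)$ triples; combined with the removal step, this yields an edit-distance bound of $(\varepsilon/2+o(1))\binom{n}{3}$, which is within tolerance.

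To recover the tripartition from $G'$, I would define an auxiliary $2$-graph $\Phi$ on $V(G')$ whose edges are the pairs $\{u,v\}$ exhibiting the ``same-part signature'' of $T_{A,B,C}$: for instance, pairs whose codegree in $G'$ is close to $n/3$ and whose joint link is close to the expected pattern of a complete bipartite graph to one class together with a clique on a second. The $\mathcal{F}$-freeness of $G'$, applied to any local deviation from this pattern, should force $\Phi$ to be a complete tripartite graph minus $o(n^2)$ edges, whose three colour classes play the role of $A,B,C$. The min-codegree condition then forces $\vert A\vert,\vert B\vert,\vert C\vert = n/3 + o(n)$ by a direct averaging comparing codegrees within and across classes to those of $T_{A,B,C}$, and reassigning the $o(n)$ misclassified vertices to their correct parts changes only $o(n^3)$ triples, completing the bound.

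The main obstacle I foresee is the first step: selecting a list $\mathcal{Z}$ of zero flags that is simultaneously certifiable from the SDP solution and rich enough to pin down the $T_{A,B,C}$ blueprint inside $G'$. In particular $\mathcal{Z}$ must rule out alternative near-extremal configurations (for example blow-ups of small $F_{3,2}$-free $3$-graphs that are not balanced tripartite), while remaining small enough that each of its zero densities can be read off with zero slack from the numerical certificate of Section~\ref{flagsection}. This is essentially a reverse-engineering problem on the SDP output, and I would expect this part of the argument to require careful case analysis on the possible structures of links of pairs with codegree near $n/3$.
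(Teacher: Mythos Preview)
Your high-level outline matches the paper's strategy: extract zero-density configurations from the SDP certificate, apply a removal lemma, then recover the tripartition from the cleaned-up graph $G'$. The first two steps are essentially what the paper does, with one caveat: the relevant zero-density objects are the $6$-vertex $3$-graphs $G_i$ whose coefficients $\alpha_i$ in the flag-algebra identity are strictly negative (the non-sharp graphs), and these are counted as \emph{induced} subgraphs, so the paper invokes the Induced Removal Lemma (which may add as well as delete edges) rather than the ordinary one. After this step one knows that every $6$-set in $G'$ induces a subgraph of some $T_{A,B,C}$ plus at most one tripartite edge (Lemma~\ref{lm:S}).

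Where your plan diverges is in recovering the partition. Your auxiliary $2$-graph $\Phi$ based on codegree signatures is plausible in spirit but not obviously workable, and you correctly flag this as the obstacle. The paper's solution is different and more direct: it fixes a single \emph{typical} copy of $K_4^-$ on vertices $abcd$ in $G'$ (such copies exist in abundance since $e(G')>\pi(K_4^-)\binom{n}{3}$), and uses the SDP matrix $Q^{\tau_6}$ for the type $\tau_6=K_4^-$. The certificate shows $Q^{\tau_6}$ has corank $1$, with its unique zero eigenvector supported on exactly three $\tau_6$-flags. Since $\sum_{abcd}\mathbf{p}_{abcd}Q^{\tau_6}\mathbf{p}_{abcd}^T=O(n^5)$ while there are $\Omega(n^4)$ such quadruples, for a typical $abcd$ the vector $\mathbf{p}_{abcd}$ is nearly collinear with this eigenvector. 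This immediately partitions $V(G')$ into three sets $V_1,V_2,V_3$ of size $n/3+o(n)$ according to the link graph of each vertex in $\{a,b,c,d\}$. The remaining argument (Lemmas~\ref{cl:1}--\ref{cl:5}) then uses the local $6$-set constraint from the removal step together with this explicit partition to show $G'$ is $o(n^3)$-close to $T_{V_1,V_2,V_3}$. This eigenvector trick is the missing ingredient in your plan; it replaces the reverse-engineering and case analysis you anticipate with a single rank computation on the certificate plus the choice of one well-placed $K_4^-$.
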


We use Theorem~\ref{f32stability} in Section~\ref{exactsection} to prove our
result on the codegree threshold:
\begin{theorem}[Codegree threshold]\label{f32number}
For all $n$ sufficiently large,
\[\mathrm{coex}(n, \{F_{3,2}\})= \left\{ \begin{array}{ll}
\lfloor n/3 \rfloor-1 & \textrm{if $n$ is congruent to $1$ modulo $3$}\\
\lfloor n/3 \rfloor & \textrm{otherwise.}
\end{array} \right.\]
\end{theorem}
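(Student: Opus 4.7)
The plan is to convert the approximate structural information from Theorem~\ref{f32stability} into an exact determination of the codegree threshold and of the set of extremal configurations. The matching lower bound in each congruence class comes from exhibiting an $F_{3,2}$-free construction: when $n\equiv 1\pmod 3$ the balanced $T_{A,B,C}$ already attains $\lfloor n/3\rfloor - 1$, while when $n\equiv 0,2\pmod 3$ a controlled perturbation of the balanced $T_{A,B,C}$ (altering only a bounded number of edges near one or two distinguished vertices) boosts the minimum codegree by one while remaining $F_{3,2}$-free; this is verified by checking independence of all joint neighbourhoods. For the upper bound, suppose $G$ is an $F_{3,2}$-free $3$-graph on $n$ vertices that violates the claimed bound, i.e.\ $\delta_2(G)$ exceeds the value in the statement. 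Pick $\varepsilon>0$ small; for $n$ large enough Theorem~\ref{f32stability} provides a balanced tripartition $V(G) = A\sqcup B\sqcup C$ such that $G$ and $T_{A,B,C}$ lie at edit distance at most $\varepsilon\binom{n}{3}$.

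I would then refine this partition to eliminate \emph{atypical} vertices, meaning vertices whose links disagree substantially with the prescription of $T_{A,B,C}$. A standard averaging argument shows that at most $O(\sqrt{\varepsilon}\, n)$ vertices are atypical; reassigning each to whichever part best fits its link, and iterating boundedly many times, produces a new balanced tripartition in which every vertex is typical, so that each pair $\{x,y\}$ satisfies $\vert\Gamma_G(x,y)\triangle\Gamma_{T_{A,B,C}}(x,y)\vert = o(n)$. At this stage $G$ differs from $T_{A,B,C}$ only in a set of \emph{defect} edges and \emph{missing} edges that are sparsely and locally distributed.

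The crux of the argument is the ensuing local analysis. For each defect edge $xyz$ of $G$ I would, using the typicality of surrounding vertices, try to complete $xyz$ to a copy of $F_{3,2}$ by selecting two further vertices $u,v$ such that $xyu$, $xyv$ and $zuv$ are all edges of $G$; the typicality and the codegree lower bound make this completion available unless $xyz$ is accompanied by specific compensating deletions of $T_{A,B,C}$-edges. Each such deletion decreases some pair's codegree by $1$, so a bookkeeping of the net effect on $\delta_2(G)$ forces a dichotomy: when $n\equiv 1\pmod 3$ the unavoidable $(+1,0,0)$ part imbalance already caps $\delta_2(G)$ at $\lfloor n/3\rfloor-1$, matching $T_{A,B,C}$; when $n\not\equiv 1\pmod 3$ the only way to reach $\lfloor n/3\rfloor$ is via exactly the perturbations used for the lower bound, simultaneously pinning down the extremal family. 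The main obstacle is this final step: one has to enumerate the few possible local deviations from $T_{A,B,C}$ and either embed an $F_{3,2}$ using nearby typical edges or classify the deviation as part of an extremal configuration, which requires a careful and somewhat tedious case analysis of pair codegrees across and within the parts $A,B,C$.
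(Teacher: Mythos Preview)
Your lower-bound step contains a genuine error. You assert that when $n\equiv 0,2\pmod 3$ one can boost the minimum codegree of a balanced $T_{A,B,C}$ from $\lfloor n/3\rfloor-1$ to $\lfloor n/3\rfloor$ by ``altering only a bounded number of edges near one or two distinguished vertices''. This is impossible: in $T_{A,B,C}$ with $|A|=|B|=|C|=m$, \emph{every} one of the $3m^2$ cross-pairs (one vertex in each of two distinct parts) has codegree exactly $m-1$, so any construction achieving $\delta_2\ge m$ must add at least one new neighbour to each of these $\Theta(n^2)$ pairs. The paper's lower-bound constructions accordingly add a full \emph{tripartite pair matching} of size $m^2$ coming from a Latin square (a proper $m$-edge-colouring of $K_{m,m}$): for each $(a,b)\in A\times B$ one adds the tripartite edge $\{a,b,\phi(a,b)\}$. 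One checks (Proposition~\ref{matchingsaregood}) that this keeps all joint neighbourhoods independent. Without this Latin-square idea the lower bound simply does not go through.

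Your upper-bound sketch is in the right spirit but is too soft at the decisive point. The paper does not stop at ``every vertex is typical'' with $o(n)$ errors per pair; it proves the exact structural statement that, for the partition minimising $|E(G)\setminus E(T_{V_1,V_2,V_3})|$, there are \emph{no} bad edges except tripartite ones (Lemmas~\ref{nointernalbad} and~\ref{nobipartitebad}), obtained from the per-vertex bounds $d_B(x),d_M(x)=o(n^2)$ via short $F_{3,2}$-embedding arguments. This exact statement is what makes the endgame work: once every pair inside $V_i$ has its neighbourhood confined to $V_{i+1}$, the codegree condition forces part sizes $\ge m$ and hence $\delta_2(G)\le\lfloor n/3\rfloor$ immediately. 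The $n\equiv 1\pmod 3$ case is then dispatched not by a vague ``bookkeeping of the net effect on $\delta_2(G)$'' but by a pigeonhole count on tripartite edges: with part sizes $m,m,m+1$, each of the $m(m+1)$ pairs in $V_2\times V_3$ needs a tripartite edge, yet there are only $m^2$ pairs in $V_1\times V_2$ to absorb them, so some $V_1\times V_2$ pair is overused and one completes an $F_{3,2}$ (Proposition~\ref{thresholdn1mod3}). Your outline does not reach either of these two key ideas.
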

In addition we determine the set of extremal configurations. Since this set
depends on the congruence class of $n$ modulo $3$ and in one case has a slightly
technical description, we postpone the corresponding theorems to
Section~\ref{exactsection} (Theorems~\ref{n=0mod3config}, \ref{n=2mod3config},
\ref{n=1mod3configt2} and~\ref{n=1mod3configt1}).

We end the paper with a discussion of `mixed problems': given $c$: $0\leq c \leq
1/3$, what is the asymptotically maximal $3$-edge density $\rho_c$ in
$F_{3,2}$-free $3$-graphs with codegree density at least $c$? We make a
conjecture regarding the value of $\rho_c$.

\section{Codegree density via extensions}\label{edssection}
In this section, we prove that $\gamma(F_{3,2})=1/3$. Our strategy is similar in
spirit to the one espoused by de Caen and F\"uredi~\cite{deCaenFuredi00} in
their work on the Tur\'an density of the Fano plane: we show that if
$\delta_2(G)$ is large then $G$ contains a copy either of $F_{3,2}$ or of some
`nice subgraph' $H$. In the latter case we repeat the procedure using the extra
assumption that $H$ is a subgraph of $G$: we find again either a copy of
$F_{3,2}$ or a copy of an even `nicer' subgraph, $H'$, and so on.

Our approach is based on Lemma~\ref{lem:extcodegree}, proved in the next
subsection, which establishes the existence of `nice' extensions of a subgraph
in a $3$-graph with high codegree. In Section~\ref{condcodegreesection}, we
define conditional codegree density -- loosely speaking, the codegree density
subject to the constraint of containing a particular subgraph $H$. This concept
then allows us to apply Lemma~\ref{lem:extcodegree} in a very streamlined
fashion in the final subsection to prove Theorem~\ref{f32density}.

\subsection{Extensions}
We prove here a useful lemma, which tells us that if we have a small subgraph
$H$ inside a $3$-graph $G$ which has a high minimum codegree $\delta_2(G)$, then
we can extend $H$ to a slightly larger `nice' subgraph $H'$ of $G$.

We begin with some definitions.
\begin{definition}
Let $H$ be a $3$-graph. A \emph{(simple) extension} of $H$ is a $3$-graph $H'$
with $V(H')=V(H)\cup\{z\}$ for some $z \notin V(H)$ and $E(H') \supseteq E(H)$.
We denote by $L(H'; H)$ the \emph{link graph} of the new vertex $z$,
\[L(H';H)= \{xy \in V(H)^{(2)}: \ xyz \in E(H')\}.\]
\end{definition}
\begin{definition}
A sequence of $3$-graphs $(G_n)_{n \in \mathbb{N}}$ \emph{tends to infinity} if
$\vert V(G_n)\vert\rightarrow \infty$ as $n \rightarrow \infty$. Also, given a
$3$-graph $H$, we say that a sequence $(G_n)_{n \in \mathbb{N}}$ \emph{contains
$H$} if all but finitely many of the $3$-graphs $G_n$ contain $H$ as a subgraph.
\end{definition}

Given a set $S$, write $\Delta(S)$ for the $(\vert S\vert-1)$-dimensional
simplex 
\[\Bigl\{\underline{\alpha}\in{[0,1]}^{ S }:\sum_{s\in S}\alpha_s=1\Bigr\}.\] 
If $H$ is a $3$-graph and $\underline{\alpha}\in\Delta({V(H)}^{(2)})$, then
$\underline{\alpha}$ is a weighting on the pairs of vertices of $H$. We can now
state and prove our key lemma.

\begin{lemma}\label{lem:extcodegree}
Let $H$ be a $3$-graph. Suppose $(G_n)_{n \in \mathbb{N}}$ is a sequence of
$3$-graphs tending to infinity with
\[c=\liminf_{n\rightarrow\infty}\frac{\delta_2(G_n)}{\vert V(G_n) \vert}\,,\]
and that $(G_n)_{n \in \mathbb{N}}$ contains $H$.
Then for any $\underline{\alpha}\in\Delta({V(H)}^{(2)})$, there is a simple
extension $H'$ of $H$ with 
\[\sum_{xy\in L(H';H)}\alpha_{xy}\geq c\,\] and a subsequence
$(G_{n_k})_{k \in \mathbb{N}}$ of $(G_n)_{n \in \mathbb{N}}$ such that
$(G_{n_k})_{k \in \mathbb{N}}$ contains $H'$.
\end{lemma}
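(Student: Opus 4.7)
The plan is a double-counting argument followed by a pigeonhole over the (finitely many) isomorphism types of simple extensions of $H$.

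First, restrict to the subsequence of indices $n$ for which $G_n$ contains $H$, which is cofinite by hypothesis; further pass to a subsequence (still denoted $(G_n)$) along which $\delta_2(G_n)/|V(G_n)|\to c$. For each such $n$, fix an embedding of $H$ into $G_n$, and let $W_n=V(G_n)\setminus V(H)$. For each $z\in W_n$ consider the weight
\[
w_n(z)\;=\;\sum_{xy\in V(H)^{(2)}:\,xyz\in E(G_n)}\alpha_{xy}.
\]
Summing over $z\in W_n$ and switching the order of summation gives
\[
\sum_{z\in W_n}w_n(z)\;=\;\sum_{xy\in V(H)^{(2)}}\alpha_{xy}\,\bigl|\{z\in W_n:xyz\in E(G_n)\}\bigr|\;\geq\;\sum_{xy}\alpha_{xy}\bigl(\delta_2(G_n)-(|V(H)|-2)\bigr),
\]
since each pair $xy\subseteq V(H)$ has codegree at least $\delta_2(G_n)$ in $G_n$ and at most $|V(H)|-2$ of its codegree neighbours can lie in $V(H)\setminus\{x,y\}$. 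Using $\sum_{xy}\alpha_{xy}=1$, an averaging argument produces a vertex $z_n\in W_n$ with
\[
w_n(z_n)\;\geq\;\frac{\delta_2(G_n)-|V(H)|+2}{|V(G_n)|-|V(H)|}.
\]

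Each $z_n$ determines a simple extension $H'_n$ of $H$ on vertex set $V(H)\cup\{z_n\}$ (identify $z_n$ with a fixed new symbol $z$). Since $H$ is fixed, there are only finitely many possible link graphs $L(H'_n;H)\subseteq V(H)^{(2)}$, hence only finitely many possible $H'_n$. By the pigeonhole principle we may pass to a further subsequence along which all $H'_n$ are a single fixed extension $H'$. For this $H'$, the quantity $w:=\sum_{xy\in L(H';H)}\alpha_{xy}$ is independent of $n$, while the displayed lower bound tends to $c$ along our subsequence; therefore $w\geq c$. By construction, $H'$ is a simple extension of $H$ and $(G_{n_k})$ contains $H'$, which is exactly the statement of the lemma.

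The only moving part that needs care is the interaction between two limit operations (passing to the $\liminf$-subsequence and then pigeonholing to stabilise the isomorphism type of the extension); the calculation itself is an elementary averaging bounded by $\delta_2(G_n)$ with an $O(|V(H)|)$ error that is absorbed in the limit. No extremal structure of $G_n$ is used, only the codegree lower bound, which is what makes the lemma a convenient workhorse for the extension strategy described in the section.
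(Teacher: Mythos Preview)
Your proof is correct and follows essentially the same approach as the paper: an averaging argument over codegree contributions to find a good extension vertex, followed by pigeonholing over the finitely many isomorphism types of simple extensions. Your presentation is in fact a bit cleaner, since by passing first to a subsequence along which $\delta_2(G_n)/|V(G_n)|\to c$ and then taking a limit you avoid the paper's intermediate $\varepsilon$-step.
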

\begin{proof}
Let $(G_n)=(G_n)_{n \in \mathbb{N}}$ be a $3$-graph sequence tending to infinity
with \[c=\liminf_{n\rightarrow\infty}\frac{\delta_2(G_n)}{\vert
V(G_n)\vert}\,.\] Suppose $H$ is a $3$-graph contained in $(G_n)$ and let
$\underline{\alpha}\in\Delta({V(H)}^{(2)})$.

We claim that for every $\varepsilon>0$ there exists an extension $H'$ of $H$
such that $H'$ is contained as a subgraph in infinitely many of the $3$-graphs
$G_n$ and the weaker condition \[\sum_{xy\in L(H';H)}\alpha_{xy}\geq
c-2\varepsilon\] holds. This is sufficient to prove the lemma as there are up to
isomorphism only finitely many possible simple extensions of $H$, and so one of
them must satisfy the weaker condition for all $\varepsilon>0$.

Fix $0<\varepsilon<1$ and choose $N \in \mathbb{N}$ sufficiently large such that
for $n \geq N$ all of the following hold:
\begin{enumerate}[(i)]
\item $\delta_2(G_n)/\vert V(G_n)\vert\geq c-\varepsilon$,
\item $\vert V(G_n)\vert \geq \vert V(H)\vert/\varepsilon$, and
\item $H$ is a subgraph of $G_n$.
\end{enumerate}
Consider a $3$-graph $G_n$ from our sequence with $n \geq N$. Fix a copy of $H$
within $G_n$ (we know by (iii) above that such a copy exists), and consider the
weighted sum \[s=\sum_{xy\in {V(H)}^{(2)}}\alpha_{xy}\vert\Gamma(x,y)\vert\,.\]
We have $s\geq (c-\varepsilon)\vert V(G_n) \vert$ by (i) above. Also,
\begin{align*}
s&=\sum_{z\in V(G_n)}\,\,\sum_{xy\in {V(H)}^{(2)}:\ xyz\in E(G_n)}\alpha_{xy} \\
&\leq\left({\sum_{z\in V(G_n)\backslash V(H)}\,\,\sum_{xy\in {V(H)}^{(2)}:\
xyz\in E(G_n)}\alpha_{xy}}\right)+\vert V(H) \vert \,.
\end{align*}
Hence by averaging there exists a vertex $z\notin V(H)$  such that 
\begin{align*}
\sum_{xy\in {V(H)}^{(2)}:\  xyz\in E(G^n)}\alpha_{xy}& \geq \frac{\vert V(G_n)
\vert}{\vert V(G_n) \setminus V(H) \vert } (c-\varepsilon)- \frac{\vert V(H)
\vert}{\vert V(G_n) \setminus V(H) \vert }\\
&\geq \frac{\vert{V(G_n)} \vert}{\vert V(G_n) \setminus V(H) \vert }
(c-2\varepsilon)&&\textrm{\hspace{-18mm}by (ii) above}\\
& > c- 2\varepsilon \,.
\end{align*}

Therefore the simple extension $H'$ of $H$ with vertex set $V(H)\cup \{z\}$ and
$3$-edges $E(H)\cup\{xyz:\ xy\in {V(H)}^{(2)},\, xyz\in E(G_n)\}$ satisfies our
weaker condition and is a subgraph of $G_n$. Since there are up to isomorphism
only finitely many extensions of $H$, one of them must satisfy the weaker
condition and be contained in infinitely many of the $3$-graphs in our sequence
$(G_n)_{n \in \mathbb{N}}$. This concludes the proof of our claim and with it
the proof of the lemma.
\end{proof}
We shall sometimes write $w_{\underline{\alpha}}(L(H';H))$, or simply $w(L)$,
for $\sum_{xy\in L(H';H)}\alpha_{xy}$. This quantity $w(L)$ is exactly the
total weight of the pairs picked up by the new vertex in the extension, with
respect to the weighting $\underline{\alpha}$.

\subsection{Conditional codegree density}\label{condcodegreesection}
Our arguments in the proof of Theorem~\ref{f32density} are of the form ``if $G$
contains $H$ and $\delta_2(G)$ is large then $G$ must contain a copy of a member
of $\mathcal{F}$''. It is thus natural to make the following definition.
\begin{definition}
Let $H$ be a $3$-graph, and let $\mathcal{F}$ be a family of nonempty
$3$-graphs. The \emph{conditional codegree threshold} of $\mathcal{F}$ given
$H$, denoted by $\mathrm{coex}(n, \mathcal{F} \vert H)$, is the maximum of
$\delta_2(G)$ over all $n$-vertex, $\mathcal{F}$-free $3$-graphs $G$ which
contain a copy of $H$ as a subgraph.
\end{definition}

Our aim in this subsection is to show that we can define a conditional codegree
density from this, in other words that the sequence $\mathrm{coex}(n,\mathcal{F}
\vert H)/n$ tends to a limit as $n \rightarrow \infty$. This will be very
similar to the proof that the usual codegree density is
well-defined~\cite{MubayiZhao07}.

\begin{lemma}\label{claim:takesmallerconditional}
Let $H$ be a $3$-graph and let $\varepsilon>0$. Then there exists an integer
$N=N(\varepsilon, H)$ such that for all $n, n' \in \mathbb{N}$ with $N \leq n'
\leq n$, every $3$-graph $G$ on $n$ vertices containing a copy of $H$ has a
subgraph $G'$ on $n'$ vertices also containing a copy of $H$ and satisfying
\[ \frac{\delta_2(G')}{n'}> \frac{\delta_2(G)}{n} - \varepsilon\, .\] 
\end{lemma}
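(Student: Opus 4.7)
The plan closely follows the standard argument that the (unconditional) codegree density is well-defined~\cite{MubayiZhao07}, with a small twist to preserve the copy of~$H$. Fix a copy of $H$ inside $G$ with vertex set $V_H$, and write $h=|V(H)|$. Let $W$ be a uniformly random $(n'-h)$-subset of $V(G)\setminus V_H$ and set $V'=V_H\cup W$, $G'=G[V']$; by construction $|V(G')|=n'$ and $G'\supseteq H$.

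For any pair $\{x,y\}\subseteq V'$ we need a lower bound on
\[
d_{G'}(x,y)=|\Gamma(x,y)\cap(V_H\setminus\{x,y\})|+|\Gamma(x,y)\cap(W\setminus\{x,y\})|,
\]
where $\Gamma=\Gamma_G$. A short case analysis on how many of $x,y$ lie in $V_H$ shows that, conditional on $\{x,y\}\subseteq V'$, the second summand is hypergeometrically distributed, with conditional expectation at least
\[
(\delta_2(G)-h)\cdot\frac{n'-h-2}{n-h-2}\ \geq\ \frac{\delta_2(G)}{n}\,n'-Ch
\]
for some absolute constant $C$ (using the trivial bound $\delta_2(G)\leq n$). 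Hoeffding's inequality for hypergeometric random variables then gives
\[
\Pr\!\left[d_{G'}(x,y)<\frac{\delta_2(G)}{n}\,n'-Ch-\tfrac{\varepsilon}{4}n'\right]\ \leq\ e^{-c\varepsilon^{2}n'}
\]
for some absolute constant $c>0$.

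Choosing $N=N(\varepsilon,H)$ large enough that (i)~$(n')^{2}e^{-c\varepsilon^{2}n'}<1$, so that a union bound over the fewer than $(n')^{2}$ pairs in $V'$ beats the concentration failure probability, and (ii)~$Ch/n'<\varepsilon/4$, we conclude that with positive probability every pair in $V'$ satisfies $d_{G'}(x,y)\geq(\delta_2(G)/n)n'-(\varepsilon/2)n'$. Thus some realisation of $G'$ satisfies $\delta_2(G')/n'>\delta_2(G)/n-\varepsilon$, and this $G'$ contains $H$ by construction, giving the lemma.

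The only substantive obstacle is keeping the conditioning clean: one must verify that after conditioning on which of $x,y$ fall inside $W$, the remainder of $W$ is still uniformly distributed over appropriately-sized subsets of $V(G)\setminus V_H\setminus\{x,y\}$, so that standard hypergeometric concentration applies. Once this is checked, the additive $O(h)$ error terms are absorbed by the choice of~$N$, and the rest is routine bookkeeping.
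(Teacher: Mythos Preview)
Your proposal is correct and follows essentially the same route as the paper: fix a copy of $H$, extend it by a uniformly random $(n'-h)$-set, bound each codegree from below via a hypergeometric random variable, apply a Chernoff/Hoeffding-type tail bound, and finish with a union bound over pairs. The only cosmetic difference is that the paper handles the conditioning by taking a \emph{random enumeration} $P_1,\dots,P_{\binom{n'}{2}}$ of the pairs in $V(G')$ and conditioning on $P_i=xy$, which lets it treat all pairs uniformly without your explicit case split on $|\{x,y\}\cap V_H|$; both devices yield the same hypergeometric comparison and the same $O(h)$ additive loss absorbed into the choice of~$N$.
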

(This is just saying that $G'$ has `codegree density' almost as large as $G$.)

\begin{proof}
Let $H$ be a $3$-graph on $h$ vertices, and let $\varepsilon>0$. Suppose $G$ is
a $3$-graph on $n$ vertices containing a copy of $H$. We form an $n'$-vertex
subgraph of $G$ by fixing a copy of $H$ in $G$ and extending it by adding $n'-h$
vertices selected uniformly at random from the rest of $G$. Let $G'$ denote the
resulting (random) induced subgraph of $G$. Clearly $G'$ contains a copy of $H$
and has the right order. Now let us show that -- provided $n$ and $n'$ are
sufficiently large -- $G'$ also has a good chance of having a reasonably high
minimal codegree.

Let $P_1, P_2, \ldots, P_{\binom{n'}{2}}$ be a random enumeration of the pairs of vertices from $V(G')$. Note that conditional on $P_i=xy$, the set $V(G')\setminus \left(P_i\cup V(H)\right)$ is distributed as a uniformly chosen random subset of $V(G) \setminus \left(P_i\cup V(H)\right)$ of size $n'-\vert V(H)\cup P_i \vert\geq n'-h-2$.

For each $i: \ 1 \leq i \leq \binom{n'}{2}$ and $t\in \mathbb{N}$, we have
\begin{align*}
\mathbb{P}(d_{G'}(P_i)\leq t)&\leq \sum_{xy \in V(G)^{(2)}} \mathbb{P}(P_i=xy)\mathbb{P}\Bigl(\left\vert\left(V(G')\cap \Gamma(x,y)\right)\setminus \left(P_i\cup V(H)\right)\right\vert \leq t \Big\vert P_i=xy\Bigr)\\
&\leq \mathbb{P}(X\leq t),
\end{align*}
where $X$ is the hypergeometric random variable
 \[X\sim\textrm{Hypergeometric}\left(n'-2-h,\delta_2(G)-h, n-h\right)\,.\]
 (Recall that the $\mathrm{Hypergeometric}(s, t, N)$ distribution with parameters $s,t\leq N$ is obtained as follows: fix a $t$-subset $A$ of an $N$-set. Then pick an $s$-set $B$ from the same $N$-set uniformly at random; the $\mathrm{Hypergeometric}(s, t, N)$ distribution is the distribution of the number of elements of $A$ included in $B$.)

Now, provided $n,n'$ are both sufficiently large,
\[ \mathbb{E}(X)\geq \frac{n'}{n}\delta_2(G)- \frac{\varepsilon}{2} n'\, .\]

We can now use a standard Chernoff-type bound for the hypergeometric distribution
(see for example Lemma 2 in~\cite{HaggkvistThomason95}) to show that the probability that $P_i$ is a
low codegree pair in $G'$ is small.
\begin{align*}
\mathbb{P}\left(d_{G'}(P_i)\leq \frac{n'}{n}\delta_2(G)-\varepsilon
n'\right)&\leq\mathbb{P}\left(X\leq \mathbb{E}(X)-\frac{\varepsilon
n'}{2}\right)\\
&\leq\exp\left(\frac{-{\left(\varepsilon n'/2\right)}^2}{\mathbb{E}(X)/2}\right)\\
&\leq\exp\left(\frac{-\varepsilon^2 n'}{2}\right)\,.
\end{align*}
Summing over all $\binom{n'}{2}$ pairs $P_i$ from $V(G')$ and using the union bound, we deduce
that
\begin{align*}
\mathbb{P}\left({\delta_2(G')\leq \frac{n'}{n}\delta_2(G)-\epsilon
n'}\right)&\leq\binom{n'}{2}\exp\left(\frac{-\varepsilon^2
n'}{2}\right).
\end{align*}
For $n'$ sufficiently large, this is strictly less than $1$. Thus with
strictly positive probability $G'$ satisfies $\delta_2(G')/n'>\delta_2(G)/n-
\varepsilon$ as required -- and in particular a good choice of $G'$ exists.
\end{proof}

With Lemma~\ref{claim:takesmallerconditional} in hand, we can now prove the main
result of this section.
\begin{proposition}\label{prop:limconditionalexists}
For all $3$-graphs $H$ and all families of nonempty $3$-graphs $\mathcal{F}$
not containing $H$, the sequence $\mathrm{coex}(n, \mathcal{F}\vert H)/n$ tends
to a limit as $n \rightarrow \infty$.
\end{proposition}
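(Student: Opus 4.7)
The plan is to mimic the standard Mubayi–Zhao argument for the existence of the ordinary codegree density, using Lemma~\ref{claim:takesmallerconditional} as the key technical ingredient. Since $\mathrm{coex}(n,\mathcal{F}\vert H)/n \leq 1$, the sequence is bounded, so it is enough to show that its $\liminf$ is at least its $\limsup$. Set $L = \limsup_{n \to \infty} \mathrm{coex}(n,\mathcal{F}\vert H)/n$; I will show that $\liminf_{n'\to\infty}\mathrm{coex}(n',\mathcal{F}\vert H)/n' \geq L - \varepsilon$ for every $\varepsilon>0$.

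Fix $\varepsilon>0$ and let $N=N(\varepsilon/2,H)$ be the integer given by Lemma~\ref{claim:takesmallerconditional}. Given any $n'\geq N$, I would use the definition of $\limsup$ to choose some $n\geq n'$ (as large as I please) with
\[
\frac{\mathrm{coex}(n,\mathcal{F}\vert H)}{n}>L-\frac{\varepsilon}{2}.
\]
Take a witnessing $\mathcal{F}$-free $n$-vertex $3$-graph $G$ that contains $H$ and satisfies $\delta_2(G)=\mathrm{coex}(n,\mathcal{F}\vert H)$. Apply Lemma~\ref{claim:takesmallerconditional} with parameter $\varepsilon/2$ and orders $n'\leq n$ to extract an $n'$-vertex subgraph $G'$ of $G$ that still contains $H$ and satisfies
\[
\frac{\delta_2(G')}{n'}>\frac{\delta_2(G)}{n}-\frac{\varepsilon}{2}>L-\varepsilon.
\]
Because $G'$ is a subgraph of $G$ and $\mathcal{F}$-freeness is hereditary under taking subgraphs, $G'$ is $\mathcal{F}$-free. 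Therefore $G'$ witnesses $\mathrm{coex}(n',\mathcal{F}\vert H)/n'>L-\varepsilon$.

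Since this holds for all $n'\geq N$, we conclude $\liminf_{n'\to\infty}\mathrm{coex}(n',\mathcal{F}\vert H)/n'\geq L-\varepsilon$, and letting $\varepsilon\to 0$ gives $\liminf = \limsup = L$, so the limit exists. The only real content is Lemma~\ref{claim:takesmallerconditional}, which is already in hand; the rest is purely formal manipulation of $\liminf$/$\limsup$, so I do not anticipate any genuine obstacle — the one point to check is simply that passing to an induced subgraph preserves both $\mathcal{F}$-freeness and the presence of the fixed copy of $H$, which is built into the conclusion of that lemma.
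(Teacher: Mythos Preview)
Your proof is correct and uses the same key ingredient as the paper, namely Lemma~\ref{claim:takesmallerconditional}. The paper packages the conclusion slightly differently, showing the sequence is Cauchy (first deriving $a_n-a_{n'}\le\varepsilon$ for $n\ge n'\ge N$, then iterating to find a single reference index $M$), whereas you directly compare $\liminf$ with $\limsup$; your route is if anything a bit more streamlined, but the two arguments are essentially the same.
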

\begin{proof}
Let $H$ be a $3$-graph and let $\mathcal{F}$ be a family of nonempty $3$-graphs
which does not contain $H$. Set
\[ a_n = \frac{\mathrm{coex}(n, \mathcal{F}\vert H)}{n}\,.\] 
We shall show $(a_n)_{n \in \mathbb{N}}$ is a Cauchy sequence and hence
convergent in $[0,1]$.

Pick $\varepsilon>0$, and let $N=N(\varepsilon, H)$ be the integer whose
existence is guaranteed by Lemma~\ref{claim:takesmallerconditional}. Let $n,n'
\in \mathbb{N}$ be integers with $n \geq n' \geq N$. Suppose $G$ is an
$n$-vertex $\mathcal{F}$-free $3$-graph containing a copy of $H$ with
$\delta_2(G)= \mathrm{coex}(n, \mathcal{F} \vert H)$. By
Lemma~\ref{claim:takesmallerconditional}, $G$ has an $n'$-vertex subgraph $G'$
which contains a copy of $H$ and satisfies $\delta_2(G')/n' \geq \delta_2(G)/n -
\varepsilon$. Since $G$ is $\mathcal{F}$-free, so is $G'$, and we must thus have
 $$
a_n- a_{n'}  \leq a_n - \frac{\delta_2(G')}{n'}\leq a_n -
\frac{\delta_2(G)}{n}+ \varepsilon
= \varepsilon.
 $$

We claim that there also exists an integer $M= M(\varepsilon, H)\geq N$ such that  for all integers $n\geq M$ we have $a_{M} -a_n \leq \varepsilon$. Indeed, either $M_1=N$ is a good choice of $M$ or there exists an integer $M_2>N$ with $a_{M_2}< a_N - \varepsilon$. Then either $M_2$ is a good choice of $M$ or there exists an integer $M_3>M_2$ with $a_{M_3}<a_{M_2}-\varepsilon$, in which case we iterate the argument. As the sequence $a_{M_1}, a_{M_2}, \ldots$ consists of real numbers from $[0,1]$, is strictly decreasing and has gaps between successive terms of at least $\varepsilon$, it can have length at most $1+\lceil 1/\varepsilon \rceil$. Thus after a bounded number of iterations of our argument, we find a good choice of $M$.

Then for any $n \geq M$, we have $\vert a_n -a_M \vert \leq \varepsilon$. It follows that $(a_n)_{n \in \mathbb{N}}$ is Cauchy as claimed, and so converges to a limit in $[0,1]$.
\end{proof}
We may thus define the conditional codegree density of $\mathcal{F}$ given $H$.
\begin{definition}
Let $\mathcal{F}$ be a family of nonempty $3$-graphs, and let $H$ be a $3$-graph not belonging to $\mathcal{F}$. The \emph{conditional codegree density} $\gamma(\mathcal{F}\vert H)$ of $\mathcal{F}$ given $H$ is the limit
\[\gamma(\mathcal{F} \vert H)= \lim_{n \rightarrow \infty} \frac{\textrm{coex}(n, \mathcal{F} \vert H)}{n}.\]
\end{definition}
The following simple observation encapsulates the usefulness of conditional
codegree densities in bounding codegree densities.
\begin{lemma}\label{lem:notationlink}
Let $\mathcal{F}$ be a family of nonempty $3$-graphs and let $H$ be a $3$-graph
not contained in $\mathcal{F}$. Then
\[\gamma(\mathcal{F})=\max\{\gamma(\mathcal{F} \vert
H),\gamma(\mathcal{F}\cup\{H\})\}\,.\]
\end{lemma}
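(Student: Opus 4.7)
The plan is to deduce the lemma from an exact identity at each finite $n$ and then pass to the limit. First, for every $n \in \mathbb{N}$ I would show
$$\mathrm{coex}(n, \mathcal{F}) = \max\{\mathrm{coex}(n, \mathcal{F}\vert H),\ \mathrm{coex}(n, \mathcal{F}\cup\{H\})\}.$$
The ``$\geq$'' direction is immediate: any $n$-vertex $\mathcal{F}$-free $3$-graph that happens to contain $H$ is $\mathcal{F}$-free, and any $(\mathcal{F}\cup\{H\})$-free $3$-graph is also $\mathcal{F}$-free, so both conditional quantities provide lower bounds for the left-hand side. The ``$\leq$'' direction comes from a clean dichotomy: let $G$ be an $n$-vertex $\mathcal{F}$-free $3$-graph realising $\delta_2(G) = \mathrm{coex}(n, \mathcal{F})$. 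Either $G$ contains $H$ as a subgraph, in which case by definition $\delta_2(G) \leq \mathrm{coex}(n, \mathcal{F} \vert H)$, or $G$ is $H$-free, and hence $(\mathcal{F}\cup\{H\})$-free, so $\delta_2(G) \leq \mathrm{coex}(n, \mathcal{F}\cup\{H\})$. In either case $\delta_2(G)$ is bounded by one of the two quantities on the right.

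Next I would divide through by $n$ and let $n \rightarrow \infty$. All three relevant limits exist: $\gamma(\mathcal{F})$ and $\gamma(\mathcal{F}\cup\{H\})$ by the Mubayi--Zhao result cited in Section~\ref{codegreesection}, and $\gamma(\mathcal{F}\vert H)$ by Proposition~\ref{prop:limconditionalexists}. Since the maximum of two convergent real sequences converges to the maximum of the limits, the finite-$n$ identity passes to the limit, yielding the claimed equality.

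The argument has essentially no obstacle: it is a bookkeeping step that simply records the fact that codegree density splits cleanly under the dichotomy ``$G$ contains $H$ or $G$ does not''. The only point requiring even a moment's verification is that each of the three densities involved is indeed well-defined, which is why the proof of Proposition~\ref{prop:limconditionalexists} had to be established first.
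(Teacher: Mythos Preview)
Your proof is correct and follows essentially the same dichotomy as the paper: an $\mathcal{F}$-free $3$-graph either contains $H$ or it does not, and this splits the extremal problem cleanly into the two pieces on the right-hand side. The only cosmetic difference is that you establish the identity exactly at each finite $n$ and then pass to the limit, whereas the paper argues directly at the density level via a short contradiction; both amount to the same observation.
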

\begin{proof}
Let $c=\max\{\gamma(\mathcal{F}\vert H),\gamma(\mathcal{F}\cup\{H\})\}$. Clearly
we have that $\gamma(\mathcal{F})\geq\gamma(\mathcal{F}\vert H)$ and
$\gamma(\mathcal{F})\geq\gamma(\mathcal{F}\cup\{H\})$, so
$\gamma(\mathcal{F})\geq c$.

Suppose that $(G_n)_{n\in \mathbb{N}}$ is a sequence of $3$-graphs tending
to infinity with $\liminf_{n\rightarrow\infty}\frac{\delta_2(G_n)}{\vert
V(G_n)\vert}> c$. Let $n$ be sufficiently large. Then, since
$\gamma(\mathcal{F}\cup\{H\})\leq c$, $G_n$ must
contain a member of $\mathcal{F}$ or $H$. As $\gamma(\mathcal{F}\vert
H)\leq c$, if $G_n$ contains $H$ then it must contain a member of $\mathcal{F}$
also. In particular, $G_n$ contains a member of
$\mathcal{F}$. It follows that $\gamma(\mathcal{F})\leq c$, as claimed.
\end{proof}

\subsection{Proof of Theorem~\ref{f32density}}
For an integer $t$, the \emph{blow-up} $F(t)$ of a $3$-graph $F$ is the
$3$-graph formed by replacing each vertex $v$ of $F$ by a set $S_v$ of $t$ new
vertices and placing for each $3$-edge $\{x,y,z\}\in E(F)$ all $t^3$ triples
meeting each of $S_x$, $S_y$ and $S_z$ in one vertex. If $\mathcal{F}$ is a family of $3$-graphs then its \emph{blow-up}
$\mathcal{F}(t)$ is defined to be the family $\{F(t):F\in\mathcal{F}\}$.

Just as the ordinary Tur\'an density, the codegree density $\gamma$ exhibits 
\emph{blow-up invariance}: the codegree density of a finite family
is the same as the codegree density of its blow-up. This fact was reproved by
several researchers, see e.g.~\cite{KeevashZhao07, LoMarkstrom12, MubayiZhao07} 
\begin{lemma}[\cite{KeevashZhao07, LoMarkstrom12,
MubayiZhao07}]\label{lem:blowupcodegree}
Let $\mathcal{F}$ be a finite family of $3$-graphs and $t\in \mathbb{N}$. Then
\[\gamma(\mathcal{F}(t))=\gamma(\mathcal{F}).\]
\end{lemma}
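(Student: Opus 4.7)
The plan is to prove the two inequalities separately. The direction $\gamma(\mathcal{F}(t)) \geq \gamma(\mathcal{F})$ is immediate: every $F \in \mathcal{F}$ embeds into $F(t)$ (select one vertex from each blown-up class), so any $\mathcal{F}$-free $3$-graph is automatically $\mathcal{F}(t)$-free. Thus $\mathrm{coex}(n, \mathcal{F}(t)) \geq \mathrm{coex}(n, \mathcal{F})$ for every $n$, and dividing by $n$ and taking the limit gives the corresponding inequality between codegree densities.

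For the reverse direction $\gamma(\mathcal{F}(t)) \leq \gamma(\mathcal{F})$, fix $\varepsilon > 0$ and an $n$-vertex $3$-graph $G$ with $\delta_2(G) \geq (\gamma(\mathcal{F}) + \varepsilon) n$ for $n$ large; the aim is to locate a copy of $F(t)$ for some $F \in \mathcal{F}$. By the definition of $\gamma(\mathcal{F})$, I would first fix an integer $m = m(\varepsilon)$ such that every $m$-vertex $3$-graph with minimum codegree at least $(\gamma(\mathcal{F}) + \varepsilon/2) m$ contains a member of $\mathcal{F}$. Next I would apply a random-sampling argument of exactly the same flavour as in the proof of Lemma~\ref{claim:takesmallerconditional}: for a uniformly random $m$-subset $S \subseteq V(G)$ and each pair $xy \in \binom{S}{2}$, the codegree $d_{G[S]}(x,y) = \vert \Gamma_G(x,y) \cap S \vert$ is hypergeometrically distributed with mean at least $(m/n)\delta_2(G) \geq (\gamma(\mathcal{F}) + \varepsilon)(m - O(1))$, so a Chernoff-type tail bound combined with a union bound over the $\binom{m}{2}$ pairs in $S$ yields $\delta_2(G[S]) \geq (\gamma(\mathcal{F}) + \varepsilon/2) m$ with probability $1 - o(1)$ as $n \to \infty$. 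Consequently a positive proportion of the $\binom{n}{m}$ subsets $S$ contain a copy of some $F \in \mathcal{F}$; counting each such copy inside at most $\binom{n - v}{m - v}$ of these subsets (where $v = \max_{F \in \mathcal{F}} \vert V(F) \vert$) and then pigeonholing over the finite family $\mathcal{F}$ produces a single $F \in \mathcal{F}$ with at least $c \cdot n^{\vert V(F) \vert}$ copies in $G$, for some $c = c(\varepsilon, \mathcal{F}) > 0$.

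To finish, I would invoke the hypergraph Erd\H{o}s blow-up lemma: any $3$-graph on $n$ vertices that contains $\Omega(n^{\vert V(F) \vert})$ copies of a fixed $F$ must contain the blow-up $F(t)$ once $n$ is large enough in terms of $t$, $F$, and the implied constant. This produces a copy of $F(t) \in \mathcal{F}(t)$ inside $G$ and completes the proof. The main obstacle lies in executing the supersaturation step cleanly: the Chernoff-type deviation bound for the hypergeometric distribution must give a failure probability that is exponentially small per pair, so as to absorb the union bound over all pairs in $S$. Since this is precisely the estimate already carried out in the proof of Lemma~\ref{claim:takesmallerconditional}, no genuinely new tool is required beyond the classical Erd\H{o}s blow-up lemma for hypergraphs.
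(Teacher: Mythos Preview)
The paper does not actually prove this lemma: it is stated with a citation to \cite{KeevashZhao07, LoMarkstrom12, MubayiZhao07} and closed with a bare \qed. Your argument is the standard one found in those references (supersaturation via random sampling, followed by the Erd\H{o}s hypergraph blow-up lemma), and it is essentially correct.

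One small inaccuracy: with $m$ fixed and $n \to \infty$, the probability that a random $m$-set $S$ satisfies $\delta_2(G[S]) \geq (\gamma(\mathcal{F}) + \varepsilon/2)m$ does not tend to $1$; it converges to a fixed constant (the hypergeometric distributions converge to binomials). What you actually get from the Chernoff bound is that this probability is at least, say, $1/2$ once $m$ is chosen large enough in terms of $\varepsilon$ (so that $\binom{m}{2} e^{-c\varepsilon^2 m} < 1/2$). That is all you need, since the subsequent double-counting only requires a positive proportion of $m$-subsets to contain a copy of some $F \in \mathcal{F}$. So the conclusion stands, but you should rephrase ``with probability $1-o(1)$ as $n\to\infty$'' as ``with probability at least $1/2$ for $m$ large enough depending on $\varepsilon$''.
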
\qed

Having stated this lemma, let us now define some $3$-graphs we shall need in our
proof of Theorem~\ref{f32density}. Recall from the introduction that $K_4$ is
the complete $3$-graph on four vertices, and $K_4^-$ is the $3$-graph obtained from
$K_4$ by deleting one of its $3$-edges. Further, let $S_k$ denote the
\emph{star} on $k+1$ vertices, that is, the $3$-graph with vertex set
$\{x,y_1,\ldots,y_k\}$ and $3$-edges $\{xy_iy_j:1\leq i<j\leq k\}$. Note that
$S_3$ is (isomorphic to) $K_4^-$.

Finally, let $S_k'$ denote the $3$-graph on $k+2$ vertices obtained by
duplicating the central vertex $x$ of the star $S_k$. Thus $S_k'$ has vertex set
$\{x_1,x_2,y_1,\ldots,y_k\}$ and $3$-edges $\{x_1y_iy_j:1\leq i<j\leq
k\}\cup\{x_2y_iy_j:1\leq i<j\leq k\}$.

Our strategy in the proof of Theorem~\ref{f32density} is to show that if a
$3$-graph $G$ has codegree $\delta_2(G)>
\left(\frac{1}{3}+\varepsilon\right)\vert V(G) \vert$ and $\vert V(G) \vert$ is
large, then $G$ contains a copy of $F_{3,2}$ or it is forced to contain
copies of larger and larger stars. We make this gradual ascension towards
Theorem~\ref{f32density} in a series of lemmas on conditional codegree density,
each of which relies on applying the key Lemma~\ref{lem:extcodegree} with a
suitable weighting $\underline{\alpha}$. We shall repeatedly look for and find copies of $F_{3,2}$ inside
larger $3$-graphs, and it will be convenient to write ``$ab \vert cde$'' to mean
that $abc, abd, abe$ and $cde$ are all $3$-edges (and thus that $\{abcde\}$
spans a copy of $F_{3,2}$).

\begin{lemma}\label{lem:s3'}
$\gamma(F_{3,2},S_3')\leq\frac{1}{3}$.
\end{lemma}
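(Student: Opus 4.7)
We argue by contradiction: suppose $(G_n)_{n \in \mathbb{N}}$ is a sequence of $\{F_{3,2}, S_3'\}$-free 3-graphs tending to infinity with $c := \liminf_{n \to \infty} \delta_2(G_n)/|V(G_n)| > 1/3$. The plan is to apply Lemma~\ref{lem:extcodegree} iteratively, at each stage either forcing a copy of $S_3'$ or $F_{3,2}$ (giving an immediate contradiction) or accumulating structural constraints. In the first application, starting from an edge $abc$ (present since $c > 0$) with $\underline{\alpha}$ uniform on the three pairs, the condition $w(L) \geq c > 1/3$ forces the new vertex to join at least two of the pairs, yielding $K_4^-$; label its center $x$ and leaves $y_1, y_2, y_3$. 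A second application with $\underline{\alpha}$ uniform on the leaf pairs $\{y_iy_j\}$ produces an extension vertex $z$ with at least two leaf pairs in its link; if all three were there then $\{x, z, y_1, y_2, y_3\}$ would span $S_3'$ (with centers $\{x, z\}$ and leaves $\{y_1, y_2, y_3\}$), so exactly two leaf pairs lie in $L(z)$. WLOG $\{y_1y_2, y_1y_3\} \subseteq L(z)$ and $zy_2y_3 \notin E$.

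I would next use $F_{3,2}$-freeness to derive disjointness. Since $zy_1y_2, zy_1y_3 \in E$, both $y_2, y_3$ lie in $\Gamma(y_1, z)$; by $F_{3,2}$-freeness applied to the pair $y_1 z$, the set $\Gamma(y_1, z)$ is edge-free, so no $w \in \Gamma(y_1, z) \setminus \{y_2, y_3\}$ can satisfy $y_2 y_3 w \in E$. Hence $\Gamma(y_1, z) \cap \Gamma(y_2, y_3) = \emptyset$. The identical argument with $x$ in place of $z$ yields $\Gamma(y_1, x) \cap \Gamma(y_2, y_3) = \emptyset$, so both sets lie inside $V(G_n) \setminus \Gamma(y_2, y_3)$, of size at most $(1-c)|V(G_n)|$; inclusion-exclusion then gives $|\Gamma(y_1, x) \cap \Gamma(y_1, z)| \geq (3c - 1)|V(G_n)| > 0$. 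Picking $u$ in this intersection, a further $F_{3,2}$-freeness argument on the pair $y_1 u$ (which has $x, z \in \Gamma(y_1, u)$) delivers $\Gamma(y_1, u) \cap \Gamma(x, z) = \emptyset$.

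To close the argument one must synthesize these pairwise disjointness statements into three codegree-$\geq cn$ sets that are pairwise essentially disjoint, yielding $3c \leq 1 + o(1)$ and so the contradiction $c \leq 1/3$. The plan is to iterate Lemma~\ref{lem:extcodegree} with further weightings targeting pairs such as $y_2 y_3$, $y_1 u$ or $x z$, chosen so that the new extension vertex picks up enough edges to force a third disjointness. The $S_3'$-freeness constraint, which forces the triple intersection $\Gamma(y_1y_2) \cap \Gamma(y_1y_3) \cap \Gamma(y_2y_3)$ to have size at most one, combined with an averaging argument over the leaf pairs, shows that the three sets $Z_1, Z_2, Z_3$ of vertices realising each of the three possible "two-leaf-pair" patterns together have size at least $3\varepsilon |V(G_n)|$, which will be used to locate the required auxiliary vertex. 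The main obstacle is precisely this final synthesis: the elementary two-set disjointness only yields $c \leq 1/2$, so the sharp bound $c \leq 1/3$ requires a careful combination of the $F_{3,2}$-free analysis at multiple auxiliary vertices with the $S_3'$-free bound on triangle-completers.
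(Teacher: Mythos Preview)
Your proposal is incomplete, and you essentially concede this yourself: you write that ``the elementary two-set disjointness only yields $c\le 1/2$'' and that the sharp bound ``requires a careful combination'' which you do not supply. The iterative extension scheme you set up produces various pairwise disjointness relations among codegree neighbourhoods, but you never exhibit three pairwise disjoint sets each of size $\ge cn$, and it is not clear that your framework can be pushed that far without substantial new ideas.

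The paper's proof is much shorter and sidesteps all of this. The key observation you are missing is that one may simply \emph{discard} the $F_{3,2}$ constraint: trivially $\gamma(F_{3,2},S_3')\le \gamma(S_3')$. Next, $S_3'$ is a subgraph of the blow-up $K_4^-(2)$, so $\gamma(S_3')\le \gamma(K_4^-(2))$, and by blow-up invariance (Lemma~\ref{lem:blowupcodegree}) this equals $\gamma(K_4^-)$. Finally, $\gamma(K_4^-)\le 1/3$ by a one-line argument: in a $K_4^-$-free $3$-graph, for any edge $xyz$ the three neighbourhoods $\Gamma(x,y)$, $\Gamma(x,z)$, $\Gamma(y,z)$ are pairwise disjoint (a common vertex in two of them would create a $K_4^-$), so each has size at most $n/3$.

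Note how this last step delivers exactly the ``three pairwise disjoint codegree sets'' you were struggling to manufacture. Your difficulty arose because $S_3'$-freeness by itself only bounds the \emph{triple} intersection of the three leaf-pair neighbourhoods, not the pairwise intersections; passing to $K_4^-$ via blow-up invariance is precisely what upgrades this to pairwise disjointness. All the machinery involving $F_{3,2}$-freeness, auxiliary vertices $z$ and $u$, and repeated applications of Lemma~\ref{lem:extcodegree} is unnecessary for this particular lemma.
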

\begin{proof}
Clearly $\gamma(F_{3,2},S_3')\leq\gamma(S_3')$ and since $S_3'$ is a subgraph of
$K_4^{-}(2)$, it is enough by Lemma~\ref{lem:blowupcodegree} to show that
$\gamma(K_4^{-})\leq 1/3$. And indeed $\mathrm{coex}(n,K_4^{-})\leq n/3$ since
if we take any edge $xyz$ in a $K_4^{-}$-free $3$-graph, the neighbourhoods
$\Gamma(x,y)$, $\Gamma(x,z)$, $\Gamma(y,z)$ must be disjoint. Thus
$\gamma(K_4^-)\leq 1/3$ as claimed.
\end{proof}

\begin{lemma}\label{lem:sk'_a}
Let $k \geq 3$. Then $\gamma(F_{3,2}\vert S_k')\leq k/(3k-1)$.
\end{lemma}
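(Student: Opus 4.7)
The plan is to apply the extension Lemma~\ref{lem:extcodegree} with an $S_k'$-symmetric weighting $\underline{\alpha}$ on $V(S_k')^{(2)}$. Suppose for contradiction that $\gamma(F_{3,2}\mid S_k') > k/(3k-1)$; then there is a sequence of $F_{3,2}$-free $3$-graphs containing $S_k'$ with $\liminf \delta_2(G_n)/\vert V(G_n)\vert > k/(3k-1)$, and Lemma~\ref{lem:extcodegree} produces an $F_{3,2}$-free simple extension $H'$ of $S_k'$ (by a new vertex $z$) with $w_{\underline{\alpha}}(L(H';S_k')) > k/(3k-1)$. It therefore suffices to exhibit a weighting $\underline{\alpha}$ for which every link $L \subseteq V(S_k')^{(2)}$ whose corresponding extension $H'$ is $F_{3,2}$-free satisfies $w_{\underline{\alpha}}(L) \le k/(3k-1)$.

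We first analyse which links $L$ give $F_{3,2}$-free extensions. The graph $S_k'$ itself contains no $F_{3,2}$, since the only pairs of $V(S_k')$ with three or more common neighbours are the $x_sy_i$, whose common neighbourhood $\{y_j:j\neq i\}$ contains no edge of $S_k'$. Hence any copy of $F_{3,2}$ in $H'$ must use the new vertex $z$, and a case split on whether $z$ plays the role of a vertex of the central pair or of a vertex in the triangle of common neighbours shows that $L$ creates a copy of $F_{3,2}$ precisely when one of the following holds:
\begin{enumerate}[(I)]
\item there exist $b\in V(S_k')$ and an edge $cde\in E(S_k')$ with $b\notin\{c,d,e\}$ and $bc,bd,be\in L$; or
\item there exist a pair $ab\in L$ and two distinct common neighbours $p,q$ of $a,b$ in $S_k'$ with $pq\in L$ and $\vert\{a,b,p,q\}\vert=4$.
\end{enumerate}
Recalling that the pairs of $V(S_k')$ with two or more common neighbours are exactly the $x_sy_i$ (common neighbourhood $\{y_j:j\neq i\}$) and the $y_iy_j$ (common neighbourhood $\{x_1,x_2\}$), and that every edge of $S_k'$ has the form $x_sy_iy_j$, conditions (I) and (II) unfold into four explicit forbidden patterns in $L$.

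We now take the weighting
\[
\alpha_{x_1x_2}=\frac{k-1}{3k-1},\qquad
\alpha_{x_sy_i}=\frac{1}{2(3k-1)},\qquad
\alpha_{y_iy_j}=\frac{2}{(k-1)(3k-1)},
\]
which sums to $1$ over $V(S_k')^{(2)}$. Setting $d=\mathbf{1}[x_1x_2\in L]$ and $I=\{i:x_sy_i\in L\text{ for some }s\}$, a routine case analysis on $(d,\vert I\vert)$ --- using (I) to bound the number of $x_sy_i$-pairs in $L$ when $d=1$, and (II) to control how $y_iy_j$-pairs and $x_sy_i$-pairs may coexist --- shows that the extremal $F_{3,2}$-free links, with weight exactly $k/(3k-1)$, are (up to the $S_k'$-symmetry):
\begin{enumerate}[(a)]
\item $L=\{x_sy_i:s\in\{1,2\},\,i\in[k]\}$, of weight $2k\alpha_{x_sy_i}$;
\item $L=\{y_iy_j:1\le i<j\le k\}$, of weight $\binom{k}{2}\alpha_{y_iy_j}$;
\item $L=\{x_1x_2,x_1y_{i_1},x_2y_{i_2}\}$ for some $i_1,i_2\in[k]$, of weight $\alpha_{x_1x_2}+2\alpha_{x_sy_i}$.
\end{enumerate}
Every other $F_{3,2}$-free link has strictly smaller weight, with the sole exception that when $k=3$, the link $L=\{x_1y_{i_1},x_1y_{i_2},x_2y_{i_1},x_2y_{i_2},y_{i_1}y_{i_2}\}$ also attains the bound $3/8$.

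The main obstacle is calibrating the weighting so that configurations (a), (b) and (c) are simultaneously balanced at $k/(3k-1)$. Setting $\alpha_{x_1x_2}=0$, for example, would force $2k\alpha_{x_sy_i}+\binom{k}{2}\alpha_{y_iy_j}=2k/(3k-1)<1$, violating the normalisation constraint; placing weight $(k-1)/(3k-1)$ on the zero-codegree pair $x_1x_2$ is precisely what is needed to restore normalisation while keeping configuration (c) at the tight bound.
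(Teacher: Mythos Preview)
Your approach is essentially the paper's own: you apply Lemma~\ref{lem:extcodegree} with the identical weighting $\alpha_{x_1x_2}=(k-1)/(3k-1)$, $\alpha_{x_sy_i}=1/(2(3k-1))$, $\alpha_{y_iy_j}=2/((k-1)(3k-1))$, and then case-check that no $F_{3,2}$-free link can exceed weight $k/(3k-1)$. The paper organises the cases by whether $L$ meets $P_1=\{x_1x_2\}$, $P_2=\{x_sy_i\}$, $P_3=\{y_iy_j\}$, while you parametrise by $(d,\vert I\vert)$ and instead catalogue the tight configurations (a)--(c); this is a cosmetic difference, and your explicit identification of the extremal links (together with the remark on why the weight on $x_1x_2$ is forced) is a pleasant addition. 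Your characterisation (I)--(II) of when $L$ creates a copy of $F_{3,2}$ is correct, and the sketched case analysis goes through.
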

\begin{proof}
Suppose $(G_n)_{n \in \mathbb{N}}$ is a $3$-graph sequence tending to infinity
and containing $S_k'$ with
\[\liminf_{n\rightarrow\infty}\frac{\delta_2(G_n)}{\vert V(G_n)
\vert}>\frac{k}{3k-1}\,.\]
Denote the vertices of $S_k'$ by $V(S_k')=\{x_1,x_2, y_1, \ldots  y_k\}$ as
before, and partition the collection of pairs $V(S_k')^{(2)}$ into the three
sets $P_1=\{x_1x_2\}$, $P_2=\{x_iy_j:1\leq i\leq 2,\, 1\leq j\leq k\}$ and
$P_3=\{y_iy_j:1\leq i<j\leq k\}$.

We shall apply Lemma~\ref{lem:extcodegree} using the following weight vector
$\underline{\alpha}\in \Delta(V(S_k')^{(2)})$:
\[\alpha_{uv}=\begin{cases}
\frac{k-1}{3k-1} &\textrm{if~}uv\in P_1\,,\\
\frac{1}{6k-2} &\textrm{if~}uv \in P_2\,,\\
\frac{2}{(k-1)(3k-1)} &\textrm{if~}uv \in P_3\,.
\end{cases}\]

Lemma \ref{lem:extcodegree} guarantees that there is an extension $H$ of $S_k'$
for which
\[w_{\underline{\alpha}}(L(H;S_k'))=\sum_{uv\in
L(H;S_k')}\alpha_{uv}\geq\liminf_{n\rightarrow\infty}\frac{\delta_2(G_n)}{\vert
V(G_n) \vert}>\frac{k}{3k-1}\,,\]
and an infinite subsequence $(G_{n_k})_{k \in \mathbb{N}}$ such that $(G_{n_k})_{k \in \mathbb{N}}$
contains $H$.

We now show that $H$ must contain $F_{3,2}$ to conclude the proof of the lemma.
This is essentially case-checking.
Write $L$ for the set $L(H;S_k')$, $w$ for $w_{\underline{\alpha}} $ and $z$ for
the vertex added to $S_k'$ to form $H$. 
\medskip

\noindent\textbf{Case 1:} suppose that $L$ contains the single pair $x_1x_2$
from $P_1$. If $L$ contains any pair $y_iy_j$ from $P_3$ then $y_iy_j\vert
x_1x_2z$, so that we have a copy of $F_{3,2}$ as claimed. On the other hand if
$P_3$ contains no edge of $L$, then consider $\vert L \cap P_2 \vert$. If this is at
least three, then at least one of the vertices $x_1,x_2$, without loss of
generality $x_1$, must be incident to at least two edges of $L\cap P_2$. Let two
such edges be $x_1y_i$ and $x_1y_j$. Then $zx_1 \vert x_2y_iy_j$, so that again
we have a copy of $F_{3,2}$ as claimed. Finally note that if $L\cap P_3 =
\emptyset$ and $\vert L \cap P_2 \vert \leq 2$ then
 $$
w(L) \leq \frac{ (k-1)\vert L\cap P_1 \vert}{3k-1}+\frac{\vert L \cap P_2
\vert}{2(3k-1)}
\leq  \frac{k}{3k-1},
 $$
contradicting the fact that $w(L)>k/(3k-1)$. Thus we are done in this case.
\medskip

\noindent\textbf{Case 2:} suppose that $L$ does not contain $x_1x_2$, but
contains at least one edge from $P_2$. Without loss of generality let $x_1y_i$
be one such edge.

If $y_i$ is incident to two edges $y_iy_{j_1}$ and
$y_iy_{j_2}$ of $L \cap P_3$, then $zy_i\vert x_1 y_{j_1}y_{j_2}$ and we have a
copy of $F_{3,2}$ as required. On the other hand if $L\cap P_3$ contains at
least one edge $y_{j_1}y_{j_2}$ not incident to $y_i$, then $x_1y_i \vert
zy_{j_1}y_{j_2}$, again spanning a copy of $F_{3,2}$.

Now if $L$ contains exactly one edge $y_iy_j$ from $P_3$ then all edges in $L
\cap P_2$ are incident with one of $y_i,y_j$. In particular, $\vert L \cap P_2
\vert \leq 4$ and
\begin{align*}
w(L)& = \frac{ \vert L\cap P_2 \vert}{2(3k-1)}+\frac{2\vert L \cap P_3
\vert}{(k-1)(3k-1)}\\
&\leq  \frac{2}{3k-1}+ \frac{2}{(k-1)(3k-1)}\\
&=\frac{k}{(3k-1)} \frac{2}{(k-1)}\leq \frac{k}{3k-1} && \textrm{(since $k\geq 3$),}
\end{align*}
a contradiction. On the other hand if $L$ contained no edge from $P_3$, then
 $$
w(L) = \frac{ \vert L\cap P_2 \vert}{2(3k-1)}
\leq  \frac{k}{3k-1}\,,
 $$
again a contradiction of our assumption that $w(L)>k/(3k-1)$.
\medskip

\noindent\textbf{Case 3:} finally, suppose that $L$ contains no edge from $P_1$
or $P_2$. Then $L\subseteq P_3$, and
 $$
w(L)\leq \frac{2\vert P_3 \vert}{(k-1)(3k-1)}
= \frac{k}{3k-1},
 $$
contradicting our assumption that $w(L)> k/(3k-1)$.

It follows that $H$ must contain a copy of $F_{3,2}$, as claimed.
\end{proof}

\begin{lemma}\label{lem:sk'_b}
Let $k\geq 3$. Then $\gamma(F_{3,2},S_{k+1},K_4\vert S_k')\leq 1/3$.
\end{lemma}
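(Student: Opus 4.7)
The plan is to apply Lemma~\ref{lem:extcodegree} with a weight vector $\underline{\alpha}$ on $V(S_k')^{(2)}$ chosen so that every non-triggering link has weight at most $1/3$. Using the same partition $V(S_k')^{(2)}=P_1 \sqcup P_2 \sqcup P_3$ as in the proof of Lemma~\ref{lem:sk'_a}, I would take
\[
\alpha_{x_1x_2} = \frac{k-2}{3(k-1)}, \qquad \alpha_{uv} = \frac{1}{6(k-1)} \text{ for } uv \in P_2, \qquad \alpha_{uv} = \frac{2}{3k(k-1)} \text{ for } uv \in P_3.
\]
A direct check shows these weights sum to $1$; they are calibrated so that each of the three ``natural'' extension link-types arising from the conjectured extremal $T_{A,B,C}$ construction (namely $L = \{x_1x_2, x_1y_i, x_2y_j\}$ from a vertex co-located with $x_1, x_2$; a maximal bipartite $L \subseteq P_2$ that just avoids a full star; or $L = P_3$) has total weight exactly $1/3$.

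Assuming for contradiction a sequence $(G_n)_{n \in \mathbb{N}}$ of $\{F_{3,2}, S_{k+1}, K_4\}$-free $3$-graphs containing $S_k'$ with $\liminf_n \delta_2(G_n)/\vert V(G_n) \vert > 1/3$, Lemma~\ref{lem:extcodegree} yields a simple extension $H$ of $S_k'$ by a vertex $z$ with link $L$, contained in infinitely many $G_n$'s, satisfying $w_{\underline{\alpha}}(L) > 1/3$. I would then split into cases depending on $L \cap P_1$ and $L \cap P_3$. Case~1 ($x_1x_2 \in L$): any $y_jy_l \in L$ would yield $F_{3,2}$ via spine $y_jy_l$ with triangle $\{x_1, x_2, z\}$, and any two pairs $x_ry_j, x_ry_l \in L$ (same $r$) would yield $F_{3,2}$ via spine $zx_r$ with triangle $\{x_{3-r}, y_j, y_l\}$; together these bound $w(L) \leq \alpha_{x_1x_2} + 2\alpha_{P_2} = 1/3$. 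Case~2 ($x_1x_2 \notin L$, $L \cap P_3 = \emptyset$, so $L \subseteq P_2$): forbidding $S_{k+1}$ centered at $x_r$ with leaves $\{z, y_1, \ldots, y_k\}$ forces $\vert L \cap \{x_ry_j:\,j\in[k]\} \vert \leq k-1$ for each $r \in \{1,2\}$, giving $w(L) \leq 2(k-1)\alpha_{P_2} = 1/3$.

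Case~3 ($x_1x_2 \notin L$, $L \cap P_3 \neq \emptyset$) is the main obstacle and is where all three forbidden structures must be used in tandem. I would stratify by $T = \{i : \exists r,\ x_ry_i \in L\}$. The $F_{3,2}$-constraint from spines $x_ry_i$ forces every $P_3$-edge in $L$ to touch every index in $T$; so $\vert T \vert \geq 2$ implies $\vert T \vert = 2$ and $L \cap P_3$ is the single pair $y_{i_1}y_{i_2}$, at which point $K_4$-freeness on $\{z, x_r, y_{i_1}, y_{i_2}\}$ restricts $\vert L \cap P_2 \vert \leq 2$. For $\vert T \vert = 1$, the $F_{3,2}$-constraint from spine $zy_i$ (with $i$ the unique element of $T$) forces $\vert L \cap P_3 \vert \leq 1$ as soon as $L \cap P_2 \neq \emptyset$. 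For $\vert T \vert = 0$, $L \subseteq P_3$ and $w(L) \leq \binom{k}{2}\alpha_{P_3} = 1/3$. The arithmetic inequality $(k+2)/(3k(k-1)) \leq 1/3$, valid precisely when $k \geq 3$, closes the mixed sub-cases $\vert T \vert \in \{1,2\}$; this is where the hypothesis $k \geq 3$ enters the argument. In every case $w(L) \leq 1/3$, contradicting $w_{\underline{\alpha}}(L) > 1/3$ and completing the proof.
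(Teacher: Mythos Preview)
Your proposal is correct and follows essentially the same approach as the paper: the same weight vector $\underline{\alpha}$, the same application of Lemma~\ref{lem:extcodegree}, and the same forbidden configurations ($F_{3,2}$ from a disjoint $P_2$--$P_3$ pair or a $P_2$-cherry, $K_4$ from a $P_3$-edge capped by two $P_2$-edges at the same $x_r$, and $S_{k+1}$ from a full $x_r$-star in $P_2$). The only cosmetic difference is that after handling $x_1x_2\in L$ you branch on whether $L\cap P_3=\emptyset$ whereas the paper branches on whether $L\cap P_2=\emptyset$; your stratification by $T$ cleanly recovers the paper's sub-cases, and the decisive inequality $(k+2)/(3k(k-1))\le 1/3$ for $k\ge 3$ is exactly the one the paper uses.
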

\begin{proof}
This is very similar to the proof of Lemma \ref{lem:sk'_a}. Suppose $(G_n)_{n
\in \mathbb{N}}$ is a $3$-graph sequence tending to infinity which contains
$S_k'$ and satisfies \[\liminf_{n\rightarrow\infty}\frac{\delta_2(G_n)}{\vert
V(G_n) \vert }>\frac{1}{3}\,.\] 
Denote the vertices of $S_k'$ by $V(S_k')=\{x_1,x_2,y_1,\ldots,y_k\}$ as before
and partition ${V(S_k')}^{(2)}$ into the three sets $P_1=\{x_1x_2\}$,
$P_2=\{x_iy_j:1\leq i\leq 2,\, 1\leq j\leq k\}$ and $P_3=\{y_iy_j:1\leq i<j\leq
k\}$.

We apply Lemma~\ref{lem:extcodegree} with a slightly different weighting. Let
$\underline{\alpha}$ be defined by:
\[\alpha_{uv}=\begin{cases}
\frac{k-2}{3(k-1)} &\textrm{if~}uv\in P_1\,,\\
\frac{1}{6(k-1)} &\textrm{if~}uv \in P_2\,,\\
\frac{2}{3k(k-1)} &\textrm{if~}uv \in P_3\,.
\end{cases}\]
Lemma~\ref{lem:extcodegree} guarantees the existence of an extension $H$ of
$S_k'$ with
\[w_{\underline{\alpha}}(L(H;S_k'))=\sum_{uv\in
L(H;S_k')}\alpha_{uv}\geq\liminf_{n\rightarrow\infty}\frac{\delta_2(G_n)}{\vert
V(G_n)\vert}>\frac{1}{3}\,,\]
and of an infinite subsequence $(G_{n_k})_{k \in \mathbb{N}}$ such that
$(G_{n_k})_{k \in \mathbb{N}}$ contains $H$.

We now show that any such extension $H$ must contain either $F_{3,2}$, $S_{k+1}$
or $K_4$. As in the previous lemma, this is just a matter of case-checking.
Write $L$ as before for the set $L(H;S_k')$, $w$ for $w_{\underline{\alpha}} $
and $z$ for the vertex added to $S_k'$ to form $H$. 
\medskip

\noindent\textbf{Case 1:} suppose $x_1 x_2 \in L$. By the analysis in Case~1 of
Lemma~\ref{lem:sk'_a}, we know that if $L$ contains any edge from $P_3$ or at
least three edges from $P_2$ then $H$ contains a copy of $F_{3,2}$ and we are
done. On the other hand if neither of these happen then
 $$
w(L) = \frac{(k-2)\vert L\cap P_1 \vert}{3(k-1)}+ \frac{\vert L\cap P_2
\vert}{6(k-1)} \leq \frac{k-2}{3(k-1)}+ \frac{1}{3(k-1)}= \frac{1}{3},
 $$
contradicting our assumption that $w(L)>1/3$.
\medskip

\noindent\textbf{Case 2:} suppose $x_1x_2 \notin L$, but $L\cap P_2 \neq
\emptyset$. By the analysis in Case~2 of Lemma~\ref{lem:sk'_a}, we know that if
$L$ contain an edge from $P_2$ incident to two edges from $P_3$ or an edge from
$P_2$ and a disjoint edge from $P_3$, then $H$ contains a copy of $F_{3,2}$ and
we are done.

Also if $L$ contains an edge $y_{j_1}y_{j_2}$ of $P_3$ and two edges
$x_iy_{j_1}$, $x_iy_{j_2}$ from $P_2$ then $zx_i y_{j_1}y_{j_2}$ forms a copy of
$K_4$, and we are done. In addition if for some $i\in\{1,2\}$ $L$ contains all $k$ edges of the form
$x_i y_j$ then $x_i, z, y_1, \ldots y_k$ forms a copy of $S_{k+1}$, and we are
done.

Now let us suppose none of these things happens. If $L$ contains an edge from
$P_3$ then $\vert L \cap P_2 \vert \leq 2$ and $\vert L \cap P_3 \vert\leq 1 $
(else we have a copy of $K_4$ or $F_{3,2}$) and thus
\begin{align*}
w(L)&\leq \frac{2}{6(k-1)}+ \frac{2}{3k(k-1)}\\
&<1/3 && \textrm{(since $k\geq 3$),}
\end{align*}
a contradiction. On the other hand if $L$ contains no edge from $P_3$ then
$\vert L \cap P_2 \vert \leq 2(k-1)$ (else we have a copy of $S_{k+1}$) and
\begin{align*}
w(L)&\leq \frac{2(k-1)}{6(k-1)}=1/3\,,
\end{align*}
again a contradiction.
\medskip

\noindent\textbf{Case 3:} finally suppose $L$ contains no edge from $P_1$ or
$P_2$. Then $L \subseteq P_3$ and 
$$
w(L)\leq \frac{2\binom{k}{2}}{3k(k-1)}=1/3\,,
$$
contradicting yet again our assumption that $w(H)>1/3$.

It follows that $H$ must contain a copy of one of $F_{3,2}$, $K_4$ or $S_{k+1}$,
as claimed.
\end{proof}

\begin{lemma}\label{lem:k4}
$\gamma(F_{3,2}\vert K_4(2))\leq 1/3$.
\end{lemma}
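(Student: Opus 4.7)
The plan is to apply Lemma~\ref{lem:extcodegree} with a natural symmetric weighting on pairs of $V(K_4(2))$ and then to show, by case analysis, that any simple extension of $K_4(2)$ whose link has weight exceeding $1/3$ must contain a copy of $F_{3,2}$. Write $V(K_4(2))=P_1\sqcup P_2\sqcup P_3\sqcup P_4$ (with $\vert P_i\vert=2$), and call a pair in ${V(K_4(2))}^{(2)}$ \emph{internal} if both endpoints lie in a common $P_i$ and \emph{external} otherwise; there are $4$ and $24$ such pairs respectively. I set $\alpha_{uv}=1/12$ for each internal pair and $\alpha_{uv}=1/36$ for each external pair, so $\sum_{uv}\alpha_{uv}=4/12+24/36=1$. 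Suppose some sequence $(G_n)_{n\in\mathbb{N}}$ tending to infinity contains $K_4(2)$ and satisfies $\liminf_{n\to\infty}\delta_2(G_n)/\vert V(G_n)\vert>1/3$; Lemma~\ref{lem:extcodegree} then yields a simple extension $H$ of $K_4(2)$ by a new vertex $z$, contained in an infinite subsequence, whose link $L=L(H;K_4(2))$ satisfies $w(L)>1/3$.

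The main task is to show that $H$ contains a copy of $F_{3,2}$. First observe that $K_4(2)$ itself is $F_{3,2}$-free: the joint neighborhood of any pair in $K_4(2)$ lies in at most two of the $P_i$, so spans no edge of $K_4(2)$. Hence any $F_{3,2}$ in $H$ uses $z$, giving two possible configurations: either (i) $z$ is an apex, meaning some $b\in V(K_4(2))$ has an $L$-neighborhood $N_L(b):=\{v:bv\in L\}$ containing three vertices in three distinct parts (which then span a $K_4(2)$-edge); or (ii) $z$ is a base vertex, meaning there exist disjoint pairs $ab,cd\in L$ with $ab$ external across some $\{P_i,P_j\}$ and $cd\subseteq P_k\cup P_l$ for the complementary $\{k,l\}$. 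Writing $E_{ij}$ for the set of external pairs across $\{P_i,P_j\}$ and $I_i$ for the internal pair of $P_i$, assume $L$ avoids both (i) and (ii), and let $\mathcal{S}=\{\{i,j\}:L\cap E_{ij}\neq\emptyset\}$, viewed as an edge set in the graph $K_4$ on $\{1,2,3,4\}$. Avoidance of (ii) forbids $\mathcal{S}$ from containing two disjoint $K_4$-edges, so $\mathcal{S}$ is empty, a single edge, two edges sharing a vertex, a triangle, or a star.

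The cases then run as follows. If $\mathcal{S}=\emptyset$, all of $L$ is internal and $w(L)\leq 4\cdot(1/12)=1/3$. For $\vert \mathcal{S}\vert=1$ or $2$, (ii) confines $L$ to a small set of pairs (e.g.\ $L\subseteq I_1\cup I_2\cup E_{12}$ or $L\subseteq I_1\cup E_{12}\cup E_{13}$ up to relabeling), and a direct bound gives $w(L)\leq 5/18$ or $w(L)\leq 11/36$, both less than $1/3$. For $\mathcal{S}$ a triangle, say on $\{1,2,3\}$, avoidance of (ii) forbids every internal pair and every external pair touching $P_4$, leaving $L\subseteq E_{12}\cup E_{13}\cup E_{23}$ and $w(L)\leq 12/36=1/3$. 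The hard case is $\mathcal{S}$ a star, say $\{\{1,2\},\{1,3\},\{1,4\}\}$: here (ii) alone leaves $L\subseteq I_1\cup E_{12}\cup E_{13}\cup E_{14}$, potentially $13$ pairs, so (i) must also be invoked. The requirement that $N_L(p_1)$ and $N_L(p'_1)$ each span at most two of the $P_i$, together with $L\cap E_{12}, L\cap E_{13}, L\cap E_{14}$ all being nonempty, forces $I_1\notin L$ and then forces the two ``missing parts'' at $p_1$ and $p'_1$ to be distinct, capping $\vert L\vert\leq 4+2+2=8$ and so $w(L)\leq 8/36<1/3$. In every case $w(L)\leq 1/3$, contradicting $w(L)>1/3$; this contradiction forces $H$ to contain $F_{3,2}$, completing the proof. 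The main obstacle is the star case, where the Type~(i) and Type~(ii) constraints must be used in tandem: neither alone suffices.
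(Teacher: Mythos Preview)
Your proof is correct, but it takes a different route from the paper's. The paper does not work with the full $K_4(2)$; instead it proves the (slightly stronger) statement $\gamma(F_{3,2}\mid K_4'')\le 1/3$, where $K_4''$ is the $6$-vertex $3$-graph obtained by duplicating only \emph{two} vertices of $K_4$. On $K_4''$ it places weight $1/6$ on just six carefully chosen pairs (the four ``cross'' pairs $ac_1,ad_1,bc_1,bd_1$ together with the two internal pairs $c_1c_2,d_1d_2$), and the resulting case analysis is very short: once the link $L$ has weight exceeding $1/3$, at least three of these six pairs are present, and a handful of direct checks produce an $F_{3,2}$.

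Your approach, by contrast, stays with the full $8$-vertex $K_4(2)$ and uses the natural $S_4$-symmetric weighting. This is conceptually clean and your two-type classification of potential $F_{3,2}$'s is correct (your observation that the extension $H$ adds no new edges inside $V(K_4(2))$, so any $F_{3,2}$ must use $z$, is implicit in the proof of Lemma~\ref{lem:extcodegree}). The cost is the longer case analysis, especially the star case, where you must combine the Type~(i) and Type~(ii) constraints; your handling of that case is sound (the key point that $I_1\in L$ would force each of $p_1,p_1'$ to reach at most one of $P_2,P_3,P_4$, leaving one $E_{1j}$ empty, is exactly right, and the $4+2+2$ count after excluding $I_1$ follows from $|A_1\cap A_1'|\le 1$). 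What the paper's approach buys is brevity and the avoidance of this delicate case; what yours buys is a direct argument on $K_4(2)$ itself without passing through an auxiliary subgraph.
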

\begin{proof}
We shall in fact prove the slightly stronger statement that $\gamma(F_{3,2}\vert
K_4'')\leq 1/3$, where $K_4''$ is the $3$-graph on $6$ vertices
$\{a,b,c_1,c_2,d_1,d_2\}$ with edges $\{abc_i:\ i\in[2]\}\cup\{abd_i:\ i\in
[2]\}\cup\{ac_id_j:\ i,j\in [2]\}\cup\{bc_id_j:\ i,j\in[2]\}$. In other words,
$K_4''$ is the $3$-graph formed by duplicating two distinct vertices of $K_4$
(and hence a subgraph of $K_4(2)$).

Suppose that $(G_n)_{n \in \mathbb{N}}$ is a $3$-graph sequence tending to
infinity which contains $K_4''$ and satisfies
\[\liminf_{n\rightarrow\infty}\frac{\delta_2(G_n)}{\vert
V(G_n)\vert}>\frac{1}{3}\,.\] 
We apply Lemma~\ref{lem:extcodegree} once more, with the following weighting
$\underline{\alpha}$:
\[\alpha_{uv}=\begin{cases}
\frac{1}{6} &\text{if~}uv\in \{ac_1,ad_1,bc_1,bd_1,c_1c_2,d_1d_2\}\,,\\
0 &\text{otherwise}\,.
\end{cases}\]
Lemma~\ref{lem:extcodegree} guarantees the existence of an extension $H$ of
$K_4''$ with
\[w_{\underline{\alpha}}(L(H;K_4''))=\sum_{uv\in
L(H;K_4'')}\alpha_{uv}\geq\liminf_{n\rightarrow\infty}\frac{\delta_2(G_n)}{\vert
V(G_n)\vert}>\frac{1}{3}\,,\]
and of an infinite subsequence $(G_{n_k})_{k \in \mathbb{N}}$ such that
$(G_{n_k})_{k \in \mathbb{N}}$ contains $H$.

We now show that any such extension $H$ contains a copy of $F_{3,2}$ as a
subgraph. Write again $L$ for the set $L(H;K_4'')$, $w$ for
$w_{\underline{\alpha}} $ and $z$ for the vertex added to $K_4''$ to form $H$.

Since $w(L)>1/3$, at least three of the edges in
$\{ac_1,ad_1,bc_1,bd_1,c_1c_2,d_1d_2\}$ must be contained in the link graph $L$.
If the three edges in that set which are incident to $c_1$ are in $L$, then
$zc_1\vert c_2ab$ and we have a copy of $F_{3,2}$. Also if $c_1c_2 \in L$ and
$L$ contains either $ad_1$ or $bd_1$ then we have either $ad_1\vert c_1c_2z$
or $bd_1 \vert c_1c_2z$, and thus we have a copy of $F_{3,2}$. Similarly if
$d_1d_2 \in L$ and either $ac_1$ or $bc_1$ are in $L$ then we have $ac_1
\vert d_1d_2z$ or $bc_1 \vert d_1d_2z$.

It follows in particular that if $L$ contains $c_1c_2$ then we have a copy of
$F_{3,2}$. In exactly the same way we are done if $d_1d_2 \in L$. So finally
suppose that neither of $c_1c_2$ and $d_1d_2$ is contained in $L$. Then at least
three of the four edges $ac_1$, $ad_1$, $bc_1$, $bd_1$ must be in. In particular
we must contain a pair of non-incident edges from that set. Assume without loss
of generality that $ad_1$ and $bc_1$ are both in. Then $ad_1 \vert bc_1z$, so
that we have again a copy of $F_{3,2}$, as claimed.
\end{proof}

With Lemmas~\ref{lem:s3'}, \ref{lem:sk'_a}, \ref{lem:sk'_b} and \ref{lem:k4} in
hand, we can finally prove our codegree density result.
\begin{proof}[Proof of Theorem~\ref{f32density}]
We first show by induction on $k$ that $\gamma(F_{3,2}, S_k')\leq 1/3$ for all
$k\geq 3$.

For the base case, we know from Lemma~\ref{lem:s3'} that $\gamma(F_{3,2},
S_3')\leq 1/3$. For the inductive step, suppose we knew that $\gamma(F_{3,2},
S_K')\leq 1/3$ for some $K\geq 3$. We know from Lemma~\ref{lem:sk'_b} that
$\gamma(F_{3,2}, K_4, S_{K+1}\vert S_K')\leq 1/3$. It then follows by
Lemma~\ref{lem:notationlink} that 

\begin{align*} 
\gamma(F_{3,2}, K_4, S_{K+1})&=\max\Bigl(\gamma\left(F_{3,2}, K_4, S_{K+1}, S_K'\right), \gamma\left(F_{3,2}, K_4, S_{K+1}\vert S_K'\right) \Bigr)\\
&\leq \max\left( \gamma\left(F_{3,2}, S_K'\right), \frac{1}{3}\right)\leq \frac{1}{3}.
\end{align*}

Using blow-up invariance (Lemma~\ref{lem:blowupcodegree}), we deduce that
$\gamma(F_{3,2}, K_4(2), S_{K+1}')\leq 1/3$. Combining this with the result of
Lemma~\ref{lem:k4} that $\gamma(F_{3,2}\vert K_4(2))\leq 1/3$, we have by one
more application of Lemma~\ref{lem:notationlink} that $\gamma(F_{3,2},
S_{K+1}')\leq 1/3$.

It follows that $\gamma(F_{3,2}, S_k')\leq 1/3$ for all $k \geq 3$, as claimed.
Our codegree density result is straightforward from this: for any $k\geq 3$ we
have by Lemma~\ref{lem:notationlink} that
\[\gamma(F_{3,2})= \max\left(\gamma(F_{3,2}\vert S_k'), \gamma(F_{3,2},
S_k')\right)\,.\]
We also know from Lemma~\ref{lem:sk'_a} that $\gamma(F_{3,2} \vert S_k')\leq
k/(3k-1)$. Since as shown inductively above we have $\gamma(F_{3,2},S_k')\leq
1/3$ for all $k\geq 3$, it follows that 
\[\gamma(F_{3,2})\leq \inf_{k \geq 3} \left(\max\left(\frac{k}{3k-1},
\frac{1}{3}\right)\right)=\frac{1}{3}\,,\]
as desired.
\end{proof}

\section{Codegree density and stability via flag algebras}\label{flagsection}

In this section, we use the flag algebra method of
Razborov~\cite{Razborov07,Razborov10} to give a second proof of
Theorem~\ref{f32density}
and to obtain the stability result claimed in Theorem~\ref{f32stability}.
Several good expositions of flag algebras from an extremal combinatorics
perspective have already appeared in the literature~\cite{BaberTalbot11,hirst:4vertex,
FalgasRavryVaughan13, Keevash11}. We shall therefore be rather brief,
directing the reader to the aforementioned papers
for details. Our proof is generated by computer using Vaughan's \emph{Flagmatic} package (version 2.0) ~\cite{flagmatic20}.
A proof certificate is stored
under the name \texttt{F32Codegree.js} in the ancillary folder
of the arxiv version of this paper~\cite{falgasravry+marchant+pikhurko+vaughan:arxiv}, which also contains
the flagmatic code \texttt{F32Codegree.sage} that generated the certificate. In Section~\ref{cert}
we describe the structure of the file
\texttt{F32Codegree.js} and show how the information contained therein implies the desired bound
$\gamma(F_{3,2})\le \frac13$. Since the file is large (over 2MB) and contains integers with dozens of digits, verification of the proof
requires a computer as well. In order to verify all stated properties of the proof certificate, the reader can write her own script, or use the script \texttt{inspect\_certificate.py} included in \emph{Flagmatic} to do some of the verifications for her.

\subsection{Structure of the proof certificate}\label{cert}

First of all, we refer the reader to the \emph{Flagmatic} User's
Guide~\cite{Vaughan13} that, among
many other things, describes how combinatorial structures (including types and
flags that are defined below) are stored in proof certificates.

The certificate consists of various parts. Here we describe only those that are
directly needed for verifying the validity of our proof.

\jSection{admissible\_graphs} lists all $F_{3,2}$-free $3$-graphs on $N=6$ 
vertices up to isomorphism. There are exactly 426 of them; let us denote them
by $G_1,\dots,G_{426}$. 

\jSection{types} lists \emph{types} with $2\ell<N$
vertices, i.e.\ (vertex-labelled) $F_{3,2}$-free
$3$-graphs with vertex set $\emptyset$, $[2]$ and $[4]$. For our application,
we need only one representative from each class of isomorphic
3-graphs; thus the number of listed types of order 0, 2 and 4 is
respectively $1$, $1$, and $5$. Let us denote them by $\tau_1,\dots,\tau_7$,
using the same ordering as in \emph{Flagmatic}: first by the number of vertices
and then lexicographically by the list
of $3$-edges. For example, $\tau_2$ is the type with 2 (labelled) vertices
and no 3-edges while $\tau_7$ is a vertex-labelled $K_4^3$.

For a type $\tau$ on $[k]$, a
\emph{$\tau$-flag} is a $(k+1)$-tuple $(F,x_1,\dots,x_k)$ where $F$ is an
$F_{3,2}$-free $3$-graph and $x_1,\dots,x_k\in V(F)$ are distinct vertices
of $F$ such that the map $i\mapsto x_i$ is an isomorphism
between $\tau$ and the induced subgraph $F[\{x_1,\dots,x_k\}]$. We can view
a flag as a 3-graph with $k$ labelled roots that
induce a copy of $\tau$ (while the remaining vertices are treated as
unlabelled). This leads to the natural definition of an \emph{isomorphism} $f$
between two $\tau$-flags $(F,x_1,\dots,x_k)$ and $(H,y_1,\dots,y_k)$: namely
an isomorphism $f$ between the unlabelled $3$-graphs $F$
and $H$ such that the roots are preserved, that is, 
$f(x_i)=y_i$ for every $i\in [k]$.

\jSection{flags}
contains for each $t\in [7]$ the list of all
$\tau_t$-flags $F_1^{\tau_t},\dots,F_{g_t}^{\tau_t}$ with $(N+\vert V(\tau_t)\vert)/2$
vertices up to
flag isomorphism. For
example, if $t=1$, then $\tau_t$ is the type with no vertices, and we have to
list all unlabelled 3-graphs of order 3; clearly, there are exactly two of them
(edge and non-edge).
If $t=2$, then $\tau_t$ is the (unique) 2-vertex type, and we have to list all
4-vertex 3-graphs $G$ with two roots; for $e(G)=0,1,2,3,4$ there
are respectively $1,3,4,3,1$ non-isomorphic ways of placing the roots. Thus
$g_2=12$. 

For each $i\in [7]$, the certificate (indirectly) contains a symmetric
$\left(g_i\times g_i\right)$-matrix $Q^{\tau_i}$. More precisely, $Q^{\tau_i}=RQ'R^T$ where
$Q'$ is a diagonal matrix all of whose diagonal entries are positive rational
numbers (listed in \jsection{qdash\_matrices}) and $R$ is a rational matrix
(listed in \jsection{r\_matrices}). This
representation automatically implies that the matrix $Q^{\tau_i}$ is
positive
semi-definite.

\jSection{axiom\_flags} lists all $\tau_2$-flags with $5$ vertices. Recall that
$\tau_2$ is the (unique) type with $2$ labelled vertices. There are $154$ such flags. Let us
denote them by $M_1,\dots,M_{154}$. \jSection{density\_coefficients} lists
non-negative rational numbers $c_1,\dots,c_{154}$, one for each flag $M_i$.

Let $\tau$ be a type on $[k]$. For two $\tau$-flags $(F,x_1,\dots,x_k)$ 
and $(H,x_1,\dots,x_k)$ let 
 $$
 \pp((F,x_1,\dots,x_k),(H,y_1,\dots,y_k))
 $$
be the number of $\vert V(F) \vert $-sets $X$ such that 
$\{y_1,\dots,y_k\}\subseteq X\subseteq V(H)$
and the induced $\tau$-flag $(H[X],y_1,\dots,y_k)$
is isomorphic to the $\tau$-flag $(F,x_1,\dots,x_k)$. For example,
$\pp((K_3^3,x_1,x_2),(G,y,z))$ is the codegree of $(y,z)$ in $G$, where
$(K_3^3,x_1,x_2)$ is the single $3$-edge with two roots.

Let $G$ be an arbitrary $F_{3,2}$-free $3$-graph of (large) order $n$. 

First, we compute two
parameters $\sigma_1$ and $\sigma_2$ of $G$ using the information above.
We let 
 \begin{equation}\label{eq:b}
 \sigma_1=\sum_{x_1,x_2}
\left(\pp\big(
%\flag{3:123(2)} VFR: changed to ()K_3^3, x_1,x_2)
(K_3^3,x_1,x_2),(G,x_1,x_2)\big)-\frac n3\right)\sum_{i=1}^{154}
c_i \pp\big(M_i,(G,x_1,x_2)\big),
 \end{equation}
 where the sum is over all $n(n-1)$ choices of distinct ordered pairs $(x_1,x_2)$ from $V(G)$. Note that if the minimum codegree of $G$ is at least $n/3$ then $\sigma_1 \geq 0$.

The definition of $\sigma_2$ is slightly more complicated.
Initially, set $\sigma_2=0$. 
Then for each $k \in\{0,2,4\}$ let us do the following.
%Let
%us do the
%following for each $k \in\{0,2,4\}$. %such that $6-v$ is a positive even integer. 
Enumerate all
$n(n-1)\dots (n-k+1)$ sequences $(x_1,\dots,x_k)$ of distinct
vertices in $V(G)$. If the induced type 
$(G[\{x_1,\dots,x_k\}],x_1,\dots,x_k)$ is isomorphic to some $\tau_i$, then
we add
$\B p Q^{\tau_i}\B p^T$ to
$\sigma_2$, where
 \begin{equation}\label{eq:x}
 \B p=\big(\pp(F_1^{\tau_i},(G,x_1,\dots,x_k)),
\dots,\pp(F_{g_i}^{\tau_i},(G,x_1,\dots,x_k))\big).
 \end{equation}
 Since each $Q^{\tau_i}$ is positive semi-definite, we have that $\B p
Q^{\tau_i}\B p^T\ge 0$. Thus $\sigma_2$ is non-negative.

Let us take some type $\tau$ on $[k]$ and two $\tau$-flags $F_1$ and $F_2$
with respectively $\ell_1$ and $\ell_2$ vertices. Let $\ell=\ell_1+\ell_2-k$.
Consider the sum
 \begin{equation}\label{eq:prod}
 \sum_{x_1,\dots,x_k} \pp(F_1,(G,x_1,\dots,x_k))\, \pp(F_2,(G,x_1,\dots,x_k))
 \end{equation} 
 over
all choices of $k$-tuples $(x_1, \ldots x_k)$ that induce a copy of $\tau$ in $G$. Each term $\pp(F_i,(G,x_1,\dots,x_k))$ in (\ref{eq:prod}) can be
expanded as the sum over $\ell_i$-sets $X_i$ with
$\{x_1,\dots,x_k\}\subseteq X_i \subseteq V(G)$ of the indicator
function that $X_i$ induces a $\tau$-flag isomorphic to $F_i$. Ignoring the
choices
when $X_1$ and $X_2$ intersect outside of $\{x_1,\dots,x_k\}$, the remaining
terms can be generated by choosing an $\ell$-set $X=X_1\cup X_2$ first, then
distinct
$x_1,\dots,x_k\in X$ to form $X_1 \cap X_2$, and finally splitting the remaining vertices of $X$ between $X_1$ and $X_2$ so that $\vert X_i\vert=\ell_i$. Clearly, the terms that
we ignore contribute at most $O(n^{\ell-1})$ in total. Also, the contribution
of each $\ell$-set $X$ depends only on the isomorphism class of $G[X]$.
Thus the sum
in~(\ref{eq:prod})
can be written as an explicit
linear combination of the subgraph counts $\pp(H,G)$, where $H$ runs
over unlabelled $3$-graphs with $\ell$ vertices, modulo
an additive error term $O(n^{\ell-1})$. An explicit formula for computing
this linear combination can be found in e.g.~\cite[Lemma~2.3]{Razborov07}. 
%(The operation in (\ref{eq:prod}) corresponds to $[\![ F_1F_2]\!]_\tau$
%in Razborov's notation.)

Thus if we expand
each quadratic form $\B p Q^{\tau_i}\B p^T$ and take the 
sum over all suitable $x_1,\dots,x_k\in V(G)$, where $k=\vert V(\tau_i)\vert$, then we 
obtain a (fixed) linear combination of $\pp(G_1,G),\dots,\pp(G_{426},G)$
with an additive error term of $O(n^5)$. The analogous claim holds for each term
in the right-hand side of (\ref{eq:b}). Thus both $\sigma_1$ and $\sigma_2$ can be
represented in this form, that is, 
 \begin{equation}\label{eq:a+b}
 \sigma_1+\sigma_2 = \sum_{i=1}^{426} \alpha_i \pp(G_i,G) + O(n^5),
 \end{equation}
 where each $\alpha_i$ is a rational number that does not depend on $n$
and that can be computed given the 
information above (namely the matrices $Q^{\tau_j}$ and the coefficients $c_j$). An explicit formula
for
$\alpha_i$ is rather messy, so we do not state it.

The crucial properties that our certificate possesses is that each $\alpha_i$ is non-positive and that $c_2>0$ for the $\tau_2$-flag \flag{5:123(2)} (listed as $M_2$ in \jSection{axiom\_flags}), which in \emph{Flagmatic} notation denotes the 5-vertex 3-graph with one 3-edge and two vertices of that 3-edge labelled. These properties (involving rational numbers) can be verified by the scripts that come with \emph{Flagmatic} and use exact arithmetic. Explicitly, the $\alpha_i$ are stored in an array by \emph{Flagmatic}, called \texttt{problem.\_bounds}. Asking sage to list all strictly positive elements in that array returns the empty set. As for the value of $c_2$, this can be read out by using the \texttt{varproblem} script. We refer the reader to the file \texttt{F32Codegree.sage} that contains such a verification at the end.

Assuming the above properties, we are ready to prove that $\gamma(F_{3,2})\le
\frac13$. Suppose on the contrary that $\gamma(F_{3,2})>1/3+c$ for some $c>0$. 

Let $\varepsilon$ be an arbitrary real with $0<\varepsilon<\frac{1}{20}$, and let $n$ be sufficiently large.
Pick an $F_{3,2}$-free $3$-graph $G$ of order $n$
and minimum codegree at least $(\frac13+c)n$. Given $G$, compute
$\sigma_1$ and $\sigma_2$ as above. We already know that $\sigma_2\ge 0$.
Also, as remarked earlier, the codegree assumption implies
that each summand in (\ref{eq:b}) is non-negative, so that $\sigma_1\ge 0$. 

\begin{lemma}\label{lm:cj>0} Let $j\in[154]$ be such that $c_j>0$. 
%Let $M_j=(M_j^0,x_1,x_2)$; thus
Write $M_j^0$ for the unlabelled version of $M_j$. Then $\pp(M_j^0,G)< \varepsilon
{n\choose 5}$.\end{lemma}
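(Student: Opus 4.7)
The plan is to squeeze $\sigma_1$ from both sides: from above via~(\ref{eq:a+b}) and the non-positivity of the $\alpha_i$, and from below using the assumption that the codegree of $G$ strictly exceeds $n/3$.

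First I would obtain the upper bound. By~(\ref{eq:a+b}) we have
\[
\sigma_1+\sigma_2 \;=\; \sum_{i=1}^{426}\alpha_i\,\pp(G_i,G)+O(n^5).
\]
Since each $\alpha_i\le 0$ (this is the key verified property of the certificate) and every subgraph count $\pp(G_i,G)$ is non-negative, the first term on the right is $\le 0$. Combined with $\sigma_1,\sigma_2\ge 0$, this gives $0\le \sigma_1\le K n^5$ for some absolute constant $K$ that depends only on the data contained in the certificate (and not on $G$ or $n$).

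Next I would obtain a lower bound on $\sigma_1$ in terms of $\pp(M_j^0,G)$. Every ordered pair $(x_1,x_2)$ of distinct vertices of $G$ satisfies
\[
\pp\big((K_3^3,x_1,x_2),(G,x_1,x_2)\big)-\tfrac{n}{3}\;\ge\; cn,
\]
by our codegree hypothesis. Moreover every $c_i$ is non-negative, so dropping all terms in the inner sum except the $j$-th yields
\[
\sigma_1 \;\ge\; c n\, c_j \sum_{x_1,x_2} \pp\big(M_j,(G,x_1,x_2)\big).
\]
The remaining sum counts triples $(x_1,x_2,X)$ where $X$ is a $5$-set containing $\{x_1,x_2\}$ and $(G[X],x_1,x_2)\cong M_j$. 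Grouping these triples by $X$: for each $5$-subset $X$ of $V(G)$ with $G[X]\cong M_j^0$, the number of ordered pairs of roots in $X$ producing a flag isomorphic to $M_j$ is a fixed positive integer $a_j$ depending only on $M_j$ (essentially $|V(\tau_2)|!/|\mathrm{Aut}(M_j)|$ times the number of $\mathrm{Aut}(M_j^0)$-orbits of such labellings). For any other $X$, the contribution is zero. Thus
\[
\sum_{x_1,x_2}\pp\big(M_j,(G,x_1,x_2)\big) \;=\; a_j\,\pp(M_j^0,G),
\]
so $\sigma_1\ge c\,c_j\,a_j\, n\,\pp(M_j^0,G)$.

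Combining the two bounds,
\[
\pp(M_j^0,G)\;\le\;\frac{K\, n^5}{c\,c_j\,a_j\,n}\;=\;\frac{K}{c\,c_j\,a_j}\,n^4,
\]
which is $O(n^4)=o\!\left(\binom{n}{5}\right)$, and hence is smaller than $\varepsilon\binom{n}{5}$ once $n$ is large enough (depending on $\varepsilon$, $c$, $c_j$, $a_j$, $K$). The main obstacle is really just the bookkeeping in the second step, namely verifying that the passage from labelled flag counts to the unlabelled count $\pp(M_j^0,G)$ introduces only a positive multiplicative constant $a_j$; once that is set up, the squeeze argument is immediate.
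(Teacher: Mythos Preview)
Your proof is correct and follows the same underlying squeeze as the paper: bound $\sigma_1$ above by $O(n^5)$ via~(\ref{eq:a+b}) and the non-positivity of the $\alpha_i$, and bound it below by the codegree excess $cn$ times the $M_j$-flag count. The only difference is in the bookkeeping of the lower bound. The paper argues by contradiction and averaging: assuming $\pp(M_j^0,G)\ge\varepsilon\binom{n}{5}$, it finds $\varepsilon^2\binom{n}{2}$ pairs $(x_1,x_2)$ each contributing at least $cn\cdot c_j\varepsilon^2\binom{n-2}{3}$ to $\sigma_1$, forcing $\sigma_1=\Omega(n^6)$. You instead sum directly over all pairs and recognise $\sum_{x_1,x_2}\pp(M_j,(G,x_1,x_2))=a_j\pp(M_j^0,G)$, which is cleaner and in fact yields the sharper conclusion $\pp(M_j^0,G)=O(n^4)$ rather than merely $<\varepsilon\binom{n}{5}$. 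Your observation that the labelled-to-unlabelled conversion costs only a fixed positive factor $a_j$ is exactly right and is the one point the paper's averaging step circumvents.
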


\begin{proof} 

Let us derive a contradiction from assuming that $\pp(M_j^0,G)\ge \varepsilon
{n\choose 5}$.
For each $5$-set $X\subseteq V(G)$
that induces $M_j^0$, choose $x_1,x_2\in X$ such that the induced
$\tau_2$-flag $(G[X],x_1,x_2)$ is
isomorphic to $M_j$. The number of pairs $(x_1,x_2)$ that appear
for at least $\varepsilon^2 {n-2\choose 3}$ different choices of $X$ is at least
$\varepsilon^2{n\choose 2}$: indeed, otherwise the number of sets $X$ as above is at most 
 $$
 \varepsilon^2{n\choose 2} \times {n\choose 3}+ {n\choose 2}\times \varepsilon^2 {n-2\choose 3}
 <\varepsilon{n\choose 5}
 $$
for $n$ sufficiently large (since $\varepsilon < \frac{1}{20}$), a contradiction. Each of these $\varepsilon^2{n\choose 2}$ pairs $(x_1,x_2)$ contributes
at least $cn\times c_j\varepsilon^2 {n-2\choose 3}$ to (\ref{eq:b}). 
Thus $\sigma_1=\Omega(n^6)$, which contradicts (\ref{eq:a+b}). 
(Recall that $\sigma_2\ge 0$ while each $\alpha_j\le 0$.)
\end{proof}

Since $\varepsilon>0$ was arbitrary it follows that our hypothetical
counterexample $G$ satisfies $\pp(M_j^0,G)=o(n^5)$ for each $j\in [154]$
with $c_j>0$. In particular, $\pp(H,G)=o(n^5)$, where $H$ is the 5-vertex
3-graph with exactly one edge.

We now use the \emph{random sparsification} trick, as in \cite[Section 4.3]{glebov+kral+volec}. Namely, fix $p$ with $0<p<\min \left(\frac{c}{4}, \frac{1}{2}\right)$ and let $G'$ be obtained from $G$ by deleting each edge with probability $p$. Then it is not hard to show (cf Lemma~\ref{claim:takesmallerconditional}) that with high probability, $\delta_2(G')\ge (1/3+c-2p)n>(1/3+c/2)n$. We know that $G'$ is $F_{3,2}$-free (since $G$ is). Also, as $\vert E(G) \vert =\Omega(n^3)$, 
%(it is at least $\frac n3{n\choose 2}/3$), 
$G$ has $\Omega(n^5)$ $5$-sets that span at least one edge. Each such set produces a copy of $H$ in $G'$ with probability at least $p^{{5\choose 3}}$, which is small but strictly positive. In particular, with high probability $\pp(H,G')=\Omega(n^5)$: a typical outcome $G'$ leads to a contradiction. Thus $\gamma(F_{3,2})\le \frac13$ as claimed. \qed

\subsection{Generating the certificate}

Although we have formally verified that $\gamma(F_{3,2})\le \frac13$, let us
briefly describe the steps that led to the certificate. As we already mentioned, the ancillary folder
of~\cite{falgasravry+marchant+pikhurko+vaughan:arxiv} also contains the
flagmatic code \texttt{F32Codegree.sage} that generated it as well as the
transcript of the whole session (file \texttt{F32Codegree.txt}).

The method of using positive semi-definite matrices
$Q^{\tau_i}$ to obtain inequalities between subgraph densities is fairly
standard by now and has been used for a number of other problems. The new
ingredient is the (rather obvious) idea to use~(\ref{eq:b}) for
deriving consequences of the codegree assumption $\delta_2(G)\ge \frac13 n$, namely that $\sigma_1\ge 0$ for any choice of non-negative coefficients $c_i$. The verification that 
each $\alpha_i$ can be made non-positive can be done
via semi-definite programming. More specifically, one can create an unknown
block-diagonal matrix $X\succeq 0$ whose blocks are
$Q^{\tau_1},\dots,Q^{\tau_7}$, followed by $c_1,\dots,c_{154}$ as diagonal
entries. Also, we added the extra restriction
$c_1+\dots+c_{154}=1$, to avoid the trivial solution when all unknowns are
zero. This is done
automatically by the function \texttt{make\_codegree\_problem}. The full
support of general `axioms' (such as the codegree
assumption) is not implemented in Version 2.0 of
\emph{Flagmatic}. Hopefully, this will be done in future releases.

The choice $N=6$ came from experimenting with the above approach (as $N=5$
was not enough). Our experiments also suggested that the types $\tau_1$ (empty
vertex set) and $\tau_5$ (two 3-edges on 4 vertices) are not
really needed, that is, we can let $Q^{\tau_1}$ and $Q^{\tau_5}$ be the zero
matrices (thus making the rounding step easier as we will have fewer
parameters). This was done by the command \texttt{set\_inactive\_types}.

A crucial observation for the rounding procedure is that any flag algebra proof as above has to satisfy some relations. Namely, if we run our flag algebra argument
on an almost extremal example $G=T_{V_1,V_2,V_3}$ with $\vert V_i\vert=n/3$, then all the inequalities we obtain are
tight up to an $O(n^5)$ additive error. This has a number of consequences.

Call a $3$-graph $G_i$ of order $6$ \emph{sharp} if $\alpha_i=0$. The following lemma tells us a number of graphs must necessarily be sharp.
\begin{lemma}\label{lm:Sharp}
If a $6$-vertex $3$-graph $G_i$ is isomorphic to an induced subgraph of some $T_{A,B,C}$ construction,
then $G_i$ is sharp.
\end{lemma}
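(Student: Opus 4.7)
The plan is to evaluate the master identity~(\ref{eq:a+b}) at a large balanced extremal construction and use the resulting tightness to force $\alpha_i=0$. Fix $n$ divisible by $3$, let $V_1, V_2, V_3$ be disjoint sets of size $n/3$, and set $T = T_{V_1, V_2, V_3}$. Direct inspection of Construction~\ref{orcyclebip} shows that every within-class pair of distinct vertices of $T$ has codegree exactly $n/3$, while every cross-class pair has codegree exactly $n/3-1$; in particular $\bigl|\,\pp(K_3^3, (T, x_1, x_2)) - n/3\,\bigr|\le 1$ for every ordered pair of distinct vertices of $T$.

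First, I would estimate $\sigma_1$ for the choice $G=T$. Each of the $n(n-1)$ summands in the definition~(\ref{eq:b}) of $\sigma_1$ is the product of the bounded factor $\pp(K_3^3, (T, x_1, x_2)) - n/3 = O(1)$ with $\sum_{j=1}^{154} c_j\,\pp(M_j, (T, x_1, x_2))$; the latter is $O(n^3)$, since each $\pp(M_j,\cdot)$ counts at most $\binom{n-2}{3}$ extensions. Hence $\sigma_1 = O(n^5)$. Using $\sigma_1, \sigma_2 \ge 0$, $\alpha_i \le 0$ and $\pp(G_i,T) \ge 0$ for every $i$, identity~(\ref{eq:a+b}) specialised at $G=T$ rearranges to
\[
0 \;\le\; \sum_{i=1}^{426} |\alpha_i|\, \pp(G_i, T) \;=\; -(\sigma_1+\sigma_2) + O(n^5) \;\le\; O(n^5).
\]

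Next, suppose $G_i$ is isomorphic to the subgraph of some $T_{A,B,C}$ induced on a 6-set $S$ with profile $(|S \cap A|, |S\cap B|, |S \cap C|) = (a_1, a_2, a_3)$. The same profile taken inside $T$ (which is possible since $n/3 \ge 6 \ge \max_j a_j$) induces an isomorphic copy of $G_i$, and there are $\binom{n/3}{a_1}\binom{n/3}{a_2}\binom{n/3}{a_3} = \Theta(n^6)$ such 6-sets. Hence $\pp(G_i, T) = \Theta(n^6)$. Combining this with the preceding bound yields $|\alpha_i|\cdot \Theta(n^6) \le O(n^5)$, so that $|\alpha_i| = O(1/n)$. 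Since $\alpha_i$ is a fixed rational number independent of $n$, we conclude $\alpha_i = 0$.

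The main technical nuisance I anticipate is computational rather than conceptual: making the implied constants in $\sigma_1 = O(n^5)$ and in the error term of~(\ref{eq:a+b}) explicit enough to survive the division by $\pp(G_i, T) = \Theta(n^6)$ uniformly in $n$. Unpacking the standard product formula for flag densities (cf.~\cite[Lemma~2.3]{Razborov07}) should handle this, but it is essentially the only spot where the argument could go wrong if done carelessly.
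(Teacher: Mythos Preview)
Your argument is essentially the same as the paper's: evaluate~(\ref{eq:a+b}) on a large balanced $T_{V_1,V_2,V_3}$, bound $\sigma_1+\sigma_2$ from below by $-O(n^5)$, and use $\pp(G_i,T)=\Theta(n^6)$ together with $\alpha_i\le 0$ and the $n$-independence of $\alpha_i$ to force $\alpha_i=0$. One small slip: you write ``using $\sigma_1,\sigma_2\ge 0$'', but $\sigma_1\ge 0$ is not true for $T$ (cross-class pairs have codegree $n/3-1<n/3$, so their summands are negative); what you actually need---and have already established---is $|\sigma_1|=O(n^5)$, which together with $\sigma_2\ge 0$ gives $-(\sigma_1+\sigma_2)\le O(n^5)$ and the displayed inequality goes through.
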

\begin{proof} 
Let $G$ be a balanced $T_{A,B,C}$ construction on $n$ vertices. Since $G_i$ is an induced $6$-vertex subgraph of a $T_{A,B,C}$ construction, it readily follows that $P(G_i, G)=\Omega(n^6)$. Now the minimum codegree in $G$ is at least $n/3-2$, whence $\sigma_1(G)\geq -O(n^5)$. By definition, $\sigma_2(G)\geq 0$. Thus we have $\sigma_1(G)+\sigma_2(G)\geq -O(n^5)$. Since $\alpha_j \leq 0$ for all $j \in [426]$, equality (\ref{eq:a+b}) then implies that $-O(n^5) \leq \alpha_i P(G_i, G)$. As $P(G_i, G)=\Omega(n^6)$, we must have $\alpha_i =0$, as claimed.
\end{proof} 

\begin{lemma}\label{lm:Eigenvectors} Let $\tau_i$ be a type on $k\in \{0,2,4\}$ vertices $x_1,\dots,x_k$ which appears as an induced subgraph in a $T_{A,B,C}$ construction. 

Form $\B p$ as in (\ref{eq:x}), with $G$ a balanced $T_{A,B,C}$ construction on $n$ vertices, and write $\| \B p \|$ for its $\ell_2$ norm. Then the limit of $\B p/\|\B p\|$ as $n\to\infty$ is
a zero eigenvector of $Q^{\tau_i}$. 
\end{lemma}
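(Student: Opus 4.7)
The plan is to exploit the near-tightness of equality (\ref{eq:a+b}) on balanced $T_{A,B,C}$ constructions: since $\sigma_1,\sigma_2\ge 0$ and each $\alpha_i\le 0$, tightness up to an additive $O(n^5)$ error forces every non-negative summand in $\sigma_2$ to be correspondingly small, and after normalization this places the limit vector in $\ker Q^{\tau_i}$.

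First, for a balanced $T_{A,B,C}$ construction $G$ on $n$ vertices I would show that $\sigma_1(G),\sigma_2(G)=O(n^5)$. A direct codegree calculation in $T_{A,B,C}$ gives $\pp((K_3^3,x_1,x_2),(G,x_1,x_2))-n/3=O(1)$ uniformly in $(x_1,x_2)$, while $\sum_i c_i\, \pp(M_i,(G,x_1,x_2))=O(n^3)$, so (\ref{eq:b}) yields $\sigma_1=O(n^5)$. Meanwhile, Lemma~\ref{lm:Sharp} gives $\alpha_i=0$ for every 6-vertex induced subgraph of $T_{A,B,C}$, and $\pp(G_i,G)=0$ for all the remaining $G_i$. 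Hence $\sum_i\alpha_i\,\pp(G_i,G)=0$, and (\ref{eq:a+b}) forces $\sigma_2(G)=O(n^5)$.

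Next, I would isolate the contribution to $\sigma_2$ of a fixed type $\tau_i$ and a fixed \emph{placement pattern}, that is, a specification of which part of $A$, $B$ or $C$ contains each of $x_1,\ldots,x_k$ consistently with $(G[\{x_1,\ldots,x_k\}],x_1,\ldots,x_k)\cong \tau_i$. By the vertex-transitivity of the balanced construction within each part, for all $\Theta(n^k)$ tuples matching such a pattern the vector $\B{p}$ coincides with a common vector $\B{p}_0$ of norm $\Theta(n^{(6-k)/2})$ up to coordinate-wise error $O(n^{(6-k)/2-1})$. Expanding $\B{p}\,Q^{\tau_i}\,\B{p}^T=\B{p}_0\,Q^{\tau_i}\,\B{p}_0^T+O(n^{5-k})$ and summing over the $\Theta(n^k)$ tuples, the non-negativity of each term combined with $\sigma_2=O(n^5)$ yields $\B{p}_0\,Q^{\tau_i}\,\B{p}_0^T=O(n^{5-k})$, and therefore $\B{p}\,Q^{\tau_i}\,\B{p}^T=O(n^{5-k})$ for every such tuple.

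Since $\|\B{p}\|^2=\Theta(n^{6-k})$, normalizing gives $(\B{p}/\|\B{p}\|)\,Q^{\tau_i}\,(\B{p}/\|\B{p}\|)^T=O(1/n)\to 0$. Each coordinate of $\B{p}$ is asymptotically a polynomial in $n$ determined by the placement pattern, so after normalization the coordinates converge to definite limits and $\B{p}/\|\B{p}\|\to \B{u}$ for some unit vector $\B{u}$; passing to the limit gives $\B{u}\,Q^{\tau_i}\,\B{u}^T=0$, and positive semi-definiteness of $Q^{\tau_i}$ forces $\B{u}\in\ker Q^{\tau_i}$, as claimed. The main obstacle is the careful bookkeeping of lower-order errors, both in expanding $\B{p}\,Q^{\tau_i}\,\B{p}^T$ around $\B{p}_0\,Q^{\tau_i}\,\B{p}_0^T$ and in passing between counts for specific tuples and counts summed over a placement pattern.
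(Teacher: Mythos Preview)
Your proposal is correct and follows essentially the same route as the paper: bound $\sigma_1$ via the codegree calculation, use Lemma~\ref{lm:Sharp} to kill the right-hand side of (\ref{eq:a+b}) and hence bound $\sigma_2$, then restrict to tuples of a fixed placement pattern (the paper's $\boldsymbol{\epsilon}$-sequences) and divide through by $\|\B p\|^2$. One simplification you missed: for a fixed placement pattern the vectors $\B p$ are in fact \emph{exactly} equal (the automorphism group of $T_{A,B,C}$ acts transitively on ordered $k$-tuples with a prescribed pattern), so your bookkeeping of the error $\B p-\B p_0$ is unnecessary, though harmless.
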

 \begin{proof} 
 Let $G$ be a balanced $T_{V_1,V_2,V_3}$ construction on $n$ vertices. The codegrees of pairs from  $V(G)$ vary between $\lfloor n/3\rfloor -1$ and $\lceil n/3 \rceil$, so that $\vert \sigma_1(G) \vert =O(n^5)$. Now, for all $G_i$ which are $6$-vertex subgraphs of $G$ we have by Lemma~\ref{lm:Sharp} above that $\alpha_i=0$, while for all other $6$-vertex $3$-graphs $G_i$ we have $P(G_i, G)=0$. Equality (\ref{eq:a+b}) thus tells us that $O(n^5)+\sigma_2(G)=O(n^5)$, whence we deduce that $\sigma_2(G)=O(n^5)$.

Now, for each $k \in \{0,2,4\}$ there are $3^k$ sequences $\boldsymbol{\epsilon} =(\epsilon_1, \epsilon_2, \ldots \epsilon_k)$ with $\epsilon_i \in \{1,2,3\}$. Call a sequence of vertices $(x_1, \ldots x_k)$ an \emph{$\boldsymbol{\epsilon}$-sequence} if $x_i \in V_{\epsilon_i}$ for every $i$. For every $\boldsymbol{\epsilon} \in \{1,2,3\}^k$ there exists a unique type $\tau_i$ (which, obviously, embeds into $T_{A,B,C}$ constructions) such that for every $\boldsymbol{\epsilon}$-sequence $(x_1, \ldots x_k)$, $(G[\{x_1, \ldots x_k\}], x_1, \ldots x_k)$ is isomorphic to $\tau_i$. What is more, for every such $\boldsymbol{\epsilon}$-sequence the vector $\B p$ formed as in (\ref{eq:x}) is identical (depends on $\boldsymbol{\epsilon}$ but not on the choice of the $x_i$).

Fix $\boldsymbol{\epsilon} \in \{1,2,3\}^k$. By the non-negativity of the summands contributing to $\sigma_2(G)$, we deduce that the sum of $ \B p Q^{\tau_i} \B p^T$ over all $\boldsymbol{\epsilon}$-sequences is at most $O(n^5)$. Now this latter sum consists of $\Omega(n^k)$ identical terms, and $\| \B p\|=\Omega(n^{3-\frac{k}{2}})$. It follows that 
\begin{align*}
0 \leq \frac{\B p}{\|\B p\|}Q^{\tau_i} \frac{\B p^T}{\|\B p\|} &=\B p Q^{\tau_i} \B p^T \times O(n^{k-6})\\
& \leq O\left(\frac{\sigma_2(G)}{n^k}\right)\times O(n^{k-6})\\
&= O(n^{-1})=o(1).
\end{align*}
It is straightforward to see that for each $\boldsymbol{\epsilon} \in \{1,2,3\}^k$, the (unique) vector $\B p / \| \B p \|$ which can be formed from $\boldsymbol{\epsilon}$-sequences converges to a limit as $n\rightarrow \infty$. It follows from the inequality above and the positive semi-definiteness of $Q^{\tau_i}$ that this limit is a zero eigenvector of $Q^{\tau_i}$, as claimed. \end{proof}

In addition to the above, some further `forced' identities can be derived.
\begin{lemma}\label{lm:PhamtomSharp} Let $T'$ be obtained from  a $T_{V_1,V_2,V_3}$ construction with $\vert V_i \vert\ge 6$ for each $i$ by adding an extra `tripartite' $3$-edge $\{u_1,u_2,u_3\}$ with $u_i \in V_i$.
If a $6$-vertex $3$-graph $G_i$  is isomorphic to an induced subgraph of $T'$,
then %necessarily $\alpha_i=0$.
$G_i$ is sharp.
 \end{lemma}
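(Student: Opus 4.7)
My plan is to mirror the proof of Lemma~\ref{lm:Sharp}, but with a richer witness construction that itself contains copies of the extra tripartite edge (and hence of the relevant $G_i$). The first observation is that if a $6$-set $X\subseteq V(T')$ with $T'[X]\cong G_i$ does not contain all of $\{u_1,u_2,u_3\}$, then $T'[X]=T_{V_1,V_2,V_3}[X]$, so $G_i$ is already an induced subgraph of a $T_{A,B,C}$-construction and Lemma~\ref{lm:Sharp} applies. Hence I only need to handle the case $X\supseteq\{u_1,u_2,u_3\}$, where $G_i$ consists of an induced $T_{A,B,C}$-subgraph on six vertices together with the single tripartite $3$-edge $\{u_1,u_2,u_3\}$.

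For this case I would take as witness a balanced $T_{V_1,V_2,V_3}$ on $n$ vertices, enriched by a family $M$ of tripartite $3$-edges drawn from a Latin square $L:V_1\times V_2\to V_3$ (so $|M|=\lfloor n/3\rfloor^2=\Theta(n^2)$). Any two members of $M$ share at most one vertex, so the link-graph analysis sketched just before the statement of Lemma~\ref{lm:PhamtomSharp} extends directly and shows that $G_n=T_{V_1,V_2,V_3}\cup M$ is still $F_{3,2}$-free. By the Latin-square property every cross-part pair is in exactly one element of $M$, so every codegree of $G_n$ equals $\lfloor n/3\rfloor$. Consequently $\sigma_1(G_n)\ge 0$ with $|\sigma_1(G_n)|=O(n^4)$, and equation~(\ref{eq:a+b}) combined with $\sigma_2(G_n)\ge 0$ and $\alpha_j\le 0$ yields $\sum_j |\alpha_j|\,P(G_j,G_n)=O(n^5)$, exactly as in the proof of Lemma~\ref{lm:Sharp}.

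The final step is to count $P(G_i,G_n)$ from below. Each choice of $e\in M$ to play the role of $\{u_1,u_2,u_3\}$ together with three further vertices whose part-distribution matches that of $X\setminus\{u_1,u_2,u_3\}$ yields a candidate induced copy of $G_i$; a short inclusion-exclusion removes the $O(n^4)$ choices in which an extra member of $M$ happens to lie inside the 6-set, giving $P(G_i,G_n)=\Theta(|M|\,n^3)=\Theta(n^5)$.

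The main obstacle, and it is a genuine one, is that this $\Theta(n^5)$ count is exactly the order of the error term in~(\ref{eq:a+b}), so the direct analogue of Lemma~\ref{lm:Sharp} only yields $|\alpha_i|=O(1)$, not $\alpha_i=0$. To pass from $O(1)$ to $0$ I would complement the above with the eigenvector mechanism of Lemma~\ref{lm:Eigenvectors}: the types $\tau_i$ appearing at `$u$-involving' sequences $(u_1,u_2)$, $(u_1,u_2,v,w)$ etc.\ in the $T'$-structure give rise, via the $\sigma_2=O(n^5)$ bound and the same averaging as in Lemma~\ref{lm:Eigenvectors}, to zero eigenvectors of the matrices $Q^{\tau_i}$ beyond those coming from $T_{A,B,C}$-sequences alone. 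Combining these extra kernel directions with the PSD constraint pins down enough of the quadratic form that the contribution of $G_i$ to the right-hand side of~(\ref{eq:a+b}) is forced into the $O(n^5)$ error, thereby giving $\alpha_i=0$. Making this eigenvector-plus-counting hybrid precise --- in particular, checking that the new kernel directions interact correctly with the coefficient of $P(G_i,G_n)$ --- is the hardest part of the proof.
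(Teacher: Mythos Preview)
You have correctly identified the obstacle: your Latin-square witness $G_n$ yields only $P(G_i,G_n)=\Theta(n^5)$ copies, exactly matching the $O(n^5)$ error in~(\ref{eq:a+b}), so the argument stalls at $|\alpha_i|=O(1)$. Your proposed rescue via extra zero eigenvectors is not a proof: the mechanism of Lemma~\ref{lm:Eigenvectors} constrains the matrices $Q^{\tau_j}$, not the scalars $\alpha_i$, and you give no concrete link between ``extra kernel directions'' and the vanishing of the particular coefficient $\alpha_i$. This is the genuine gap.

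The paper's argument sidesteps the obstacle entirely by introducing a free parameter and, crucially, by \emph{abandoning the requirement that the test graph be $F_{3,2}$-free}. One takes $G$ to be the balanced $T_{V_1,V_2,V_3}$ together with the \emph{complete} tripartite $3$-graph on subsets $U_i\subseteq V_i$ of size $\varepsilon n$. The formulas defining $\sigma_1,\sigma_2$ still make sense and are still nonnegative; the expansion~(\ref{eq:a+b}) now acquires extra terms $\sum_H\beta_H P(H,G)$ over non-admissible $6$-vertex $H$, but each such $H$ contains an $F_{3,2}$, hence needs at least two added edges, hence at least four vertices in $U$, giving $P(H,G)=O(\varepsilon^4 n^6)$. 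On the other hand $P(G_i,G)=\Omega(\varepsilon^3 n^6)$, since an induced copy needs only three vertices in $U$. Letting $\varepsilon\to 0$ separates the orders ($\varepsilon^3$ versus $\varepsilon^4$ and the fixed $O(n^5)$), forcing $\alpha_i=0$. The point you missed is that nothing prevents one from plugging a forbidden graph into the flag-algebra identity; only the \emph{inequalities} $\sigma_1,\sigma_2\ge 0$ need to survive, and they do.
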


\begin{proof} 
We may assume that $G_i$ contains the tripartite $3$-edge $\{u_1,u_2,u_3\}$, for otherwise it is isomorphic to an induced subgraph of $T_{V_1,V_2, V_3}$ and we are done by Lemma~\ref{lm:Sharp}.

Now, let
$G$ be obtained from $T_{V_1,V_2,V_3}$ with $\vert V_1\vert=\vert V_2 \vert =\vert V_3 \vert =n/3$ by adding
the complete $3$-partite $3$-graph with parts $U_1\cup U_2\cup U_3$, where 
$U_i\subseteq V_i$ has size $\e n$ for some small $\e>0$. This $3$-graph is
not $F_{3,2}$-free but nothing prevents us from computing $\sigma_1$ and
$\sigma_2$ (which are still nonnegative) using the same formulae as before. When we expand
$\sigma_1+\sigma_2$ as in (\ref{eq:a+b}), the coefficients
$\alpha_1,\dots,\alpha_{426}$ will be the same but we will have an extra sum
$\sum_H \beta_H\pp(H,G)$ where $H$ runs over $6$-vertex $3$-graphs, each
containing a copy of $F_{3,2}$. While we have no control over the sign of
each $\beta_H$, we know that they are constants independent of $n$. Also, we have $\pp(H,G)\le (3\e)^4 n^6$. (Indeed, 
each $H$-subgraph of $G$ has to use at least 4 vertices
from $U=U_1\cup U_2\cup U_3$ because each copy of $F_{3,2}\subseteq G$ uses
at least two added edges.)

Since $\e$ can be arbitrarily small, 
the terms of order $O(\e^3n^6)$ in the new version of (\ref{eq:a+b}) should
have correct signs to avoid a contradiction. (There are no new terms of order $\e
n^6$ or $\e^2 n^6$, as we need to hit at least three
vertices of $U$ to detect an added 3-edge.) For our $G_i$, we have that
$\pp(G_i,G)=\Omega(\e^3n^6)$. Indeed, take an arbitrary embedding $f: V(G_i)\to V(G)$ and modify it to obtain an embedding $f'$ such that for every $x\in V(G_i)$, $f'(x),f(x)$ are always in the same part $V_i$ and $f'(x)\in U_i$ if and only if $f(x)\in U_i$. The resulting map $f': V(G_i)\to V(G)$ gives us another embedding of $G_i$ into $G$. Clearly, there are at least $(1-o(1))(\e n)^3 (n/3)^3$ possible ways to choose $f'$. Thus necessarily $\alpha_i=0$ (otherwise we would violate the non-negativity of $\sigma_1+\sigma_2$), and $G_i$ is sharp as claimed.\end{proof}

We call the additional $3$-edge $\{u_1,u_2,u_3\}$ in Lemma~\ref{lm:PhamtomSharp} a \emph{phantom edge}. Such edges can appear in an extremal configuration but with density $o(1)$. Although sparse, they also force further sharp graphs
as shown in Lemma~\ref{lm:PhamtomSharp}. Similarly it can be shown that they force some further zero
eigenvectors in addition to those given by Lemma~\ref{lm:Eigenvectors}.

This phenomenon was
first observed in~\cite[Section~3.4]{pikhurko+vaughan:Fkl:arxiv}. A new idea
here is that the `test' 3-graph $G$ in the proof of
Lemma~\ref{lm:PhamtomSharp} is not admissible.

The option
\texttt{phantom\_edge} (new in \emph{Flagmatic 2.0}) tells the computer to use these extra identities at the rounding step. 

There happened to be some further zero eigenvectors in addition to those given
by the observations above. Here we just guessed their values by inspecting the floating point
solution and passed the information on to \emph{Flagmatic} using its 
\verb=add_zero_eigenvectors= function.

\subsection{Stability}
In this section we prove Theorem~\ref{f32stability}. Let $G$ be an arbitrary $F_{3,2}$-free 3-graph on $[n]$ with minimum codegree 
$(1/3+o(1))n$. We shall use the information from our flag algebraic proof of Theorem~\ref{f32density} to establish that $G$ lies within edit distance $o(n^3)$ of a balanced $T_{A,B,C}$ construction. First, let us show that almost all $6$-vertex subgraphs of $G$ are sharp $3$-graphs.

\begin{lemma}\label{lm:NonSharpInG} If a $6$-vertex $3$-graph $G_i$ is not sharp, then
$P(G_i,G)=o(n^6)$. 
\end{lemma}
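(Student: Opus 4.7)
The plan is to exploit the main flag-algebra identity~(\ref{eq:a+b}) together with the near-tightness of the codegree hypothesis and the non-positivity of the coefficients $\alpha_i$ encoded in the certificate. Since $\alpha_i<0$ for a non-sharp $G_i$, any uniform lower bound on $\sigma_1+\sigma_2$ will immediately translate into an upper bound on $P(G_i,G)$, so the whole task reduces to producing such a lower bound.

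First I would bound $\sigma_1$ below. By hypothesis $\delta_2(G)\geq(1/3-\varepsilon_n)n$ for some $\varepsilon_n=o(1)$. Referring to the definition~(\ref{eq:b}), each outer factor $P((K_3^3,x_1,x_2),(G,x_1,x_2))-n/3$ is at least $-\varepsilon_n n$; the inner sum $\sum_j c_j P(M_j,(G,x_1,x_2))$ is trivially $O(n^3)$; and there are $n(n-1)$ ordered pairs $(x_1,x_2)$. Multiplying these bounds together gives $\sigma_1\geq -O(\varepsilon_n n^6)=-o(n^6)$. Positive semi-definiteness of each $Q^{\tau_i}$, guaranteed by the decomposition $Q^{\tau_i}=RQ'R^T$ with $Q'$ diagonal and positive, gives $\sigma_2\geq 0$ as in the original codegree-density argument.

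Plugging both bounds into~(\ref{eq:a+b}) yields $\sum_{i=1}^{426}\alpha_i P(G_i,G)\geq -o(n^6)$. The key certified property is that every $\alpha_i$ is non-positive, so every summand on the left is non-positive; a sum of non-positive reals bounded below by $-o(n^6)$ forces each summand individually to satisfy $\alpha_i P(G_i,G)\geq -o(n^6)$. For a non-sharp $G_i$, $\alpha_i$ is a strictly negative rational constant independent of $n$, so dividing through by $|\alpha_i|$ yields $P(G_i,G)\leq o(n^6)$, as required. There is no serious obstacle here: the argument is essentially bookkeeping, with the only mild care being the lower bound on $\sigma_1$ under the slightly relaxed codegree assumption.
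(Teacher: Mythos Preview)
Your proposal is correct and follows essentially the same approach as the paper: bound $\sigma_1\ge -o(n^6)$ from the codegree hypothesis and non-negativity of the inner sum, use $\sigma_2\ge 0$, and then read off $\alpha_iP(G_i,G)\ge -o(n^6)$ from~(\ref{eq:a+b}) and the non-positivity of all $\alpha_j$. The paper's proof is simply the terse version of what you wrote, asserting $\sigma_1(G)\ge -o(n^6)$ without spelling out the three-factor estimate.
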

 \begin{proof} 
  Since $\delta_2(G)= n/3 +o(n)$, we have $\sigma_1(G) \geq -o(n^6)$. We know that $\sigma_2(G)\geq 0$ and that $\alpha_j\leq 0$ for all $j \in [426]$. Equality (\ref{eq:a+b}) thus implies that 
 $-o(n^6)\leq \alpha_i P(G_i,G)$. Since $G_i$ is not sharp we have $\alpha_i <0$, from which we deduce that $P(G_i,G)=o(n^6)$ as claimed.
 \end{proof}
By applying a version of an Induced Removal Lemma (see \cite{rodl+schacht:09}
for a very strong version as well as a historical account), we can therefore change
$o(n^3)$ edges of $G$ and destroy all induced copies of non-sharp $3$-graphs,
without creating a copy of $F_{3,2}$. Let $G'$ denote the $3$-graph thus obtained; by definition, all of the $6$-vertex subgraphs of $G'$ are sharp $3$-graphs.

Now, the transcript of our flag algebraic proof of Theorem~\ref{f32density} shows that the number of sharp $3$-graphs and the number of
$6$-vertex $3$-graphs that embed into $T_{A,B,C}$ plus a tripartite $3$-edge are both $13$. 
By Lemma~\ref{lm:PhamtomSharp}, these two families of $6$-vertex $3$-graphs must therefore coincide. In fact, it is routine to
check by hand that there are nine $6$-vertex $3$-graphs that can appear in
$T_{A,B,C}$ as induced subgraphs and that by adding one tripartite $3$-edge to
$T_{A,B,C}$ we increase this number by four.

We deduce from this the following:
\begin{lemma}\label{lm:S} Every $6$-vertex set $X\subseteq V(G')$ admits a
partition $X=A\cup B\cup C$ such that $G'[X]$ is $T_{A,B,C}$ with at most one
tripartite 3-edge added.\qed\end{lemma}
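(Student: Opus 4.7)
The plan is simply to combine the three pieces already established: the construction of $G'$, the classification of sharp 6-vertex $3$-graphs coming out of the flag algebra certificate, and the structural Lemma~\ref{lm:PhamtomSharp}.

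First I would observe that, by the choice of $G'$, every induced 6-vertex subgraph $G'[X]$ is a sharp $3$-graph. Indeed, by Lemma~\ref{lm:NonSharpInG} we have $P(G_i,G)=o(n^6)$ for every non-sharp 6-vertex $3$-graph $G_i$, and the induced removal lemma then allows us to remove/add $o(n^3)$ edges to eliminate all induced copies of non-sharp graphs in a way that preserves $F_{3,2}$-freeness. Hence $G'[X]$ must be isomorphic to one of the sharp $3$-graphs enumerated in the flag algebra proof.

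Next I would exploit the numerical coincidence noted just before the lemma. On the one hand, the certificate lists exactly $13$ sharp $3$-graphs on $6$ vertices. On the other hand, a routine hand enumeration of the induced 6-vertex subgraphs of $T_{A,B,C}$ (sorting by the part-sizes $(\vert X\cap A\vert,\vert X\cap B\vert,\vert X\cap C\vert)$ up to permutation) yields exactly $9$ isomorphism classes, and adding a single tripartite 3-edge to such a configuration yields $4$ further classes, for a total of $13$. By Lemma~\ref{lm:PhamtomSharp} each of these $13$ graphs is sharp, so by sheer counting the two families must coincide: the set of sharp 6-vertex $3$-graphs is exactly the set of 6-vertex induced subgraphs of some $T_{A,B,C}$ with at most one additional tripartite $3$-edge.

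Combining these two observations gives the lemma immediately: for any $X\in V(G')^{(6)}$, the sharp $3$-graph $G'[X]$ is isomorphic to $T_{A,B,C}[X']$ (possibly with one extra tripartite edge) for some partition of a 6-set $X'$, and pulling this isomorphism back to $X$ yields the required partition $X=A\cup B\cup C$. The only mildly laborious step is the hand enumeration producing the number $9+4=13$; this is the main obstacle in the sense that if the counts on the two sides did \emph{not} match, one would be forced to do additional flag algebra work (e.g.\ identifying further phantom-type configurations) to close the gap. Here, however, the counts do match and no extra argument is needed.
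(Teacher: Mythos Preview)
Your proposal is correct and follows essentially the same route as the paper: the paper also derives Lemma~\ref{lm:S} directly from the coincidence that the certificate produces exactly $13$ sharp $6$-vertex $3$-graphs while a hand count gives $9+4=13$ induced subgraphs of $T_{A,B,C}$ plus one tripartite edge, so that by Lemma~\ref{lm:PhamtomSharp} the two families coincide. The paper treats this as immediate from the preceding discussion (hence the bare \qed), and your write-up simply makes those steps explicit.
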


By removing $o(n^3)$ edges from $G$, we may have destroyed our minimum codegree condition, but it will still hold on average: at most $o(n^2)$ pairs can have codegree less than
$(1/3+o(1))n$ in $G'$.

Let us now consider the type $\tau_6$ which is a labelling of $K_4^-$. 
\begin{lemma}\label{lm:1} $P(K_4^-,G')=\Omega(n^4)$.\end{lemma}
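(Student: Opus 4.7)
The plan is to lower-bound $P(K_4^-, G')$ by interpreting it as a sum of triangle counts in link graphs, then apply Kruskal--Katona. For each $z\in V(G')$, write $L(z)$ for the link graph of $z$ in $G'$ and $t(L(z))$ for its number of triangles. Each triangle $\{y_1,y_2,y_3\}$ in $L(z)$ witnesses three $3$-edges $zy_1y_2$, $zy_1y_3$, $zy_2y_3$ of $G'$, so the $4$-set $\{z,y_1,y_2,y_3\}$ spans either an induced $K_4^-$ with apex $z$ or an induced $K_4$. Since an induced $K_4^-$ has a unique apex, if $G'$ is $K_4$-free then $P(K_4^-,G')=\sum_{z\in V(G')}t(L(z))$.

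The first step is thus to verify that $G'$ is $K_4$-free, using Lemma~\ref{lm:S}. Given any $4$-subset $X\subseteq V(G')$, extend it to a $6$-subset $Y$ to which Lemma~\ref{lm:S} applies, so that $G'[Y]$ is $T_{A,B,C}$ plus at most one tripartite $3$-edge. A short case analysis on the partition sizes $(\vert X\cap A\vert,\vert X\cap B\vert,\vert X\cap C\vert)$ reveals that $G'[X]$ carries at most three $3$-edges: among all partitions of a $4$-set into three classes, the only ones producing three $T_{A,B,C}$-edges are the cyclic rotations of $(3,1,0)$, and these have an empty part and so admit no tripartite triple to serve as the phantom edge. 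Hence $G'[X]\neq K_4$.

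The second step is a density estimate via Kruskal--Katona. The average-codegree condition on $G'$ noted just before the lemma gives $\vert E(G')\vert\geq\left(\tfrac{1}{18}-o(1)\right)n^3$, so $\sum_{z\in V(G')}d_{G'}(z)=3\vert E(G')\vert=\Omega(n^3)$. For each $z$, Kruskal--Katona provides $t(L(z))\geq\binom{x_z}{3}$ with $\binom{x_z}{2}=d_{G'}(z)$, which for $d_{G'}(z)=\Omega(n^2)$ yields $t(L(z))=\Omega(d_{G'}(z)^{3/2})$. Since $e\mapsto e^{3/2}$ is convex, Jensen's inequality applied to the degree sequence gives
\[
\sum_{z\in V(G')}d_{G'}(z)^{3/2}\geq n\left(\frac{3\vert E(G')\vert}{n}\right)^{3/2}=\Omega(n^4),
\]
from which $P(K_4^-, G')=\Omega(n^4)$ follows. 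The main obstacle is the $K_4$-freeness check in the first step, as this requires carefully enumerating the partitions of a $4$-subset and verifying that no placement of the single phantom edge can complete a $K_4$; once this is done the remainder is routine.
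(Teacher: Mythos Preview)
Your first step is correct and in fact patches a detail the paper leaves implicit: the supersaturation argument in the paper produces $\Omega(n^4)$ \emph{subgraph} copies of $K_4^-$, and one needs $K_4$-freeness of $G'$ (which you verify via Lemma~\ref{lm:S}) to conclude that these are all induced.

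Your second step, however, is wrong. Kruskal--Katona gives an \emph{upper} bound on the number of triangles in terms of the edge count, not a lower bound: a graph with $\binom{x}{2}$ edges has at most $\binom{x}{3}$ triangles. There is no lower bound of the form $t(L(z))=\Omega(d_{G'}(z)^{3/2})$; a complete bipartite graph has $\Theta(n^2)$ edges and zero triangles. Moreover the average link-graph edge count here is roughly $n^2/6$, which is below the Tur\'an threshold $n^2/4$, so even a Tur\'an-type argument cannot force triangles in individual link graphs. The codegree condition only gives minimum degree about $n/3$ in (most) link graphs, and $K_{n/3,2n/3}$ shows that this too is insufficient.

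The paper instead uses that $|E(G')|\ge(\tfrac13-o(1))\binom{n}{3}$ exceeds the Tur\'an density $\pi(K_4^-)<\tfrac13$ (known from work of Matthias and Mubayi, later sharpened by Baber--Talbot), and then invokes supersaturation to obtain $\Omega(n^4)$ copies of $K_4^-$. Combined with your $K_4$-freeness check, these are all induced.
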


\begin{proof} The $3$-graph $G'$ contains at least $\left(\frac{1}{3}+o(1)\right)\binom{n}{3}$ $3$-edges, while it is known that $\pi(K_4^-)<\frac{1}{3}$, as shown by
Matthias~\cite{matthias:94} and Mubayi~\cite{mubayi:03} (the current best known upper-bound is
$\pi(K_4^-)\le 0.2871$, proved by Baber and Talbot~\cite{BaberTalbot11} using
flag algebras). Our claim is thus immediate from the Removal Lemma, or
from supersaturation (see Erd{\H o}s and Simonovits~\cite{ErdosSimonovits83}).\end{proof}

For every quadruple of vertices $abcd$ that induce $K_4^-$ in $G'$ (with
$abc,abd,acd\in E(G')$) form the vector $\B p=\B p_{abcd}$ as in (\ref{eq:x}). The
transcript shows that there are $24$ $\tau_6$-flags with $5$ vertices; thus
$\B p_{abcd}\in \I R^{24}$. Also, the transcript shows that the rank of
$Q=Q^{\tau_6}$ is $23$; thus the nullspace of $Q$ is
1-dimensional. From Lemma~\ref{lm:Eigenvectors} we know
that the (unique up to a scaling) forced zero eigenvector $\B z$ of $Q$ %^{\tau_6}$
consists of $21$ entries $0$ and three equal entries that correspond to the
three $\tau_6$-flags with the unlabelled vertex having the following
links in $abcd$: 1) $ab,ac,ad$ 2) $bc,bd,cd$ 3) empty. Indeed, the only way we
see $\tau_6$ in $T_{V_1,V_2,V_3}$ is when $a\in V_i$ and $b,c,d\in V_{i-1}$ for
some $i\in \I Z_3$; by choosing the unlabelled vertex  $x$ in
respectively $V_{i-1}$,  $V_{i}$, $V_{i+1}$, we get these link graphs (each
appearing about $n/3$ times when each $|V_j|=n/3$). Scale $\B z$ so that it has unit $\ell_2$-norm
$\|\B z\|=1$. 

Take a spectral decomposition $Q=\sum_{i=1}^{23}\lambda_i \B
f_i^T\B f_i$, where the $\B f_i$ are eigenvectors of $Q$
such that $\{\B f_1,\dots,\B f_{23},\B z\}$
forms an orthonormal basis of $\mathbb{R}^{24}$. Since $Q\succeq 0$ has rank $23$, we have that each $\lambda_i>0$.
Let $\lambda=\min(\lambda_1,\dots,\lambda_{23})>0$, a positive constant
independent of $n$. Since
$(\B p,\B p)=(\B p,\B z)^2+\sum_{i=1}^{23} (\B p,\B f_i)^2$, we have
 \begin{equation}\label{eq:lambda}
 \B pQ\B p^T=\sum_{i=1}^{23} \lambda_i (\B p, \B f_i)^2\ge \lambda ((\B p,\B p)-(\B p,\B
z)^2). 
 \end{equation}
 Note that for all $abcd$ inducing $\tau_6$, we have $\|\B p_{abcd} \|^2=\Omega(n^2)$.
We know that $\sum_{abcd} \B p_{abcd}Q\B p_{abcd}^T
 =O(n^5)$.  Thus,
by Lemma~\ref{lm:1},
the right-hand side of (\ref{eq:lambda}) is 
$O(n)=o(\|\B p_{abcd}\|^2)$ for
all but $o(n^4)$ quadruples $abcd$ inducing $\tau_6$. Fix one such `typical' quadruple $abcd$ and
consider $\B p=\B p_{abcd}$.
By the cosine formula, the approximate equality \[(\B p,\B
z)^2= (\B p,\B p)+O(n)=\|\B p\|^2\|\B z\|^2 (1+o(1))\] implies that $\B p$ and $\B z$ are almost collinear.
It follows
that $\B p\in \I R^{24}$ has $21$ coordinates with values $o(n)$ and $3$ coordinates taking values
$(1/3+o(1))n$ corresponding to the $\tau_6$-flags 1)--3) defined above. 
So, if we define
 \begin{eqnarray*}
 V_1&=&\{x\in V(G')\mid {G'}_x[abcd]=\{ab,ac,ad\}\}\\
 V_2&=& \{x\in V(G')\mid {G'}_x[abcd]=\{bc,bd,cd\}\},\\
 V_3&=& \{x\in V(G')\mid {G'}_x[abcd]=\emptyset\},
 \end{eqnarray*}
 then for each $i\in[3]$ we have $\vert V_i\vert= (1/3+o(1))n$. Let $W= [n] \setminus \bigcup_{i=1}^3 V_i$. Since $\vert W \vert =o(n)$, it is sufficient to show that the induced subgraph $G'[\bigcup_{i=1}^3 V_i]$ lies within edit distance $o(n^3)$ of the $3$-graph $T_{V_1, V_2, V_3}$ to conclude our proof of Theorem~\ref{f32stability}. We shall do this via a succession of easy lemmas. We again use `$x_1x_2\vert y_1y_2y_3$' as a notational shorthand for the statement that the $3$-edges $x_1x_2y_1,x_1x_2y_2,x_1x_2y_3$ and $y_1y_2y_3$ are all present in our graph (and thus that $\{x_1x_2y_1y_2y_3\}$ spans a copy of $F_{3,2}$, contradicting our assumption that $G'$ is $F_{3,2}$-free).

\begin{lemma}\label{cl:1} $G'[V_1]$ and $G'[V_2]$ are empty $3$-graphs.\end{lemma}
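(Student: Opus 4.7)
The plan is to prove each case by contradiction, exhibiting an explicit copy of $F_{3,2}$ in $G'$ whenever $G'[V_i]$ contains an edge, for $i\in\{1,2\}$. The key observation is purely definitional: by construction, every $x\in V_1$ satisfies $xab,xac,xad\in E(G')$, and every $x\in V_2$ satisfies $xbc,xbd,xcd\in E(G')$. In particular, for $i=1$ the pair $ab$ is joined to every vertex of $V_1$, and for $i=2$ the pair $bc$ is joined to every vertex of $V_2$.

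First I would handle $V_1$. Suppose for contradiction that $xyz\in E(G'[V_1])$. Since $V_1\cap\{a,b,c,d\}=\emptyset$ (no vertex of $\{a,b,c,d\}$ has a link of the prescribed form on $\{a,b,c,d\}$), the five vertices $a,b,x,y,z$ are distinct. From $x,y,z\in V_1$ we have $xab,yab,zab\in E(G')$, and by hypothesis $xyz\in E(G')$. Hence $ab\mid xyz$ spans a copy of $F_{3,2}$, contradicting the $F_{3,2}$-freeness of $G'$.

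Next, the case of $V_2$ is essentially identical, but with the pair $bc$ in place of $ab$: if $xyz\in E(G'[V_2])$, then $xbc,ybc,zbc\in E(G')$ by the definition of $V_2$, and together with $xyz\in E(G')$ this gives a copy $bc\mid xyz$ of $F_{3,2}$, again a contradiction.

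I do not anticipate any real obstacle here: the argument is a one-step unpacking of the definitions of $V_1$ and $V_2$, combined with the $F_{3,2}$-freeness of $G'$. The only point worth checking is precisely that $G'$ is $F_{3,2}$-free, which is guaranteed by the induced removal step used to produce $G'$ from $G$ (where the removal is explicitly performed so as not to create any copy of $F_{3,2}$).
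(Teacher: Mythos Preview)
Your proof is correct and is precisely the paper's argument, just spelled out in more detail: the paper's one-line proof simply notes that $xyz\in G'[V_1]$ yields $ab\mid xyz$ and $xyz\in G'[V_2]$ yields $bc\mid xyz$, both contradicting $F_{3,2}$-freeness. Your observation that the relevant vertices are distinct (since none of $a,b,c,d$ has the $V_1$ link pattern, and $b,c\notin V_2$) is a sound clarification the paper leaves implicit.
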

\begin{proof} Indeed, if $xyz\in G'[V_1]$, then $ab\vert xyz$, while if $xyz\in G'[V_2]$,
then $bc\vert xyz$, both of which are contradictions.\end{proof}

\begin{lemma}\label{cl:2} $G'$ has no $3$-edges of the form $V_1V_2V_2$, that is, $3$-edges with two vertices in $V_2$ and one in $V_1$.\end{lemma}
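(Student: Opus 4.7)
My plan is to argue by contradiction using Lemma~\ref{lm:S}. Suppose there exists an edge $\{v_1, v_2, v_2'\} \in E(G')$ with $v_1 \in V_1$ and $v_2, v_2' \in V_2$. I would apply Lemma~\ref{lm:S} to the $6$-set $S = \{a, b, c, v_1, v_2, v_2'\}$, which by the definitions of $V_1$ and $V_2$ together with the $K_4^-$ structure on $\{a,b,c,d\}$ is forced to contain the six $3$-edges
\[
abc,\ \ v_1 ab,\ \ v_1 ac,\ \ v_2 bc,\ \ v_2' bc,\ \ v_1 v_2 v_2'.
\]
Lemma~\ref{lm:S} asserts that $G'[S]$ equals $T_{A,B,C}$ plus at most one extra tripartite ``phantom'' $3$-edge, for some partition $S = A\sqcup B\sqcup C$ with cyclic order $A\to B\to C\to A$ giving $T$-edges of types $AAB$, $BBC$, $CCA$.

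The proof would then be a short case analysis on the role of $abc$ in this decomposition. When $abc$ is a $T$-edge, two of $\{a, b, c\}$ lie in a common part $X$ and the third in the cyclic successor $Y$; I split into three sub-cases. If the pair in $X$ is $\{a, b\}$ or $\{a, c\}$, then the edges $v_1 ab$ and $v_1 ac$ already conflict over the placement of $v_1$: no single part can host $v_1$ so that both edges are simultaneously $T$-edges (and neither pair is tripartite, since each shares two vertices of $X$, preventing the phantom remedy). If the pair in $X$ is $\{b, c\}$, then $v_2 bc$ and $v_2' bc$ force $v_2, v_2' \in Y$, and then $v_1 v_2 v_2'$ (which cannot be tripartite because $v_2, v_2'$ share a part) forces $v_1 \in Z$ as a $T$-edge of type $YYZ$; but then both $v_1 ab$ and $v_1 ac$ are tripartite, exceeding the one-phantom budget. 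Finally, if $abc$ is itself the phantom edge, then the single phantom is spent and $v_1 ab, v_1 ac$ must both be $T$-edges, which again forces inconsistent positions for $v_1$. In every case we obtain a contradiction.

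The main obstacle is executing the case analysis rigorously while keeping it short, but each sub-case reduces to the simple observation that a $3$-edge with two vertices in one part and one vertex in a part that is not the cyclic successor is neither a $T$-edge nor tripartite. I also note that a direct search for an $F_{3,2}$ copy inside $S$ --- using the triangle $v_1 v_2 v_2'$ as the ``$uvw$'' side --- does not seem to succeed, since the link pairs of $v_1$ and of $v_2, v_2'$ on $\{a,b,c,d\}$ are disjoint; and the natural candidate $\{b,c\} \mid \{a, v_2, v_2'\}$ fails because $a \in V_2$ and Lemma~\ref{cl:1} then rules out the required edge $a v_2 v_2'$. So Lemma~\ref{lm:S} appears to be the right tool for this step.
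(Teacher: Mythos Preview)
Your argument is correct, and it takes a genuinely different route from the paper's. The paper applies Lemma~\ref{lm:S} not to $\{a,b,c,v_1,v_2,v_2'\}$ but to the $6$-set $\{a,b,c,d,x,z\}$ with $z\in V_1$ and $x\in V_2$. Because this set contains the entire $K_4^-$ on $abcd$, the tripartition is pinned down immediately with no case analysis: $b,c,d$ must all lie in one part $A$ (else the non-edge $bcd$ would be forced in) and $a$ in the successor part $B$; then $x\in B$ and $z\in A$ are forced, and the \emph{positive} conclusion $xzb,xzc,xzd\in E(G')$ follows. Running the same for $y\in V_2$ gives $yzb,yzc,yzd\in E(G')$, and now the hypothetical edge $xyz$ produces an explicit $F_{3,2}$ copy $zy\mid bdx$ in $G'$. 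So the paper trades two applications of Lemma~\ref{lm:S} for the elimination of case analysis and actually locates a forbidden subgraph; your approach packs all three new vertices into a single $6$-set at the cost of the case split on how $abc$ sits in the partition, and derives a pure structural contradiction rather than an $F_{3,2}$.

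One small technical point: under the paper's definitions the vertex $a$ itself satisfies ${G'}_a[abcd]=\{bc,bd,cd\}$ and hence lies in $V_2$, so in principle one of $v_2,v_2'$ could equal $a$ and your set $S$ would then have only five elements. This is easy to patch (e.g.\ throw in $d$ to restore a $6$-set, or observe that the degenerate case is even easier), but you should say a word about it.
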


\begin{proof} Take any $z\in V_1$ and distinct $x,y\in V_2$. Consider $G'[abcdxz]$. By
Lemma~\ref{lm:S}, we have that $G'[abcdxz]=T_{A,B,C}$ plus at most one
tripartite edge for some partition $abcdxz=A\cup B\cup C$.
Since $G'[abcd]\cong K_4^-$, it follows that $bcd$ are in one part, say $A$, and
$a$ lies in the next part $B$. Since $xbc,xbd,xcd\in E(G')$, we must have $x\in B$.
Likewise $z\in A$. Thus necessarily $xzb,xzc,xzd\in E(G')$. 

Likewise $yzb,yzc,yzd\in E(G')$. So if $xyz\in E(G')$ also, then $zy\vert bdx$, a contradiction.\end{proof}

\begin{lemma}\label{cl:3} All but $o(n^3)$ $3$-edges of the form $V_2V_2V_3$ are in
$G'$.\end{lemma}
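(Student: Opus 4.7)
My plan is to use Lemmas~\ref{cl:1} and~\ref{cl:2} to trap the joint neighbourhood of any pair from $V_2$ inside $V_3\cup W$, and then deduce the statement via a routine codegree counting argument exploiting $|V_3|=(1/3+o(1))n$ and $|W|=o(n)$.

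First, for an arbitrary pair $\{x,y\}\subseteq V_2$, I would argue that $\Gamma_{G'}(x,y)\subseteq V_3\cup W$: the intersection with $V_2$ is empty because $G'[V_2]$ has no edges (Lemma~\ref{cl:1}), and the intersection with $V_1$ is empty because $G'$ has no $V_1V_2V_2$ edges (Lemma~\ref{cl:2}). Recalling that $|W|=o(n)$, whenever $d_{G'}(x,y)\geq(1/3-o(1))n$, this immediately gives
\[
|V_3\setminus\Gamma_{G'}(x,y)|\leq |V_3|-|\Gamma_{G'}(x,y)\cap V_3|\leq |V_3|-d_{G'}(x,y)+|W|=o(n).
\]

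Next, I would appeal to the average codegree condition on $G'$ (noted just before the statement of the lemma): at most $o(n^2)$ pairs of $V(G')$ have codegree less than $(1/3-o(1))n$. Split the pairs in $V_2^{(2)}$ into \emph{good} ones (meeting the codegree bound) and \emph{bad} ones. Good pairs each miss at most $o(n)$ elements of $V_3$ in their joint neighbourhood, contributing at most $\binom{|V_2|}{2}\cdot o(n)=o(n^3)$ non-edges of type $V_2V_2V_3$ in total; bad pairs number at most $o(n^2)$ and contribute at most $|V_3|=O(n)$ non-edges each, for another $o(n^3)$. Summing these bounds yields the lemma.

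There is no serious technical obstacle here: once the two structural lemmas above have been proved, this is essentially a two-line counting argument. The same template will also yield the analogous statements for $V_3V_3V_1$ and $V_1V_1V_2$ edges in the subsequent steps of the stability proof.
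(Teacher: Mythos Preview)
Your argument is correct and is essentially the same as the paper's: both use Lemmas~\ref{cl:1} and~\ref{cl:2} to confine $\Gamma_{G'}(x,y)$ to $V_3\cup W$ for pairs $x,y\in V_2$, and then combine the codegree bound (valid for all but $o(n^2)$ pairs) with $|V_3|=(1/3+o(1))n$ and $|W|=o(n)$ to conclude. The only cosmetic difference is that you count missing $V_2V_2V_3$ triples directly, whereas the paper counts present edges with two vertices in $V_2$ and observes that almost all of them must land in $V_3$.
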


\begin{proof} By our observation that most (all but $o(n^2)$) pairs in $G'$ have codegree at least $(1+o(1))n/3$, by the fact that $\vert W \vert=o(n)$ and by Lemma~\ref{cl:1}, the 3-graph $G'[\bigcup_{i=1}^3 V_i]$ must have at least $(1-o(1)){n/3\choose 2}\times
n/3$ 3-edges that intersect the independent set $V_2$ in at least two vertices.
By Lemma~\ref{cl:2}, all these 3-edges are of the form $V_2V_2V_3$, giving the
required result.\end{proof}

\begin{lemma}\label{cl:4} $V_3$ spans $o(n^3)$ 3-edges in $G'$.\end{lemma}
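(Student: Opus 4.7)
The plan is to argue by contradiction: assume $G'[V_3]$ contains $\Omega(n^3)$ 3-edges and then exhibit a copy of $F_{3,2}$ in $G'$. Since $G'$ was obtained from the $F_{3,2}$-free 3-graph $G$ by applying the Induced Removal Lemma without creating new copies of $F_{3,2}$, this will be the desired contradiction. The key observation is that whenever $\{x,y,z\}\subseteq V_3$ spans a 3-edge of $G'$ and $\{u,v\}\subseteq V_2$ satisfies $uvx,uvy,uvz\in E(G')$, the 5-set $\{u,v,x,y,z\}$ witnesses the configuration $uv\vert xyz$, i.e.\ a copy of $F_{3,2}$. So it suffices to show that under the contradiction hypothesis such a pair $\{u,v\}$ exists for at least one edge $\{x,y,z\}$ of $G'[V_3]$.

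I would then set up a double count over ``pair--triple'' combinations $(\{u,v\},\{x,y,z\})$ with $\{u,v\}\in V_2^{(2)}$ and $\{x,y,z\}$ a 3-edge of $G'[V_3]$. Using $|V_2|=(1/3+o(1))n$, the total number of such combinations is at least
\[
\binom{|V_2|}{2}\cdot |E(G'[V_3])|=\Omega(n^5).
\]
Call a combination \emph{bad} if at least one of $uvx$, $uvy$, $uvz$ is a non-edge of $G'$, and \emph{good} otherwise. Each bad combination is witnessed by a $V_2V_2V_3$ non-edge $\{u,v,w\}$ together with an edge of $G'[V_3]$ containing $w$. By Lemma \ref{cl:3} there are only $o(n^3)$ non-edges of type $V_2V_2V_3$, and each $w\in V_3$ lies in at most $\binom{|V_3|-1}{2}=O(n^2)$ edges of $G'[V_3]$. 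Hence the number of bad combinations is at most $o(n^3)\cdot O(n^2)=o(n^5)$.

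Since $o(n^5)<\Omega(n^5)$ for $n$ large, at least one good combination exists, yielding a copy of $F_{3,2}$ in $G'$ and the desired contradiction. The argument is essentially mechanical once Lemma \ref{cl:3} is in hand, so there is no genuine obstacle beyond correctly lining up the counts.
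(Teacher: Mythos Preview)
Your proof is correct. Both your argument and the paper's rest on the same underlying observation: a 3-edge inside $V_3$ together with a pair in $V_2$ whose joint neighbourhood contains all three vertices yields a copy of $F_{3,2}$. The difference is in packaging. The paper argues directly: by Lemma~\ref{cl:3}, a \emph{single} typical pair $x,y\in V_2$ has $|V_3\setminus\Gamma(x,y)|=o(n)$; since $\Gamma(x,y)$ is independent (as $G'$ is $F_{3,2}$-free), every 3-edge of $G'[V_3]$ must use at least one of the $o(n)$ vertices outside $\Gamma(x,y)$, giving at most $o(n)\cdot n^2=o(n^3)$ such edges. You instead assume $\Omega(n^3)$ edges in $V_3$ and run a global double count over all pairs in $V_2$ to locate a good combination. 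The paper's route is a couple of lines shorter because fixing one good pair avoids the need to bound the bad combinations, but your counting argument is equally valid and perhaps more mechanical to verify.
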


\begin{proof} By Lemma~\ref{cl:3}, for all but $o(n^2)$ $x,y \in V_2$
%	If we take `typical' $x,y\in V_2$, then by Lemma~\ref{cl:3}, 
we have that $\vert V_3\setminus \Gamma(x,y) \vert=o(n)$. But $\Gamma(x,y)$ is an
independent set as $G'$ is $F_{3,2}$-free. The lemma follows.\end{proof}
Let $i \in \{1,2,3\}$. We write $V_{i+1}$ for the part coming after $V_i$ in the cyclic order on $\{1,2,3\}$, so that $V_{3+1}=V_1$, $V_{1-1}=V_3$, etc.
\begin{lemma}\label{cl:5} If all but $o(n^3)$ 3-edges $V_iV_iV_{i+1}$ are in
$G'$, then all but $o(n^3)$ 3-edges $V_iV_{i+1}V_{i+1}$ are not in $G'$.
\end{lemma}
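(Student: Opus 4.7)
My plan is to argue by contradiction: assume that at least $\varepsilon n^3$ triples of type $V_iV_{i+1}V_{i+1}$ are 3-edges of $G'$ for some $\varepsilon>0$, and deduce that $G'$ must contain either $F_{3,2}$ or too many non-edges of type $V_iV_iV_{i+1}$, violating our hypothesis. The single structural observation driving everything is that for any such 3-edge $xyz$ (with $x\in V_i$, $y,z\in V_{i+1}$), the joint neighbourhood $\Gamma(x,y)$ must be an independent set: indeed, any three vertices $u,v,w\in\Gamma(x,y)$ with $uvw\in E(G')$ would give a copy $xy\mid uvw$ of $F_{3,2}$. Since $\Gamma(x,y)$ automatically contains $z$ together with every $a\in V_i$ for which $xya\in E(G')$, and since by hypothesis there are typically almost $|V_i|$ such $a$, this forces $abz\notin E(G')$ for a great many pairs $a,b\in V_i$.

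To turn this into a counting argument, I would introduce two notions of ``bad''. Call a pair $(x,y)\in V_i\times V_{i+1}$ \emph{bad} if $|\{a\in V_i: xya\notin E(G')\}|>\delta n$, for a small constant $\delta>0$ to be fixed; and call $z\in V_{i+1}$ \emph{bad} if there are at least $cn^2$ pairs $\{a,b\}\subseteq V_i$ with $abz\notin E(G')$, for an appropriate $c>0$. Since the hypothesis gives only $o(n^3)$ missing $V_iV_iV_{i+1}$ triples, a double-counting (averaging) argument shows that the number of bad pairs is $o(n^2)$ and the number of bad vertices is $o(n)$. The observation above then reads: if $xyz$ is one of our offending 3-edges and $(x,y)$ is good, then $z$ is bad (because we produce $\binom{|G(x,y)|}{2}=\Omega(n^2)$ non-edges $abz$ for this fixed $z$), and symmetrically $(x,z)$ good forces $y$ bad.

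The final step is to partition the alleged $\varepsilon n^3$ offending edges and bound each class. Edges $xyz$ for which at least one of $y,z$ is bad contribute at most $|V_i|\cdot|V_{i+1}|\cdot o(n)=o(n^3)$. Edges for which both $(x,y)$ and $(x,z)$ are bad contribute at most $\sum_{x\in V_i}\binom{B_x}{2}\leq\tfrac12 n\sum_x B_x=o(n^3)$, where $B_x$ is the number of bad pairs through $x$ and we have used $\sum_x B_x=o(n^2)$. Summing the two bounds gives $\varepsilon n^3\leq o(n^3)$, the desired contradiction.

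The one delicate point is calibrating $\delta$ small enough, using $|V_i|=(\tfrac13+o(1))n$, so that a good pair $(x,y)$ has $|G(x,y)|\geq n/4$ and thereby produces $\Omega(n^2)$ missing triples of the right type; beyond that the argument is pure averaging, with the $F_{3,2}$-freeness applied only once in the key structural observation.
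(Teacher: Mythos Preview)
Your argument is correct. However, the paper's proof is considerably more direct: it simply observes that for all but $o(n^5)$ $5$-tuples $(z,z',z'',x,y)$ with $z,z',z''\in V_i$ and $x,y\in V_{i+1}$, the three triples $xzz'$, $xzz''$, $yz'z''$ are all edges (by the hypothesis on $V_iV_iV_{i+1}$), and hence $xyz\notin E(G')$, since otherwise $xz\mid yz'z''$. Since each $V_iV_{i+1}V_{i+1}$ triple $xyz$ extends to $\Theta(n^2)$ such $5$-tuples, this immediately gives that all but $o(n^3)$ of them are non-edges.

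Both proofs rest on exactly the same structural observation (independence of joint neighbourhoods in an $F_{3,2}$-free $3$-graph), but the paper packages it as a single averaging over $5$-tuples, whereas you unpack it into a good/bad vertex and good/bad pair classification followed by a case split. Your route works and is entirely self-contained, but the paper's supersaturation-style count avoids introducing the auxiliary thresholds $\delta$ and $c$ and the corresponding bookkeeping; in particular it sidesteps the slightly delicate step where you bound $\sum_x\binom{B_x}{2}$ by $\tfrac{n}{2}\sum_x B_x$ using the crude inequality $B_x\le n$. The trade-off is that your decomposition is perhaps more reusable as a template for similar lemmas, while the paper's argument is tailored and terse.
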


\begin{proof} 
By the assumption of the lemma, for all but $o(n^5)$ $5$-tuples of vertices $z,z',z''\in V_i$ and $x,y\in V_{i+1}$, we have $xzz',xzz'',yz'z''\in E(G')$. To prevent $xz \vert yz'z''$, we must have $xyz\not\in E(G')$.\end{proof}

By Lemmas~\ref{cl:3} and~\ref{cl:5} we conclude that all but at most $o(n^3)$
$3$-edges of the form $V_2V_3V_3$ are not in $E(G')$. This together with Lemma~\ref{cl:4} implies that
almost all $3$-edges of the form $V_3V_3V_1$ are in $G'$ in the same way as we showed that almost all $V_2V_2 V_3$ $3$-edges are in $G'$ in Lemma~\ref{cl:3}. Now, by Lemma~\ref{cl:5} again, we have that only $o(n^3)$ $3$-edges of the form $V_1V_1V_3$ belong to $E(G')$.

Finally, to finish the proof of stability,
it remains that at most $o(n^3)$ 3-edges are of the form $V_1V_2V_3$. For all but $o(n^5)$ $5$-tuples $x,x'\in V_{1}$, $y\in V_2$, and $z,z'\in V_{3}$, we have
$xx'y,x'zz'\in E(G')$. Thus at least one of $xyz,xyz'$ is missing from $G'$ 
(to prevent $xy \vert x'zz'$).
However, if we had $\Omega(n^3)$ 3-edges of the form $V_1V_2V_3$, then we would have $\Omega(n^4)$ choices of $x,y,z,z'$ with both $xyz,xyz'$ being
in $E(G')$, a contradiction.

It follows that $G'$ (and hence $G$) lies within edit distance $o(n^3)$ of a balanced $T_{V_1, V_2, V_3}$ configuration. This concludes the proof of Theorem~\ref{f32stability}. \qed

\section{The codegree threshold}\label{exactsection}
In this section, we determine the codegree threshold of $F_{3,2}$ for all
sufficiently large $n$. This is a simple (but long) chain of arguments from
stability, with a slight twist at the end when we deal with the fact that the
extremal constructions are not unique and depend on the congruence class of $n$
modulo~$3$.

We know from Theorem~\ref{f32stability} that almost extremal $3$-graphs are
close to balanced $T_{A,B,C}$ constructions. We use this fact as our starting
point and analyse an extremal example $G$ via a series of lemmas to show
that in fact $G$ is not only close to a certain fixed, balanced $T_{A,B,C}$
construction, but that it consists exactly of a subgraph of this $T_{A,B,C}$
construction together with a small number of `tripartite' $3$-edges. As an
immediate corollary, we have that for all $n$ sufficiently large,
$\mathrm{coex}(n, F_{3,2})\leq \lfloor n/3 \rfloor$.

At that point we separate into cases corresponding to the congruence class of
$n$ modulo $3$, and determine both the codegree threshold and the extremal
constructions for all $n$ sufficiently large. 

\subsection {The structure of almost extremal configurations}\label{stabilityanalysis}
In our argument, we shall frequently need to locate potential
$F_{3,2}$-subgraphs inside larger $3$-graphs, and it will be convenient just as
in Sections~\ref{edssection} and~\ref{flagsection} to write $ab \vert cde$ to mean that $abc, abd, abe$
and $cde$ are all $3$-edges (and thus that $\{abcde\}$ spans a copy of
$F_{3,2}$).

Let $G$ be a $3$-graph on $n$ vertices with independent neighbourhoods and
minimal codegree $\delta_2(G)\geq n/3+o(n)$. Pick a partition of its vertex set
$V(G)= V_1 \cup V_2 \cup V_3$ such that $\vert E(G) \setminus E(T_{V_1,V_2,
V_3}) \vert$ is minimised.

Write $T$ for $T_{V_1,V_2, V_3}$. Set $B= E(G) \setminus E(T)$ to be the set of
\emph{bad} $3$-edges, i.e.\ $3$-edges which are in $G$ and not
in $T$, and set $M=E(T)\setminus E(G)$ to be the set of \emph{missing}
$3$-edges, i.e.\ $3$-edges which are in $T$ but not in $G$.
%Phrasing

By Theorem~\ref{f32stability}, we know that $G$ lies at edit distance $o(n^3)$
of a balanced $T_{A,B,C}$ construction. 
As an easy consequence of this fact, we have the following:

\begin{lemma}\label{Talmostbalanced}
\begin{enumerate}[(i)]
\item $\vert B \vert = o(n^3)$,
\item $\vert M \vert = o(n^3)$,
\item $\vert V_i \vert = n/3 +o(n)$ for $i=1,2,3$.
\end{enumerate}
\end{lemma}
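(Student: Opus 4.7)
The plan is to leverage Theorem~\ref{f32stability} together with the minimality in the choice of partition $(V_1,V_2,V_3)$ to deduce the three claims, in the order (i), (iii), (ii). First, I would apply Theorem~\ref{f32stability} to $G$: for any $\varepsilon>0$, if $n$ is large enough, there is a balanced partition $V(G)=A\sqcup A'\sqcup A''$ (each part of size $n/3+O(1)$) such that the edit distance from $G$ to $T_{A,A',A''}$ is at most $\varepsilon\binom{n}{3}$. In particular $|E(G)\setminus E(T_{A,A',A''})|\leq\varepsilon\binom{n}{3}$, and by the minimality of $(V_1,V_2,V_3)$, we immediately get $|B|\leq\varepsilon\binom{n}{3}$ for any fixed $\varepsilon>0$, establishing~(i).

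For (iii), I would use the fact that every pair $(x,y)\in V_i^{(2)}$ has codegree exactly $|V_{i+1}|$ in $T_{V_1,V_2,V_3}$ (indices taken mod $3$), so
\[
d_G(x,y)\leq|V_{i+1}|+|\{z:\{x,y,z\}\in B\}|.
\]
Summing over pairs in $V_i^{(2)}$, using the codegree hypothesis $\delta_2(G)\geq n/3+o(n)$, and noting that $\sum_{xy\in V_i^{(2)}}|\{z:xyz\in B\}|\leq 3|B|=o(n^3)$ by (i), I obtain
\[
\binom{|V_i|}{2}\bigl(\tfrac{n}{3}-|V_{i+1}|\bigr)\leq o(n^3).
\]
Whenever $|V_i|=\Omega(n)$, this forces $|V_{i+1}|\geq n/3-o(n)$. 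Pigeonhole guarantees that some $|V_i|\geq n/3$, so I can bootstrap around the cycle to conclude $|V_i|\geq n/3-o(n)$ for every~$i$, whence $|V_i|=n/3+o(n)$ from $|V_1|+|V_2|+|V_3|=n$.

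Finally, for (ii), from (iii) one computes $|E(T_{V_1,V_2,V_3})|=\sum_i\binom{|V_i|}{2}|V_{i+1}|=n^3/18+o(n^3)$, while the codegree hypothesis gives $3|E(G)|=\sum_{xy}d_G(x,y)\geq\binom{n}{2}\delta_2(G)$, so $|E(G)|\geq n^3/18+o(n^3)$. On the other hand, $|E(G)|=|E(G)\cap E(T_{V_1,V_2,V_3})|+|B|\leq|E(T_{V_1,V_2,V_3})|+|B|=n^3/18+o(n^3)$ by (i) and (iii). Combining these, $|E(G)|=n^3/18+o(n^3)$, and then $|M|=|E(T_{V_1,V_2,V_3})|-(|E(G)|-|B|)=o(n^3)$. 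The only mildly delicate point in this plan is the cyclic bootstrap in (iii) ruling out any $V_i$ being of sublinear size; the rest is essentially bookkeeping around the stability input.
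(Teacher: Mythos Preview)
Your proposal is correct and follows essentially the same approach as the paper: stability plus minimality gives~(i), an averaging argument over pairs inside each $V_i$ gives~(iii), and edge-counting using the codegree lower bound gives~(ii). If anything, your treatment of~(iii) is slightly more careful than the paper's, which divides by $\binom{|V_i|}{2}$ without explicitly addressing the possibility that some $|V_i|$ is sublinear; your cyclic bootstrap handles this cleanly.
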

\begin{proof}
Since the edit distance between $G$ and a balanced $T_{A,B,C}$ construction is
$o(n^3)$, we have that $\vert B \vert =o(n^{3})$. (Since otherwise $T$ would not
be minimising $\vert E(G) \setminus E(T) \vert$.)

Let $\alpha_i=\vert V_i \vert/n$ for $i=1,2,3$. The
number of $3$-edges in $G$ with at least two vertices in $V_i$ is at most the
number of $3$-edges in $T$ with this property plus the total number of bad
$3$-edges $\vert B \vert$. In particular the average codegree in $G$ of pairs of
vertices in $V_i$ is at most
\[\left({\alpha_i}^2\alpha_{i+1} n^3/2
+o(n^3)\right)/\left({\alpha_i}^2n^2/2\right)= \alpha_{i+1}n+o(n).\]
Since $\delta_2(G) \geq n/3+o(n)$, we must have in particular
$\alpha_i=1/3+o(1)$ for $i=1,2,3$. We have thus established parts (i) and (iii)
of our lemma.

Finally for part (ii) observe that the total number of $3$-edges in $G$
satisfies
 $$
e(G) = \sum_{x,y \in V(G)} \frac{d(x,y)}3
 \geq \binom{n}{2}\, \frac{\delta_2(G)}3
= \frac{n^3}{18}+o(n^3).
 $$
It then follows from (iii) and (i) that $\vert M \vert=\vert E(T)\vert-\vert E(G)\vert +\vert B
\vert$ is $o(n^3)$.
\end{proof}

Now let us analyse the \emph{link graphs} of vertices in $G$. Given $x \in
V(G)$, let $G_x$ be the $2$-graph on $V(G)$ with $2$-edges $\{uv: \ xuv \in
E(G)\}$ and let $e(G_x)= \vert E(G_x)\vert$ be the number of edges it contains.
Also let $G_x[V_i]$ denote the subgraph of $G_x$ induced by the vertices in
$V_i$, 
\[G_x[V_i]=(V_i, \{uv \in E(G_x):\ u,v \in V_i\})\] 
and let $G_x[V_i, V_j]$ denote the bipartite subgraph of $G_x$ on $V_i \cup V_j$
with edges $\{uv\in E(G_x): \ u \in V_i, v \in V_j\}$.

 We shall also
write $V_{i+1}$ for the part coming after $V_i$ in the cyclic order on
$\{1,2,3\}$, so that $V_{3+1}=V_1$.

We first prove six lemmas which show that the link graphs of \emph{all}
vertices of $G$ look like they ought to (up to some small error) if $G$ was a
$T_{A,B,C}$ construction.
\begin{lemma}\label{linkatmostonevi}
For every $x \in V(G)$, there is at most one $i\in\{1,2,3\}$ for which
$e(G_x[V_i])= \Omega(n^2)$.
\end{lemma}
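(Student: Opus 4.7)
The plan is to argue by contradiction via a direct $F_{3,2}$-hunting argument. Suppose towards contradiction that there exist $x \in V(G)$, a constant $c > 0$, and distinct indices $i, j \in \{1,2,3\}$ with $e(G_x[V_i]), e(G_x[V_j]) \geq cn^2$; by relabelling we may take $i=1$ and $j=2$. The key consequence of $F_{3,2}$-freeness that I will use is that $\Gamma(u, v)$ is an independent set in $G$ for every pair $\{u,v\}$: otherwise, a $3$-edge $yzw \subseteq \Gamma(u, v)$ would give $uv \vert yzw$, a copy of $F_{3,2}$.

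First, I locate a pair $\{u, v\} \subseteq V_1$ that simultaneously lies in $G_x[V_1]$ (so that $x \in \Gamma(u,v)$) and has $\Gamma(u,v)$ covering almost all of $V_2$. By Lemma~\ref{Talmostbalanced}(ii), $\vert M \vert = o(n^3)$, and each missing triple $uvw$ with $u,v \in V_1$ and $w \in V_2$ contributes $1$ to $\sum_{\{u,v\} \subseteq V_1} \vert V_2 \setminus \Gamma(u,v) \vert$. Hence for any fixed $\varepsilon > 0$, at most $o(n^2)$ pairs $\{u,v\} \in V_1^{(2)}$ satisfy $\vert V_2 \setminus \Gamma(u,v) \vert \geq \varepsilon n$. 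Since $e(G_x[V_1]) \geq cn^2 \gg o(n^2)$, for $n$ large I may select $\{u, v\} \in V_1^{(2)}$ with $xuv \in E(G)$ and $\vert \Gamma(u,v) \cap V_2 \vert \geq \vert V_2 \vert - \varepsilon n$.

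For such a pair, independence of $\Gamma(u,v)$ forces $xyz \notin E(G)$ for every pair $\{y,z\}$ in $\Gamma(u,v) \setminus \{x\}$, and in particular for every $\{y,z\} \subseteq \left( V_2 \cap \Gamma(u,v) \right) \setminus \{x\}$. Combined with $\vert V_2 \vert = n/3 + o(n)$ from Lemma~\ref{Talmostbalanced}(iii), a routine binomial estimate yields
\[e(G_x[V_2]) \leq \binom{\vert V_2 \vert}{2} - \binom{\vert V_2 \vert - \varepsilon n - 1}{2} = \frac{\varepsilon n^2}{3} + O(\varepsilon^2 n^2) + o(n^2).\]
Choosing $\varepsilon$ small enough in terms of $c$ contradicts $e(G_x[V_2]) \geq cn^2$.

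The main (and essentially only) subtle point is the joint selection of the pair $\{u,v\}$ meeting both requirements above, which is precisely where the defect bound $\vert M \vert = o(n^3)$ from Lemma~\ref{Talmostbalanced} enters. The remainder of the argument is straightforward counting, and nowhere does one need to use the partition's minimality, the codegree condition beyond its consequences already recorded in Lemma~\ref{Talmostbalanced}, or the ambient location of $x$ among the $V_k$.
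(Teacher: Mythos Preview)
Your proof is correct and uses the same two ingredients as the paper (the bound $|M|=o(n^3)$ from Lemma~\ref{Talmostbalanced} and $F_{3,2}$-freeness), but the mechanics are organised differently. The paper argues by a straight double count: any choice of $yz\in E(G_x[V_1])$ and $vw\in E(G_x[V_2])$ gives $yzx,vwx\in E(G)$, so at least one of $yzv,yzw$ (both triples of type $V_1V_1V_2$, hence in $E(T)$) must be missing to avoid $yz\vert vwx$; this yields $\Omega(n^4)$ quadruples each blamed on a missing edge counted at most $n$ times, whence $|M|=\Omega(n^3)$, a contradiction. You instead spend $|M|=o(n^3)$ up front via averaging to locate a single pair $\{u,v\}\subseteq V_1$ with $x\in\Gamma(u,v)$ and $\Gamma(u,v)\supseteq V_2$ up to $\varepsilon n$ exceptions, and then invoke independence of $\Gamma(u,v)$ to kill almost all of $G_x[V_2]$. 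The paper's route is shorter and avoids the $\varepsilon$-bookkeeping; your route has the pleasant feature of making the independent-neighbourhood property do the work directly, and would adapt more readily if one only knew $e(G_x[V_1])=\Omega(n^2)$ rather than both.
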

\begin{proof}
Pick $x \in V(G)$, and suppose that both $V_1$ and $V_2$ contain $\Omega(n^2)$
edges of $G_x$. Then there are $\Omega(n^4)$ choices of pairs $yz \in
E(G_x[V_1])$ and $vw \in E(G_x[V_2])$. For each such choice, at least one of the
triples $yzv$ and $yzw$ is
missing from $G$ and lies in $M$ (for otherwise we would have $yz\vert vwx$,
violating the assumption that $G$ is $F_{3,2}$-free).

Now each such forbidden triple is counted in at most $n$ quadruples $\{v,w, y,
z\}$, implying that $\vert M \vert= \Omega(n^3)$, and contradicting part (ii) of
Lemma~\ref{Talmostbalanced}.
\end{proof}

\begin{lemma}\label{linkdiffparts}
For every $x \in V(G)$ , there are at most $o(n^3)$ triples $w,y,z$ such that
$wz, yz \in E(G_x)$ and $w,y$ come from two different parts $V_i$, $i \in
\{1,2,3\}$.
\end{lemma}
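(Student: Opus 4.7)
My plan is to use $F_{3,2}$-freeness to force a bad edge of $G$ for every cross-cherry in $G_x$, and then control the cross-cherry count via the global bound on $\vert B\vert$ from Lemma~\ref{Talmostbalanced}.

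First, I would show that for any cross-cherry $(w,z,y)$ in $G_x$ with $w$ and $y$ in different parts, the joint neighbourhoods $\Gamma(x,z)$ and $\Gamma(w,y)$ must be disjoint. Indeed, any common element $v\notin\{x,z,w,y\}$ would, together with the cherry edges $xzw,xzy$ and the extra edges $xzv,wyv\in E(G)$, yield a copy $xz\vert wyv$ of $F_{3,2}$, contradicting our hypothesis on $G$.

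Second, I would exploit the tripartite pattern of $T=T_{V_1,V_2,V_3}$: by Construction~\ref{orcyclebip}, for any fixed pair of distinct vertices $(x,z)$, the set of $v$ such that $xzv\in E(T)$ is contained in a single part of the tripartition (determined by the parts of $x$ and $z$). Since the two leaves $w,y$ of a cross-cherry lie in different parts, at most one of them can sit in this "$T$-neighbour part", so at least one of $xzw,xzy$ belongs to $B=E(G)\setminus E(T)$. A routine double-count then shows that each bad edge $e=x\alpha\beta\in B_x$ witnesses at most $2(d_{G_x}(\alpha)+d_{G_x}(\beta))=O(n)$ cross-cherries at $x$, so the total cross-cherry count at $x$ is at most $O(n\vert B_x\vert)$.

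The main obstacle is to upgrade the averaged bound $\sum_x\vert B_x\vert=3\vert B\vert=o(n^3)$ (which only yields $o(n^3)$ cross-cherries per $x$ on average) to the pointwise bound $\vert B_x\vert=o(n^2)$ valid for every $x$. To do this I would leverage Lemma~\ref{linkatmostonevi}: a vertex $x$ with $\vert B_x\vert\geq\varepsilon n^2$ forces $G_x$ to contain many edges of a "wrong-pattern" bipartite type across two parts of the tripartition (since only one part may carry $\Omega(n^2)$ internal $G_x$-edges). A K\H{o}v\'ari--S\'os--Tur\'an-type extraction of a $K_{s,s}$ in such a bipartite piece, combined with the independent-neighbourhoods condition, then either embeds $F_{3,2}$ directly or forces $\Omega(n^3)$ missing $T$-edges, contradicting Lemma~\ref{Talmostbalanced}(ii).
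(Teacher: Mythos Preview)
Your reduction to bad edges in the second paragraph is correct: since $\Gamma_T(x,z)$ lies in a single part, one of $xzw,xzy$ is in $B$, and the double count gives at most $O(n\,\vert B_x\vert)$ cross-cherries at $x$. The gap is in the third paragraph. The pointwise bound $\vert B_x\vert=o(n^2)$ that you need is exactly Lemma~\ref{nobigbaddegree}, and in the paper's logical order that lemma rests on Lemma~\ref{linkonlyonevi}, which in turn invokes Lemma~\ref{linkdiffparts} itself; so you cannot simply cite it. Your proposed independent derivation via Lemma~\ref{linkatmostonevi} and a K\H{o}v\'ari--S\'os--Tur\'an extraction is not a proof as it stands: for $x\in V_1$, say, bad $G_x$-edges include the \emph{internal} type $V_2V_2$, not only cross types, and even for a genuine cross piece such as $G_x[V_1,V_3]$ it is unclear how a constant-size $K_{s,s}$ embeds $F_{3,2}$ or manufactures $\Omega(n^3)$ missing $T$-edges. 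Filling this in seems to require essentially the chain of Lemmas~\ref{linkonlyonevi}--\ref{nobigbaddegree}, defeating the purpose. (Your first paragraph --- the near-disjointness of $\Gamma(x,z)$ and $\Gamma(w,y)$ --- is correct but is never used in what follows.)

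The paper sidesteps the whole issue by counting \emph{missing} edges instead of bad ones. Assuming $\Omega(n^3)$ cross-cherries, pigeonhole gives $\Omega(n^3)$ of them with $w\in V_{i-1}$ and $y\in V_i$ for some fixed $i$. Because each factor $\vert\Gamma_{G_x}(z)\cap V_j\vert$ is at most $n$, a positive proportion of these cherries have centre $z$ with $\vert\Gamma_{G_x}(z)\cap V_{i-1}\vert=\Omega(n)$; choosing a second $G_x$-neighbour $v\in V_{i-1}$ of $z$ yields $\Omega(n^4)$ quadruples $(v,w,y,z)$ with $vz,wz,yz\in E(G_x)$. For each such quadruple the triple $vwy$ is a $T$-edge (two vertices in $V_{i-1}$, one in $V_i$) but cannot lie in $E(G)$, since otherwise $xz\vert vwy$; hence $vwy\in M$. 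Each missing edge is counted in at most $n$ quadruples (choices of $z$), so $\vert M\vert=\Omega(n^3)$, contradicting Lemma~\ref{Talmostbalanced}(ii). This is a two-line argument that needs no pointwise control on $\vert B_x\vert$.
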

\begin{proof}
Pick $x \in V(G)$ and suppose for contradiction that $\Omega(n^3)$ such triples
could be found. Then in particular we can find $\Omega(n^4)$ quadruples
$v,w,y,z$ such that $vz, wz$ and $yz$ all lie in $E(G_x)$ and $y \in V_{i}$,
$v,w \in V_{i-1}$ for some $i\in \{1,2,3\}$.

For each such quadruple, the triple $vwy$ is missing from $G$ and lies in $M$
(for otherwise we would have $xz \vert vwy$). As before, each such triple is
counted in at most $n$ quadruples, giving $\vert M\vert = \Omega(n^3)$ missing
edges and contradicting part (ii) of Lemma~\ref{Talmostbalanced}.
\end{proof}

\begin{lemma}\label{linkonlyonevi}
For every $x \in V(G)$, exactly one of $V_1$, $V_2$, $V_3$ contains
$\Omega(n^2)$ $2$-edges of $G_x$.
\end{lemma}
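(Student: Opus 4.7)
My plan is to prove by contradiction the assertion that at least one $V_i$ contains $\Omega(n^2)$ edges of $G_x$; combined with Lemma~\ref{linkatmostonevi}, this gives ``exactly one''. So suppose for contradiction that $e(G_x[V_i]) = o(n^2)$ for every $i \in \{1,2,3\}$.

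First I would record the basic degree bound $e(G_x) = d_G(x) \geq \tfrac12(n-1)\delta_2(G) = (\tfrac{1}{6}+o(1))n^2$. Writing $a_i(z) = |\Gamma_{G_x}(z) \cap V_i|$, a Cauchy--Schwarz bound on $\sum_z d_{G_x}(z)^2$ combined with Lemma~\ref{linkdiffparts} yields $\sum_z \sum_i a_i(z)^2 \geq n^3/9 - o(n^3)$. From this and $\sum_i a_i(z) \geq (1/3+o(1))n$, a short per-vertex analysis shows that for all but $o(n)$ vertices $z$ (call such $z$ \emph{typical}), there is some $\phi(z) \in \{1,2,3\}$ with $a_{\phi(z)}(z) = (1/3+o(1))n$ and $a_k(z) = o(n)$ for $k \neq \phi(z)$. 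Setting $w_{ij} = |\{z \in V_j \text{ typical}: \phi(z)=i\}|$, the contradiction hypothesis together with the inequality $2e(G_x[V_i]) \geq w_{ii}(n/3+o(n))$ forces $w_{ii} = o(n)$; a parallel double count gives $e(G_x[V_i,V_j]) = w_{ij}\, n/3 + o(n^2)$ for $i \neq j$. Combining this with the symmetry $e(G_x[V_i,V_j]) = e(G_x[V_j,V_i])$, with $\sum_i w_{ij} = n/3 + o(n)$ (via Lemma~\ref{Talmostbalanced}(iii) and $w_{jj}=o(n)$), and solving the resulting linear system yields $w_{ij} = n/6 + o(n)$ for every $i \neq j$.

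I would then extract the contradiction by bounding $a_i(z)$ in two incompatible ways. Fix $i \neq j$ and let $k$ be the remaining index. Any $z \in W_i^j := \{z \in V_j \text{ typical}: \phi(z) = i\}$ satisfies $a_i(z) \geq (1/3+o(1))n$ by construction. On the other hand, partitioning $V_i = W_i^i \cup W_j^i \cup W_k^i \cup (\text{atypical})$, a typical $y \in W_k^i$ has $\phi(y)=k$ and hence $a_j(y) = o(n)$; summing gives $\sum_{y \in W_k^i} a_j(y) = o(n^2)$, and Markov's inequality then guarantees that for any fixed $\varepsilon > 0$, all but $o(n)$ vertices $z \in V_j$ satisfy $|\{y \in W_k^i : yz \in G_x\}| < \varepsilon n$. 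Since $|W_i^j| = n/6+o(n)$ exceeds this exceptional set, some $z \in W_i^j$ satisfies
\[
a_i(z) \leq |W_i^i| + |W_j^i| + \varepsilon n + o(n) = n/6 + \varepsilon n + o(n).
\]
Choosing $\varepsilon < 1/6$ makes this incompatible with $a_i(z) \geq (1/3+o(1))n$ for $n$ large, yielding the desired contradiction.

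The main technical obstacle is the bookkeeping of the $o(n)$ error terms throughout: the final step rests on the gap $n/3 - n/6 = n/6$ being of linear order, so I would need to make ``typical'' quantitative (for instance via an explicit threshold $\tau(n) \to 0$ in a defining inequality of the form $\sum_{i \neq j} a_i(z) a_j(z) \leq \tau(n) n^2$), and apply Markov's inequality with a fixed small $\varepsilon$, so that every $o(n)$ in the chain of inequalities represents a uniform quantity tending to $0$ as $n \to \infty$.
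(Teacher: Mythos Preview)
Your argument is correct and follows a genuinely different route from the paper's proof. Both proofs start the same way: assume all three $e(G_x[V_i])$ are $o(n^2)$ and use Lemma~\ref{linkdiffparts} to see that almost every vertex $z$ has its $G_x$-neighbourhood concentrated in a single part $V_{\phi(z)}$. From there the paths diverge.

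The paper defines, for each $i$, the sets $V_i'$ and $V_i''$ of vertices in $V_i$ whose dominant direction is $V_{i-1}$ or $V_{i+1}$ respectively, and then proceeds by a direct case analysis on whether $|V_1'|$ and $|V_1''|$ are both $\Omega(n)$ or not, propagating constraints via the codegree condition (certain bipartite pieces of $G_x$ are forced to be almost complete, which in turn forces other $V_j'$ or $V_j''$ to be small, eventually contradicting $\delta_2(G)\ge n/3+o(n)$). You instead encode the situation in the matrix $(w_{ij})$, observe that the contradiction hypothesis forces the diagonal $w_{ii}=o(n)$, solve the resulting symmetric linear system to pin down $w_{ij}=n/6+o(n)$ for all $i\neq j$, and extract the contradiction by a single clean double count showing that a typical $z\in W_i^j$ would need $a_i(z)\approx n/3$ neighbours inside a set that can only supply $n/6+\varepsilon n$ of them. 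Your approach is more symmetric and avoids any case split, at the cost of a little more bookkeeping (the linear system and the careful handling of thresholds that you flag in your last paragraph); the paper's approach is more hands-on but gets there faster. Either works.
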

\begin{proof}
Pick $x \in V(G)$. By Lemma~\ref{linkatmostonevi}, we know that at most one of
$e(G_x[V_1])$, $e(G_x[V_2])$ and $e(G_x[V_3])$ may be of order $\Omega(n^2)$.
Assume for contradiction that all three are of order $o(n^2)$. Then for every
$i$, all but $o(n)$ vertices in $V_i$ have $o(n)$ neighbours in $G_x[V_i]$.

Lemma~\ref{linkdiffparts} implies that for all but $o(n)$ vertices $z\in V_i$ at least one of $\Gamma(x,z)\cap V_{i+1}$, $\Gamma(x,z)\cap V_{i-1}$ has size $o(n)$.
Thus we can partition all but $o(n)$ vertices of $V_i$ into two parts $V_i'$ and $V_i''$ satisfying the
following:
\begin{itemize}
\item for every $z \in V_i'$, there are at most $o(n)$ $y \in V_i\cup V_{i+1}$ such
that $yz \in E(G_x)$;
\item for every $z \in V_i''$, there are at most $o(n)$ $y \in V_{i-1}\cup V_i$
such that $yz \in E(G_x)$.
\end{itemize}
Since for every $z \in V(G)$ the codegree of $x$ and $z$ in $G$ is at least
$n/3+o(n)$, since by Lemma~\ref{Talmostbalanced} we have $\vert V_i \vert= n/3
+o(n)$ for $i=1,2, 3$, and since $e(G_x[V_i])=o(n^2)$ by assumption, it follows
that for every $i$ the following hold:
\begin{itemize}
\item $G_x[V_{i-1}, V_i']$ is almost complete bipartite (contains all but
$o(n^2)$ of the possible $2$-edges);
\item $G_x[V_i'', V_{i+1}]$ is almost complete bipartite (contains all but
$o(n^2)$ of the possible $2$-edges).
\end{itemize}
Now if $V_1'$ contained $\Omega(n)$ vertices then almost all vertices in $V_{3}$
send $\Omega(n)$ edges to $V_1'\subseteq V_1$. If follows in particular that
$\vert V_{3}'\vert =o(n)$.  Similarly, if $V_1''$ contained $\Omega(n)$ vertices
then it would follow that $\vert V_{2}''\vert=o(n)$.

Thus if both $V_1'$ and $V_1''$ contained $\Omega(n)$ vertices, then there would
be only $o(n^2)$ edges of $G_x$ between $V_{2}$ and $V_{3}$. Since we are also
assuming that $V_{3}$ contains only $o(n^2)$ edges of $G_x$, it follows that the
average degree in $G_x$ of vertices in $V_{3}$ is at most $\vert V_1' \vert
+o(n)$. But now since $\vert V_1 \vert=n/3+o(n)$, and since $V_1'$ and $V_1''$
are disjoint subsets of $V_1$ both containing $\Omega(n)$ vertices, it follows
that this average degree is at most $(1-c)n/3+o(n)$ for some strictly positive
constant $c>0$. For $n$ sufficiently large, this contradicts the fact that the
minimal codegree in $G$ is at least $n/3+o(n)$ (since the degree of a vertex in $G_x$ is its codegree with $x$ in $G$).

On the other hand if we had, for example, $\vert V_1'\vert = \vert V_1 \vert
+o(n)$ then all but $o(n)$ vertices from $V_3$ send $\Omega(n)$ edges to $V_1$
in $G_x$, so that $\vert V_3 \vert = \vert V_3'' \vert +o(n)$. But now by
definition of $V_1'$ and $V_3''$, there are only $o(n^2)$ edges of $G_x$ from
$V_1 \cup V_3$ to $V_2$. Since we are assuming that $e(G_x[V_2])=o(n^2)$ this
implies in particular that all but $o(n)$ vertices in $V_2$ have degree $o(n)$
in $G_x$, which again contradicts the fact that $\delta_2(G) \geq n/3+o(n)$.
\end{proof}
\begin{lemma}\label{linkviorvivi+1}
For every $x \in V(G)$ and every $i\in\{1,2,3\}$ we have $e(G_x[V_i])=o(n^2)$ or
$e(G_x[V_i, V_{i+1}])=o(n^2)$.
\end{lemma}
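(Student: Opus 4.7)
The plan is to assume the contrary and derive a contradiction by constructing many copies of $F_{3,2}$ in $G$. Suppose that $e(G_x[V_i]) = \Omega(n^2)$ and $e(G_x[V_i, V_{i+1}]) = \Omega(n^2)$. Then the number of pairs consisting of an edge $\{y_1, y_2\} \in E(G_x[V_i])$ and an edge $\{z_1, z_2\} \in E(G_x[V_i, V_{i+1}])$ (with $z_1 \in V_i$, $z_2 \in V_{i+1}$) is $\Omega(n^4)$. Since $x$ never appears as an endpoint of a $G_x$-edge, and vertices in distinct parts cannot coincide, the only possible overlap among $\{x, y_1, y_2, z_1, z_2\}$ is $y_j = z_1$ for some $j$, which occurs for at most $O(n^3)$ pairs; thus $\Omega(n^4)$ pairs still yield five distinct vertices.

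The crucial step is choosing the right embedding of $F_{3,2}$ into the candidate $5$-set $\{x, y_1, y_2, z_1, z_2\}$. I would set $a = z_1$, $b = z_2$, $c = x$, $d = y_1$, $e = y_2$, so the desired configuration is $z_1 z_2 \vert x y_1 y_2$: the four required $3$-edges are $x z_1 z_2$, $z_1 z_2 y_1$, $z_1 z_2 y_2$, $x y_1 y_2$. Two of these, $x z_1 z_2$ and $x y_1 y_2$, are automatic from the hypotheses $z_1 z_2, y_1 y_2 \in E(G_x)$. The remaining two triples $z_1 z_2 y_1$ and $z_1 z_2 y_2$ are of type $V_i V_i V_{i+1}$, hence belong to $E(T)$, and therefore lie in $E(G)$ unless they belong to the missing set $M$.

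Next I would bound the number of ``blocked'' pairs, namely those for which at least one of $z_1 z_2 y_1, z_1 z_2 y_2$ lies in $M$. Each missing edge $m = \{u_1, u_2, v\}$ of type $V_i V_i V_{i+1}$ (with $u_1, u_2 \in V_i$, $v \in V_{i+1}$) accounts for at most $O(n)$ pairs: the pair $\{z_1, z_2\}$ must equal $\{u_a, v\}$ for some $a \in \{1, 2\}$, which forces $y_j = u_{3-a}$, and the remaining endpoint of $\{y_1, y_2\}$ has at most $\vert V_i \vert = O(n)$ choices. Since Lemma~\ref{Talmostbalanced}(ii) gives $\vert M \vert = o(n^3)$, the total number of blocked pairs is $O(n \vert M \vert) = o(n^4)$. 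Consequently $\Omega(n^4)$ pairs yield genuine copies of $F_{3,2}$ in $G$, contradicting $F_{3,2}$-freeness.

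The main obstacle---and indeed the central idea---is the choice of embedding. An alternative labelling with $\{a, b\} = \{y_1, y_2\}$ would require one to check a $V_i V_i V_i$-type triple, which lies in $E(G)$ only via the bad set $B$; this would turn the question from ``does a generic edge fail to be present?'' into ``does a rare edge happen to be present?'', which cannot yield a contradiction from $\vert B \vert = o(n^3)$ alone. Placing both auxiliary triples into the generic $V_i V_i V_{i+1}$ slot---so that the failure condition is the rare one---is what makes the counting quantitatively effective.
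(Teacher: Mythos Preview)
Your proof is correct and essentially identical to the paper's: with the dictionary $v,w \leftrightarrow y_1,y_2$ and $y,z \leftrightarrow z_1,z_2$, the paper also considers the potential copy $yz\vert vwx$, observes that the two uncertain triples $vyz,wyz$ are of type $V_iV_iV_{i+1}$ and hence lie in $E(T)$, and concludes that each missing edge accounts for $O(n)$ quadruples, forcing $\vert M\vert=\Omega(n^3)$ in contradiction to Lemma~\ref{Talmostbalanced}(ii). The only cosmetic difference is that the paper phrases the contradiction as ``$\vert M\vert$ is too large'' rather than ``too many unblocked $F_{3,2}$ copies survive,'' but these are contrapositives of one another.
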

\begin{proof}
Pick $x \in V(G)$ and suppose the claim of the lemma does not hold for some $i$.
Then we have $\Omega(n^4)$ possible choices of a quadruple $\{v,w,y,z\}$ with
$vw \in E(G_x[V_i])$ and $yz \in E(G_x[V_i, V_{i+1}])$. For each such choice, at
least one of the triples $vyz$, $wyz$ is missing from $G$ and lies in $M$ (for
otherwise we would have $yz\vert vwx$).

Each such forbidden triple is counted in at most $n$ quadruples, so, just as in
Lemmas~\ref{linkatmostonevi} and~\ref{linkdiffparts}, this implies $\vert M
\vert = \Omega(n^3)$, contradicting Lemma~\ref{Talmostbalanced} part (ii).
\end{proof}

With these lemmas in hand, we can now show that $G$ has no vertex of high
\emph{bad} or \emph{missing} degree, where the bad degree $d_B(x)$ is just the
number of bad $3$-edges incident with $x$ while the missing degree $d_M(x)$ is
the number of $3$-edges from $M$ incident with $x$. 

\begin{lemma}\label{nobigbaddegree}
For every $x \in V(G)$, $d_B(x)=o(n^2)$.
\end{lemma}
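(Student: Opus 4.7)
Suppose, for contradiction, that $d_B(x) \geq \beta n^2$ for some constant $\beta > 0$ and some $x \in V(G)$; without loss of generality $x \in V_1$. Write $e_{ij}(x)$ for the number of link-pairs of $x$ in $G_x$ lying in $V_i \times V_j$ (or in $V_i^{(2)}$ if $i=j$), and recall that the \emph{good} link-pair types for $x \in V_1$ are those in $V_1 \times V_2$ (coming from $V_1V_1V_2$-edges of $T$) and in $V_3^{(2)}$ (from $V_3V_3V_1$-edges), with every other configuration contributing to $d_B(x)$. By Lemma~\ref{linkonlyonevi} the link graph $G_x$ has a unique \emph{main part} $V_j$ with $e_{jj}(x) = \Omega(n^2)$, and by Lemma~\ref{linkviorvivi+1} the associated sibling count $e(G_x[V_j, V_{j+1}])$ is $o(n^2)$. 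The plan is a case analysis on $j$, with $V_3$ being the expected main part for $x \in V_1$.

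For the mismatched cases $j \in \{1,2\}$, the partition is visibly suboptimal at $x$. If $j = 1$, then $e_{22}, e_{33}, e_{12} = o(n^2)$ while $e_{11}(x) = \Omega(n^2)$, and a direct computation gives $b_1(x) - b_2(x) = (e_{11}(x) + e_{23}(x)) - (e_{12}(x) + e_{33}(x)) = \Omega(n^2) > 0$, so moving $x$ from $V_1$ to $V_2$ strictly reduces $|B|$, contradicting the minimality of the partition. If $j = 2$, minimality ($b_1(x) \leq b_3(x)$) forces $e_{12}(x) \geq e_{22}(x) - o(n^2) = \Omega(n^2)$; one can then select $v^* \in V_2$ whose $G_x$-neighbourhood meets both $V_1$ and $V_2$ in $\Omega(n)$ vertices, and since $\Gamma(x, v^*)$ must be independent in $G$ (because $G$ is $F_{3,2}$-free), every triple $\{y, y', v'\}$ with $y, y' \in \Gamma(x, v^*) \cap V_1$ and $v' \in \Gamma(x, v^*) \cap V_2$ is a $V_1V_1V_2$-edge of $T$ forced to lie in $M$, yielding $\Omega(n^3)$ missing edges and contradicting Lemma~\ref{Talmostbalanced}(ii).

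The remaining case $j = 3$ is the main obstacle, because here the swap inequalities are all automatically satisfied and so minimality alone does not constrain $e_{23}(x)$. Now $e_{11}, e_{22}, e_{13} = o(n^2)$, so $d_B(x) = e_{23}(x) + o(n^2)$ and hence $e_{23}(x) = \Omega(n^2)$. The key step is the subclaim $m_{12}(x) = o(n^2)$: if $m_{12}(x) \geq \varepsilon n^2$, a Markov-style argument produces $\Omega(n)$ vertices $y \in V_1$ each with $\Omega(n)$ missing $V_1V_1V_2$-edges at the pair $(x, y)$; the codegree bound $\delta_2(G) \geq (1/3 - o(1))n$ then forces each such pair to have $\Omega(n)$ $G$-neighbours in $V_1 \cup V_3$, contributing bad edges of type $V_1V_1V_1$ or $V_1V_1V_3$ at $x$, and summing gives $e_{11}(x) + e_{13}(x) = \Omega(n^2)$, contradicting the case hypothesis.

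With $m_{12}(x) = o(n^2)$ in hand, a second averaging picks $v^* \in V_2$ with $|\Gamma_{G_x}(v^*) \cap V_3| = \Omega(n)$ (from $e_{23}(x) = \Omega(n^2)$) and $|M(x, v^*)| = o(n)$ (from $m_{12}(x) = o(n^2)$). Then $|\Gamma(x, v^*) \cap V_3| = \Omega(n)$ and $|\Gamma(x, v^*) \cap V_1| = |V_1| - 1 - |M(x, v^*)| = n/3 - o(n)$. Since $G$ is $F_{3,2}$-free, $\Gamma(x, v^*)$ is independent, so every triple $\{u, u', y\}$ with $u, u' \in \Gamma(x, v^*) \cap V_3$ and $y \in \Gamma(x, v^*) \cap V_1$ is a $V_3V_3V_1$-edge of $T$ forced to lie in $M$. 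This produces $\binom{\Omega(n)}{2} \cdot (n/3 - o(n)) = \Omega(n^3)$ distinct missing edges, contradicting Lemma~\ref{Talmostbalanced}(ii) and completing the proof.
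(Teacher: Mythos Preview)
Your overall structure is sound and cases $j=1$ and $j=3$ go through, but there is a genuine gap in case $j=2$. You assert that ``one can then select $v^* \in V_2$ whose $G_x$-neighbourhood meets both $V_1$ and $V_2$ in $\Omega(n)$ vertices,'' but this does not follow from $e_{12}(x), e_{22}(x) = \Omega(n^2)$ alone: these two edge counts could in principle be supported on essentially disjoint subsets of $V_2$. In fact, combining the codegree condition (which, since $e_{23}=o(n^2)$, gives $D_1(v)+D_2(v)\ge n/3-o(n)$ for all but $o(n)$ vertices $v\in V_2$, where $D_i(v)=|N_{G_x}(v)\cap V_i|$) with Lemma~\ref{linkdiffparts} (which bounds $\sum_{v\in V_2} D_1(v)D_2(v)=o(n^3)$), one sees that almost every $v\in V_2$ has $\min(D_1(v),D_2(v))=o(n)$; so a vertex of the type you require need not exist at all. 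The case \emph{is} contradictory, but for a different reason: arguing via codegree for $y\in V_3$ exactly as you did for the subclaim $m_{12}(x)=o(n^2)$ in case $j=3$, one finds that $G_x[V_3,V_1]$ is almost complete, and then Lemma~\ref{linkdiffparts} (paths through $V_1$) forces $e_{12}=o(n^2)$, contradicting the bound $e_{12}=\Omega(n^2)$ you correctly derived from minimality.

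The paper's proof sidesteps the whole case split by reversing the order of the two choices: rather than fixing $x\in V_1$ and casing on the main part $V_j$, it fixes the main part (without loss of generality $V_3$), shows directly that $G_x$ consists of $V_3$-internal edges plus an almost-complete $V_1$--$V_2$ bipartite graph (via codegree and then a single application of Lemma~\ref{linkdiffparts} to kill $e_{23}$), and only \emph{then} invokes minimality of the partition to conclude that $x$ must lie in $V_1$. This handles all three of your cases at once. Your case $j=3$ is essentially a reworking of this argument with the independent-neighbourhood trick replacing the final appeal to Lemma~\ref{linkdiffparts}; it is correct but longer, and leaves you needing separate arguments for $j=1,2$.
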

\begin{proof}
Pick $x \in V(G)$. By Lemma~\ref{linkonlyonevi}, we may assume without loss of
generality that $e(G_x[V_1])$ and $e(G_x[V_2])$ are both $o(n^2)$, while
$e(G_x[V_3])=\Omega(n^2)$, just as would expect it to be if $G$ was a subgraph
of $T_{V_1,V_2,V_3}$ and $x$ was chosen from $V_1$.

By Lemma~\ref{linkviorvivi+1}, we then know that $e(G_x[V_3, V_1])=o(n^2)$. Thus
for $y \in V_1$ there are on average only $o(n)$ edges of $G_x$ joining $y$ to
vertices in $V_1\cup V_3$. On the other hand we know from the codegree condition
on $G$ that for every $y \in V_1$ the joint neighbourhood of $x$ and $y$ has
size at least $n/3+o(n)$. Since $\vert V_2\vert=n/3+o(n)$
(Lemma~\ref{Talmostbalanced}, part (iii)), it follows that for all but $o(n)$ vertices $y \in
V_1$, $y$ is adjacent in $G_x$ to all but at most $o(n)$ vertices $z \in V_2$. In particular,
$G_x[V_1,V_2]$ is almost complete: at most $o(n^2)$ of the possible edges
between $V_1$ and $V_2$ are missing.

This and Lemma~\ref{linkdiffparts} imply that $e(G_x[V_2,V_3])=o(n^2)$. Thus all but $o(n^2)$
edges of $G_x$ are internal to $V_3$ or lie between $V_1$ and $V_2$. If $x \in
V_1$ then $d_B(x)=o(n^2)$, whereas if $x \in V_2\cup V_3$, we would have $d_B(x)
=\Omega(n^2)$. Since our partition $V_1\cup V_2 \cup V_3$ was chosen to minimise
the number of bad $3$-edges, it must be that $x$ was assigned to $V_1$. The
claim of the lemma thus holds for~$x$.
\end{proof}

\begin{lemma}\label{nobigmissingdegree}
For every $x \in V(G)$, $d_M(x)=o(n^2)$
\end{lemma}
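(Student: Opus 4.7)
The plan is to show that the link graph $G_x$ contains essentially all of the $T$-edges through $x$, by combining the structural information obtained inside the proof of Lemma~\ref{nobigbaddegree} with the minimum codegree lower bound.

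Fix $x\in V(G)$. The previous lemma (via its choice of partition $V_1\cup V_2\cup V_3$ to minimise bad $3$-edges) allows us to assume without loss of generality that $x\in V_1$; moreover the proof of Lemma~\ref{nobigbaddegree} actually established the following stronger facts about the link graph $G_x$:
\begin{enumerate}[(a)]
\item $e(G_x[V_1])$ and $e(G_x[V_2])$ are $o(n^2)$, while $e(G_x[V_3])=\Omega(n^2)$;
\item $e(G_x[V_1,V_3])$ and $e(G_x[V_2,V_3])$ are $o(n^2)$ (the former by Lemma~\ref{linkviorvivi+1}, the latter by Lemma~\ref{linkdiffparts});
\item $G_x[V_1,V_2]$ is almost complete bipartite, missing at most $o(n^2)$ of its possible edges.
\end{enumerate}

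The $3$-edges of $T=T_{V_1,V_2,V_3}$ through $x\in V_1$ are precisely those of the form $\{x,y,z\}$ with $y\in V_1\setminus\{x\}$ and $z\in V_2$, together with those of the form $\{x,y,z\}$ with $y,z\in V_3$. Thus $d_M(x)$ equals the number of non-edges of $G_x$ between $V_1$ and $V_2$ plus the number of non-edges of $G_x$ inside $V_3$. The first quantity is $o(n^2)$ by (c), so it only remains to show that $G_x[V_3]$ contains all but $o(n^2)$ of the $\binom{|V_3|}{2}$ possible $2$-edges.

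For this final step I would sum the codegree bound over $V_3$: for every $z\in V_3$ the degree of $z$ in $G_x$ equals $d_G(x,z)\geq n/3+o(n)$, so
\[
\sum_{z\in V_3} d_{G_x}(z) \;\geq\; |V_3|\,(n/3+o(n)) \;=\; \tfrac{n^2}{9}+o(n^2),
\]
using $|V_3|=n/3+o(n)$ from Lemma~\ref{Talmostbalanced}(iii). On the other hand this sum is exactly $e(G_x[V_1,V_3])+e(G_x[V_2,V_3])+2e(G_x[V_3])$, and the first two terms are $o(n^2)$ by (b). Hence $e(G_x[V_3])\geq n^2/18+o(n^2)=\binom{|V_3|}{2}-o(n^2)$, so at most $o(n^2)$ pairs inside $V_3$ are non-edges of $G_x$. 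Combining with the $V_1V_2$-estimate yields $d_M(x)=o(n^2)$.

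The argument is really just careful bookkeeping; no new combinatorial idea is needed beyond extracting the link-graph structure from the previous lemma. The only mild subtlety is ensuring the codegree lower bound is tight enough to force $e(G_x[V_3])$ up to its maximum possible value of $\binom{|V_3|}{2}+o(n^2)$, which works precisely because all the other edge counts in $G_x$ are negligible and $|V_3|$ is already as small as $n/3+o(n)$.
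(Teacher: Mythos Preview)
Your argument is correct, but the paper takes a shorter and more self-contained route. Rather than reaching back into the proof of Lemma~\ref{nobigbaddegree} to extract the structural facts (a)--(c) about $G_x$, the paper simply uses the identity
\[
d_M(x)=d_B(x)+d_T(x)-e(G_x),
\]
which holds because the edges of $G_x$ are exactly the $T$-edges through $x$ minus the missing ones plus the bad ones. Since the parts are balanced, $d_T(x)=n^2/6+o(n^2)$; summing the codegree condition over \emph{all} of $V(G)$ gives $e(G_x)\geq n^2/6+o(n^2)$; and $d_B(x)=o(n^2)$ is the content of Lemma~\ref{nobigbaddegree}. The bound $d_M(x)=o(n^2)$ drops out immediately.

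Your approach has the virtue of actually pinning down \emph{where} the link graph $G_x$ lives (almost complete on $V_3$ and on $V_1\times V_2$, negligible elsewhere), which is more informative structurally. The paper's approach has the virtue of using only the \emph{statement} of Lemma~\ref{nobigbaddegree} rather than intermediate facts from its proof, and it avoids the separate case analysis for the $V_3$-part. Both are perfectly valid; the counting identity is just the cleaner packaging.
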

\begin{proof}
Pick $x \in V(G)$, and write $d_T(x)$ for the number of $3$-edges of
$T=T_{V_1,V_2,V_3}$ containing $x$. Since by Lemma~\ref{Talmostbalanced} we have
$\vert V_i \vert = n/3+o(n)$ for $i=1,2,3$, it readily follows that $d_T(x)=
n^2/6+o(n^2)$.

Now the codegree condition $\delta_2(G)\geq n/3+o(n)$ tells us that every $y \in
V(G) \setminus\{x\}$ is incident with at least $n/3+o(n)$ edges in $G_x$. It
follows in particular that
\[e(G_x) = \frac{1}{2}\sum_y d(x,y)\geq \frac{n^2}{6}+o(n^2).\]
Thus
 $$
d_M(x)=d_B(x)+d_T(x)- e(G_x)\leq d_B(x)+o(n^2),
 $$
which by Lemma~\ref{nobigbaddegree} is $o(n^2)$, as desired.
\end{proof}

We can now show that in fact all bad edges are \emph{tripartite}, i.e.\ meet
each of $V_1$, $V_2$ and $V_3$ in one vertex.
\begin{lemma}\label{nointernalbad}
For every $i \in \{1,2,3\}$, $V_i$ is an independent set in $G$.
\end{lemma}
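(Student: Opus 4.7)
The plan is to argue by contradiction: assume that some $V_i$ contains an edge $xyz \in E(G)$ and exhibit an explicit copy of $F_{3,2}$ in $G$, contradicting $F_{3,2}$-freeness.

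The key structural input I extract from the preceding link-graph analysis is the following: for every $w \in V_i$, the bipartite link graph $G_w[V_i, V_{i+1}]$ is almost complete, missing at most $d_M(w) = o(n^2)$ pairs. Indeed, any absent pair $(u,v) \in (V_i \setminus \{w\}) \times V_{i+1}$ with $uvw \notin E(G)$ corresponds to a $V_iV_iV_{i+1}$-edge of $T$ not present in $G$, i.e.\ an edge of $M$ incident to $w$. Hence the number of such absent pairs is bounded by $d_M(w)$, which is $o(n^2)$ by Lemma~\ref{nobigmissingdegree}. (This combines the structure identified in the proof of Lemma~\ref{nobigbaddegree}, namely that for $w\in V_i$ the link graph concentrates on $V_i\times V_{i+1}$ and $V_{i-1}^{(2)}$, with the global missing-degree bound.)

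Given the hypothetical bad edge $xyz$ with $x,y,z\in V_i$, let $\mathcal{P}$ be the set of pairs $(u,v) \in (V_i \setminus \{x,y,z\}) \times V_{i+1}$ such that each of $uvx, uvy, uvz$ is a $3$-edge of $G$. By Lemma~\ref{Talmostbalanced} the total number of candidate pairs is $(\vert V_i\vert - 3)\vert V_{i+1}\vert = n^2/9 + o(n^2)$, and by a union bound the number of pairs failing one of the three conditions $uvw \in E(G)$ (for $w\in\{x,y,z\}$) is at most $d_M(x) + d_M(y) + d_M(z) = o(n^2)$. In particular $\vert \mathcal{P} \vert > 0$ for $n$ sufficiently large, so we can pick some $(u,v) \in \mathcal{P}$.

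Then $u, v, x, y, z$ are pairwise distinct: $u$ is distinct from $x,y,z$ by choice, $v \in V_{i+1}$ lies in a different part from $u,x,y,z$, and $x,y,z$ are distinct by assumption. Moreover, $uvx, uvy, uvz, xyz$ all belong to $E(G)$, so $\{u,v,x,y,z\}$ spans a copy of $F_{3,2}$ (with $\{u,v\}$ playing the role of the apex pair and $\{x,y,z\}$ the distinguished $3$-edge), contradicting the $F_{3,2}$-freeness of $G$. The argument is short; the only real subtlety is choosing the correct ``slot'' $V_i\times V_{i+1}$ in which to search for $(u,v)$ so that all three triples $uvx, uvy, uvz$ come from the tripartite template $T$ simultaneously, which is exactly what makes each of the three link graphs $G_x[V_i,V_{i+1}]$, $G_y[V_i,V_{i+1}]$, $G_z[V_i,V_{i+1}]$ almost complete.
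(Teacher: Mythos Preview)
Your proof is correct and follows essentially the same approach as the paper: both arguments use the bound $d_M(w)=o(n^2)$ from Lemma~\ref{nobigmissingdegree} to locate a pair $(u,v)$ for which $uvx,uvy,uvz,xyz\in E(G)$, yielding $uv\vert xyz$. The only cosmetic difference is the choice of slot: the paper takes $u,v\in V_{i-1}$ (so that $uvx,uvy,uvz$ are $V_{i-1}V_{i-1}V_i$-edges of $T$) and phrases the argument as ``each pair $uv$ forces a missing edge, hence some $d_M(w)$ is large'', whereas you take $(u,v)\in V_i\times V_{i+1}$ and phrase it as ``since each $d_M(w)$ is small, some pair survives''.
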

\begin{proof}
Suppose for contradiction that we had a $3$-edge of $G$ entirely contained
within $V_i$ for some $i$. Without loss of generality, we may assume that we
have $\{x,y,z\} \in E(G)$ with all of $x,y,z$ lying in $V_1$. Then for every
pair $u,v$ from $V_3$, we have that at least one of the triples $uvx$, $uvy$,
$uvz$ is missing from $G$, for otherwise $uv \vert xyz$. There are $n^2/18
+o(n)$ such pairs $uv$ (since $\vert V_3 \vert =n/3+o(n)$). It follows that at
least one of $\{x,y,z\}$ has missing degree at least $n^2/54+o(n)$. This
contradicts Lemma~\ref{nobigmissingdegree}.
\end{proof}
\begin{lemma}\label{nobipartitebad}
For every $i \in \{1,2,3\}$, there are no $3$-edges with two vertices in $V_i$
and one in $V_{i-1}$.
\end{lemma}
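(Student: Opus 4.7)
The plan is as follows. Suppose for contradiction that there exist $x, y \in V_i$ and $z \in V_{i-1}$ with $\{x, y, z\} \in E(G)$; by relabelling, I assume $i = 1$, so $x, y \in V_1$ and $z \in V_3$. I would mimic the strategy of Lemma~\ref{nointernalbad}, but iterated twice: the first pass uses $F_{3,2}$-freeness to force $\Gamma(x,y)$ to lie almost entirely in $V_3$, and the second pass uses this to produce $\Omega(n^2)$ bad edges through $x$, contradicting Lemma~\ref{nobigbaddegree} (which gives $d_B(x) = o(n^2)$).

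For the first pass, I would show $|\Gamma(x,y) \cap V_3| \geq n/3 - o(n)$. By Lemma~\ref{nointernalbad} we have $\Gamma(x,y) \subseteq V_2 \cup V_3$, so it suffices to bound $|\Gamma(x,y) \cap V_2|$. For any two vertices $u, v \in \Gamma(x, y) \cap V_2$, the five vertices $x, y, u, v, z$ span $F_{3,2}$ via $xy \vert uvz$ unless $uvz \notin E(G)$; but $uvz$ is a $V_2 V_2 V_3$ triple and hence lies in $T$, so $uvz \notin E(G)$ means $uvz$ is a missing triple. These missing triples are distinct and all pass through $z$, so $\binom{|\Gamma(x,y) \cap V_2|}{2} \leq d_M(z) = o(n^2)$ by Lemma~\ref{nobigmissingdegree}, giving $|\Gamma(x,y) \cap V_2| = o(n)$. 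Combined with the codegree bound $|\Gamma(x,y)| \geq n/3 + o(n)$, this yields the claim.

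For the second pass, fix any $z' \in \Gamma(x, y) \cap V_3$ (of which there are now $\geq n/3 - o(n)$). The triple $\{x, y, z'\}$ is itself a bad edge of the same form as $\{x, y, z\}$, and I would run an analogous argument with the roles of the spine-pair and the arms reshuffled: for $u, v \in \Gamma(x, z') \cap V_3$, the configuration $xz' \vert uvy$ spans $F_{3,2}$ unless $yuv$ is missing; since $yuv$ is a $V_1V_3V_3$ triple in $T$, Lemma~\ref{nobigmissingdegree} applied to $y$ now forces $|\Gamma(x, z') \cap V_3| = o(n)$, and hence $|\Gamma(x, z') \cap (V_1 \cup V_2)| \geq n/3 - o(n)$. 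Each such vertex $w$ contributes a bad edge $\{x, z', w\}$ (of type $V_1V_1V_3$ or $V_1V_2V_3$) through the pair $(x, z')$.

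Summing over $z' \in \Gamma(x, y) \cap V_3$ then produces $\sum_{z'} |\Gamma(x, z') \cap (V_1 \cup V_2)| \geq (n/3 - o(n))^2$ incidences $(z', w)$. Any bad edge $\{x, u, v\}$ through $x$ has at most one vertex in $V_3$ (otherwise $\{x, u, v\}$ would be a $V_3V_3V_1$ edge of $T$, hence good), so distinct incidences $(z', w)$ correspond to distinct bad edges through $x$, giving $d_B(x) \geq n^2/9 - o(n^2)$. This contradicts Lemma~\ref{nobigbaddegree}. The main technical hurdle is choosing the $F_{3,2}$-embedding carefully at each stage so as to force missing triples through a specific vertex (where we have the bound $d_M = o(n^2)$) rather than bad edges through a specific pair (for which we have no direct bound).
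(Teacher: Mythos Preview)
Your proof is correct, and the first pass matches the paper's opening move exactly: both show $|\Gamma(x,y)\cap V_2|=o(n)$ by observing that any two vertices $u,v$ in $\Gamma(x,y)\cap V_2$ force the $V_2V_2V_3$-triple $uvz$ to be missing through $z$, and then invoking $d_M(z)=o(n^2)$.

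The second step diverges. The paper takes a more direct route using only missing-degree bounds: having established that almost all of $V_3$ lies in $\Gamma(x,y)$, it picks triples $w,w',w''\in V_3$, uses $d_M(x)=o(n^2)$ to guarantee that (generically) $xww',xww''\in E(G)$, and then the configuration $xw\vert yw'w''$ forces $yw'w''\in M$, yielding $d_M(y)=\Omega(n^2)$ outright. You instead invoke $d_M(y)=o(n^2)$ as an \emph{input} to bound $|\Gamma(x,z')\cap V_3|$ for each $z'\in\Gamma(x,y)\cap V_3$, and then push the codegree mass into $V_1\cup V_2$ to produce $\Omega(n^2)$ bad edges through $x$, finishing via Lemma~\ref{nobigbaddegree} rather than Lemma~\ref{nobigmissingdegree}. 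Both arguments work; the paper's is a shade more economical since it needs only the missing-degree lemma, whereas yours draws on both the missing-degree and bad-degree bounds. On the other hand, your version makes the ``iterate the same trick twice'' structure more transparent, and your injectivity observation (that a bad edge through $x\in V_1$ has at most one vertex in $V_3$, since $V_3V_3V_1$-triples are good) is a clean way to avoid any overcounting.
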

\begin{proof}
Suppose we had such a bad $3$ edge -- without loss of generality $xyz \in E(G)$
with $x,y \in V_3$ and $z \in V_2$. Since $\delta_2(G)\geq n/3+o(n)$, the joint neighbourhood $\Gamma(x,y)$ contains at least $n/3+o(n)$
vertices. We know from Lemma~\ref{nointernalbad} that $\Gamma(x,y) \subseteq V_1
\cup V_2$.

Suppose $\vert \Gamma(x,y) \cap V_1\vert=\Omega(n)$. Then there are
$\Omega(n^2)$ $a,a' \in V_1$ such that $axy$ and $a'xy$ are both in $E(G)$. But
for such pairs, the $3$-edge $aa'z$ is missing from $G$, since otherwise we
would have $xy \vert aa'z$. It follows that $d_M(z)=\Omega(n^2)$, contradicting
Lemma~\ref{nobigmissingdegree}.

We must therefore have $\vert \Gamma(x,y) \cap V_1 \vert =o(n)$ and thus by the
codegree condition $\vert \Gamma(x,y) \cap V_2 \vert = n/3+o(n)$. Now, consider
triples $w,w', w''$ from $V_2$. For all but $o(n^3)$ triples, $xyw$ is in
$E(G)$. Also, since $d_M(x)=o(n^2)$ by Lemma~\ref{nobigmissingdegree}, for all
but $o(n^3)$ of such triples, both of $xww'$ and $xww''$ are in $E(G)$. But then
$w'w''y$ is missing from $G$, as otherwise we would have $xw\vert yw'w''$. This
implies that $d_M(y)=\Omega(n^2)$, contradicting Lemma~\ref{nobigmissingdegree}.

It follows that we cannot have bad $3$-edges taking one vertex in $V_{i-1}$ and
two vertices in $V_{i}$.
\end{proof}

\begin{corollary}\label{upperboundcoex}
\[\delta_2(G)\leq \lfloor n/3 \rfloor.\]
\end{corollary}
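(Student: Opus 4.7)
The plan is to combine the structural information from the preceding lemmas to locate a pair $\{x,y\}$ whose codegree is easily bounded. The key point is that Lemmas~\ref{nointernalbad} and~\ref{nobipartitebad} together pin down the possible neighbours of a pair lying within one part: if $x,y\in V_i$ and $xyz\in E(G)$, then $z\notin V_i$ (by Lemma~\ref{nointernalbad}, since $V_i$ is independent) and $z\notin V_{i-1}$ (by Lemma~\ref{nobipartitebad}, which rules out edges of the form $V_iV_iV_{i-1}$). So $z$ must lie in $V_{i+1}$, yielding $\Gamma_G(x,y)\subseteq V_{i+1}$ and hence $d_G(x,y)\le \vert V_{i+1}\vert$.

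From here the proof is essentially pigeonhole. Since $\vert V_1\vert+\vert V_2\vert+\vert V_3\vert=n$, at least one part, say $V_{k}$, has size at most $\lfloor n/3\rfloor$. Setting $i=k-1\pmod 3$, we have $V_{i+1}=V_k$, and by Lemma~\ref{Talmostbalanced}(iii) the part $V_i$ has size $n/3+o(n)\geq 2$ for $n$ sufficiently large, so we may choose two distinct vertices $x,y\in V_i$. Then
\[
\delta_2(G)\;\le\; d_G(x,y)\;\le\;\vert V_{i+1}\vert\;=\;\vert V_k\vert\;\le\;\lfloor n/3\rfloor,
\]
which is the desired bound.

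There is no real obstacle here: all the work has already been done in Lemmas~\ref{nointernalbad} and~\ref{nobipartitebad} (which established that every $3$-edge of $G$ is either a subgraph edge of $T_{V_1,V_2,V_3}$ or tripartite bad), together with Lemma~\ref{Talmostbalanced}(iii) (which guarantees each $V_i$ is large). The corollary is essentially a one-line consequence of these facts plus the pigeonhole observation that the smallest part has size at most $\lfloor n/3\rfloor$.
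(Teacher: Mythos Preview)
Your proof is correct and follows essentially the same route as the paper: both arguments use Lemmas~\ref{nointernalbad} and~\ref{nobipartitebad} to see that for $x,y\in V_i$ the joint neighbourhood lies in $V_{i+1}$, and then apply pigeonhole to find a part of size at most $\lfloor n/3\rfloor$. The paper simply writes ``without loss of generality $V_1$ is the smallest part'' and picks $x,y\in V_3$, whereas you track the indexing explicitly via $i=k-1$; the content is identical.
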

\begin{proof}
Suppose without loss of generality that $V_1$ is the smallest of the three parts
$V_1$, $V_2$ and $V_3$. Then $\vert V_1 \vert \leq \lfloor n/3 \rfloor$. Now
consider a pair of vertices $x,y \in V_3$. By Lemmas~\ref{nointernalbad}
and~\ref{nobipartitebad}, there is no bad edge of $G$ containing both $x$ and
$y$. In particular the codegree of $x$ and $y$ in $G$ is at most the codegree of
$x$ and $y$ in $T$, which is exactly $\vert V_1 \vert $.
\end{proof}

\subsection{Divisibility and tripartite matchings}

By Corollary~\ref{upperboundcoex}, we know that for $n$ large enough
$\mathrm{coex}(n, F_{3,2})\leq \lfloor n/3 \rfloor$.
Construction~\ref{orcyclebip} from the Introduction shows that for all $n$ we
have $\mathrm{coex}(n, F_{3,2})\geq \lfloor n/3 \rfloor -1$. Continuing on the
work in the previous section (and re-using the previous section's notation), we
now determine for $n$ large enough which of the two possible values is the
actual codegree threshold. In addition, we seek to describe the set of
extremal examples. As this set depends on some divisibility conditions ---
specifically, on the congruence class of $n$ modulo $3$ --- we separate out into
three cases.

Before we do so, however, let us introduce some useful terminology. Let
$V_1\sqcup V_2 \sqcup V_3$ be a tripartition of a vertex set $V$. A
\emph{tripartite} $3$-edge is a triple $x_1x_2x_3$ with $x_i \in V_i$ for
$i=1,2,3$. Let $F$ be a set of tripartite $3$-edges. A pair
of vertices is \emph{overused (by $F$)} if it is contained in 
at least two $3$-edges of $F$. Next, $F$ is a \emph{tripartite pair
matching}, or just a \emph{tripartite matching}, if every two elements
of $F$ intersect in at most one vertex (that is,
there are no overused pairs).

\begin{proposition}\label{matchingsaregood}
Let $V$ be a set of vertices with tripartition $V=V_1 \sqcup V_2 \sqcup V_3$.
Then for any tripartite pair matching $F$ the $3$-graph $G$ on $V$ obtained by
adding the $3$-edges in $F$ to $T_{V_1, V_2, V_3}$ is $F_{3,2}$-free. 
\end{proposition}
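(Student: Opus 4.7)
The strategy is to argue by contradiction: suppose some copy of $F_{3,2}$, on vertices $\{a,b,c,d,e\}$ with edges $abc, abd, abe, cde$, sits inside $G = T_{V_1,V_2,V_3} \cup F$. The plan is to show that the joint neighbourhood $\Gamma_G(a,b)$ of the distinguished pair is too rigidly structured to support the fourth edge $cde$.

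First I would compute $\Gamma_T(a,b)$ in the pure tripartite-cycle graph $T = T_{V_1,V_2,V_3}$. If $a,b \in V_i$, then the only $3$-edges of $T$ containing $\{a,b\}$ come from $D_{V_i,V_{i+1}}$, giving $\Gamma_T(a,b) = V_{i+1}$. If $a \in V_i$ and $b \in V_{i+1}$ (cyclically), then again only $D_{V_i,V_{i+1}}$ contributes, yielding $\Gamma_T(a,b) = V_i \setminus \{a\}$ (the `backward' case $a \in V_i$, $b \in V_{i-1}$ is symmetric by relabelling). Next I would observe the effect of adjoining $F$: since $F$ is tripartite, it contributes to $\Gamma_G(a,b)$ only when $a$ and $b$ lie in distinct parts; since $F$ is a pair matching, it contributes \emph{at most one} extra vertex, which must lie in the remaining part~$V_{i+2}$.

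The proof then concludes with a short case split. If $a,b$ lie in the same $V_i$, then $c,d,e$ are all forced into $V_{i+1}$; but no $3$-edge of $G$ lies entirely inside a single part (neither $T$ nor $F$ has such edges), so $cde$ cannot be an edge, contradiction. If instead $a \in V_i$, $b \in V_{i+1}$, then $\{c,d,e\} \subseteq (V_i \setminus \{a\}) \cup \{z\}$ for some (unique, if it exists) $z \in V_{i+2}$. Either all three of $c,d,e$ lie in $V_i$ (ruling out $cde$ as above), or exactly two lie in $V_i$ and one equals $z \in V_{i+2}$; but the edges of $T$ have the `two-in-$V_j$, one-in-$V_{j+1}$' structure (the singleton part is cyclically \emph{after} the doubleton part), so a $V_i V_i V_{i+2}$ triple is not a $T$-edge, while tripartite $F$-edges meet each part exactly once and so cannot have two vertices in $V_i$. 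In every case we contradict the existence of the edge $cde$.

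I expect this proof to be short and essentially obstacle-free; the only mild subtlety is bookkeeping the cyclic structure of $T$, so I would be careful to distinguish the `forward' neighbour $V_{i+1}$ of $V_i$ from the `backward' one $V_{i-1}$ when enumerating $T$-edges through a pair. Note that the pair-matching hypothesis on $F$ is used crucially: if we allowed two tripartite edges $abz_1, abz_2$ through the same pair $\{a,b\}$, then with $c \in V_i \setminus \{a\}$ we could have $c,z_1,z_2 \in \Gamma_G(a,b)$ with $cz_1z_2$ potentially being a $T$-edge of the form $V_{i+2}V_{i+2}V_i$, which would create an $F_{3,2}$.
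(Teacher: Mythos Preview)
Your proof is correct. Both you and the paper proceed by a short case analysis, but the organization differs: the paper fixes a tripartite edge $abc\in F$ and then, up to cyclic symmetry, enumerates the two possible part-distributions of a $5$-set containing it (namely $\{a,a',b,b',c\}$ and $\{a,a',a'',b,c\}$), lists all edges of $G$ inside each, and observes that no $F_{3,2}$ appears. You instead fix the stem pair $\{a,b\}$ of a putative $F_{3,2}$ and compute its joint neighbourhood in $G$, which ties the argument directly to the ``independent neighbourhoods'' interpretation of $F_{3,2}$-freeness and makes the role of the pair-matching hypothesis (at most one extra neighbour $z$ from $F$) completely transparent. The two arguments are of comparable length; yours is arguably more conceptual, while the paper's is a bare enumeration.
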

\begin{proof}
This is a simple check. We know that $T_{V_1, V_2, V_3}$ is $F_{3,2}$-free. By
symmetry of the construction, it is sufficient to check that for
every
$a,a', a''' \in V_1$, $b,b' \in V_2$ and $c \in V_3$, neither of the $5$-sets
$\{a,a',b,b',c\}$ and $\{a,a', a'', b, c\}$ induce a copy of $F_{3,2}$ in $G$.
Without loss of generality the $3$-edges contained in these two $5$-sets are
subsets of $\{aa'b, aa'b',bb'c, abc,a'b'c\}$ and $\{aa'b,aa''b, a'a''b, abc\}$
respectively, neither of which contains a copy of $F_{3,2}$.
\end{proof}

\subsubsection{The case $n$ congruent to $0$ modulo $3$}

When $n$ is congruent to $0$ modulo $3$ and sufficiently large, the upper-bound
in Corollary~\ref{upperboundcoex} is sharp, and moreover there is a simple
description of all extremal configurations.

Before we give this construction, let us recall a basic fact from graph theory.
A \emph{proper edge colouring} of a $2$-graph $G$ with $m$ colours is a map
$\phi$ which assigns to each edge $\{a,b\}\in E(G)$ a \emph{colour}
$\phi(a,b)\in [m]$, such that edges which meet at a vertex are assigned
different colours. It is trivial to check that if $G$ is the complete bipartite
$2$-graph $K_{m,m}=\left([2m],\{ij:\ i \in [m], j \in [2m]
\setminus[m]\}\right)$ then there exists a proper edge colouring of $G$ with $m$
colours. (Consider e.g.\ $\phi(i,j)= i+j \ (\textrm{mod}\ m)$.) Such
edge colourings are in bijective correspondence with Latin squares. 
We do not have an explicit description of all such structures; 
in fact, even the counting problem is difficult (see
e.g.~\cite{mckay+wanless:05}).

\begin{construction}[Family $\CT(3m)$]\label{co:CT}
Let $n=3m$. Take disjoint sets $A,B,C$, each of size $m$.
Assume, for convenience, that $C=[m]$. 
Let $\phi$ be an edge colouring of the complete bipartite $2$-graph
with parts $A$ and $B$ with $m$ colours. Take the $3$-graph $T_{A,B,C}$
and all triples $abc$ where $a\in A$, $b\in B$ and $\phi(ab)=c$.
\end{construction}

It follows from the definition of proper colourings that $F$ is a tripartite
pair matching on $A\sqcup B\sqcup C$. Thus every $H\in\CT(n)$ is
$F_{3,2}$-free
by
Proposition~\ref{matchingsaregood}. Furthermore, all vertex pairs in $H$
have codegree $m$. It follows from Corollary~\ref{upperboundcoex} that $H$ is
extremal for the
codegree problem for all $n$ sufficiently large.

\begin{corollary}\label{thresholdn0mod3}
For all $n$ divisible by $3$ and sufficiently large, $\mathrm{coex}(n,
F_{3,2})=n/3$.\qed
\end{corollary}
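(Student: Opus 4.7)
The plan is to sandwich $\mathrm{coex}(n, F_{3,2})$ between matching bounds of $n/3$. The upper bound is immediate from Corollary~\ref{upperboundcoex}: since $3 \mid n$, $\lfloor n/3\rfloor = n/3$, and so every $F_{3,2}$-free $3$-graph on $n$ vertices (with $n$ sufficiently large) has minimum codegree at most $n/3$.

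For the lower bound, I would exhibit an explicit member of the family $\CT(n)$ of Construction~\ref{co:CT}. Set $n = 3m$, fix disjoint sets $A, B, C$ of size $m$, and let $\phi$ be any proper edge colouring of the complete bipartite graph $K_{A,B}$ using $C$ as the palette of $m$ colours; such a colouring exists by the standard fact that $K_{m,m}$ is $m$-edge-colourable (e.g.\ $\phi(a_i,b_j)=i+j\pmod m$). Let $F=\{abc : a\in A,\, b\in B,\, \phi(ab)=c\}$. Two triples of $F$ cannot share a pair of the form $\{a,b\}$ (as $\phi(ab)$ is uniquely determined), nor of the form $\{a,c\}$ or $\{b,c\}$ (as $\phi$ is a proper edge colouring, so the colours of edges of $K_{A,B}$ at any fixed endpoint are all distinct). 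Hence $F$ is a tripartite pair matching, and Proposition~\ref{matchingsaregood} guarantees that the $3$-graph $H = T_{A,B,C} \cup F$ is $F_{3,2}$-free.

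A short case analysis gives $\delta_2(H)=m$. In $T_{A,B,C}$, codegrees of pairs inside a single part equal $m$ while codegrees of cross-part pairs equal $m-1$. Adding $F$ leaves same-part codegrees unchanged (the triples of $F$ are tripartite) and raises each cross-part codegree by exactly one: the pair $\{a,b\}\in A\times B$ gains precisely the triple $\{a,b,\phi(ab)\}$; the pair $\{a,c\}\in A\times C$ gains the unique triple $\{a,b,c\}$ with $\phi(ab)=c$, which is unique precisely because $\phi$ uses each colour exactly once at $a$; the case $\{b,c\}\in B\times C$ is symmetric. Hence every pair of $H$ has codegree exactly $m=n/3$, so $\mathrm{coex}(n,F_{3,2})\geq n/3$, completing the proof. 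No serious obstacle arises here; the work was already done in Corollary~\ref{upperboundcoex} and in verifying that $\CT(n)$ consists of genuine constructions, so the corollary reduces to a bookkeeping check.
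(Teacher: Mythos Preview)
Your proposal is correct and follows essentially the same approach as the paper: the upper bound comes from Corollary~\ref{upperboundcoex}, and the lower bound comes from exhibiting a member of $\CT(n)$, checking via Proposition~\ref{matchingsaregood} that it is $F_{3,2}$-free, and verifying that every pair has codegree exactly $m$. Your write-up simply spells out the codegree check in slightly more detail than the paper, which asserts it in one line before stating the corollary.
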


What is more, every extremal configuration belongs to $\CT(n)$.

\begin{theorem}\label{n=0mod3config} Let $n=3m$ be large. 
Let $G$ be an $F_{3,2}$-free $3$-graph such that $v(G)=n$ and
$\delta_2(G)=m$. Then $G\in\CT(n)$.
\end{theorem}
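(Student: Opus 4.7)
The plan is to bootstrap on the stability analysis of Section~\ref{stabilityanalysis}. Since the hypothesis $\delta_2(G) = m$ certainly satisfies $\delta_2(G) \geq n/3 + o(n)$, we may fix a tripartition $V(G) = V_1 \sqcup V_2 \sqcup V_3$ minimising $\vert E(G) \setminus E(T_{V_1,V_2,V_3}) \vert$, and Lemmas~\ref{Talmostbalanced}, \ref{nointernalbad}, and~\ref{nobipartitebad} yield $\vert V_i \vert = m + o(n)$ together with the fact that every bad edge is tripartite. The goal is to upgrade these near-equalities to equalities and then to identify the bad edges with a proper edge colouring of $K_{m,m}$.

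I would first show $\vert V_1 \vert = \vert V_2 \vert = \vert V_3 \vert = m$ and $T_{V_1,V_2,V_3} \subseteq G$. For a within-part pair $(a, a') \in V_i^{(2)}$, no bad edge contains both vertices, so $d_G(a, a') \leq d_T(a, a') = \vert V_{i+1} \vert$. The codegree assumption $d_G(a, a') \geq m$ therefore forces $\vert V_{i+1} \vert \geq m$ for each $i$; summing over $i$ gives $\vert V_i \vert = m$ for all $i$. Equality in $d_G(a, a') \leq \vert V_{i+1} \vert = m$ then forces $\{a, a', b\} \in E(G)$ for every $b \in V_{i+1}$, and since every edge of $T_{V_1,V_2,V_3}$ contains a within-part pair, we conclude $T_{V_1,V_2,V_3} \subseteq G$.

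Next, write $B = E(G) \setminus E(T_{V_1,V_2,V_3})$; by the above all edges of $B$ are tripartite. For a cross pair $(a, b) \in V_1 \times V_2$ we have $d_T(a, b) = m - 1$, and since $T_{V_1,V_2,V_3} \subseteq G$ we obtain $d_G(a, b) = (m-1) + \vert \{c \in V_3 : abc \in B\} \vert$, so the codegree condition forces at least one such $c$. The key step is to show \emph{at most} one such $c$ exists: if $abc, abc' \in B$ with $c \neq c'$, pick $x \in V_1 \setminus \{a\}$ (possible since $m \geq 2$); then $abx$ (a $V_1 V_1 V_2$-triple of $T_{V_1,V_2,V_3}$) and $cc'x$ (a $V_3 V_3 V_1$-triple of $T_{V_1,V_2,V_3}$) both lie in $G$, and together with $abc, abc' \in B$ they produce $ab \vert cc'x$, a forbidden copy of $F_{3,2}$. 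By the cyclic symmetry of the $T_{V_1,V_2,V_3}$ construction the analogous statement holds for cross pairs of types $V_2 \times V_3$ and $V_3 \times V_1$, so every cross pair lies in exactly one bad edge.

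The assignment $\phi : V_1 \times V_2 \to V_3$ sending $(a, b)$ to the unique $c \in V_3$ with $abc \in B$ is therefore well-defined, and the uniqueness statements on cross pairs of types $V_1 \times V_3$ and $V_2 \times V_3$ are precisely the conditions that $\phi$ is a proper edge colouring of the complete bipartite graph between $V_1$ and $V_2$ using the $m$ colours of $V_3$. Since $G$ is exactly $T_{V_1,V_2,V_3}$ together with the triples $\{a, b, \phi(a, b)\}$, we conclude $G \in \CT(n)$. The main substantive step is the $F_{3,2}$ embedding in the preceding paragraph; the remainder is bookkeeping that extracts the consequences of exact (rather than approximate) codegree from the stability framework.
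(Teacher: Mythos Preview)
Your proposal is correct and follows essentially the same approach as the paper's proof: both use the stability lemmas to reduce to a balanced tripartition with only tripartite bad edges, force $|V_i|=m$ and $T_{V_1,V_2,V_3}\subseteq G$ via codegree on within-part pairs, rule out overused pairs by the identical $ab\mid cc'x$ embedding, and then read off the proper edge colouring. The only cosmetic difference is that you invoke cyclic symmetry explicitly to cover all three cross-pair types, whereas the paper argues one case and then verifies the bijection properties of the maps $\psi_c$ directly; these are equivalent organisations of the same argument.
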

\begin{proof}
Let $V_1$, $V_2$ and $V_3$ be as in
Section~\ref{stabilityanalysis}. Consider any pair of
vertices from $V_1$. By Lemmas~\ref{nointernalbad} and~\ref{nobipartitebad},
their joint neighbourhood is a subset of $V_2$, so that by the codegree
condition we must have $\vert V_2\vert \geq m$. Similarly we have $\vert
V_3\vert$ and $\vert V_1 \vert$ both at least $m$, so that in fact we must have
$\vert V_i \vert =m$ for $i=1,2,3$. Furthermore, observe that all $3$-edges
taking two vertices $x,x'$ in $V_i$ and one in $V_{i+1}$ must be in $E(G)$
(otherwise the pair $x,x'$ would have codegree at most $m-1$). So there are no missing edges in $G$.

Write $F$ for the set of tripartite $3$-edges of $G$ associated with the
partition $V_1 \sqcup V_2 \sqcup V_3$. We claim that $F$ contains no overused
pair. Indeed suppose this was not the case. Without loss of generality we would
then have vertices $a \in V_1$, $b \in V_2$ and $c,c'$ in $V_3$ such that $abc$
and $abc'$ are both in $F$ and hence in $G$. Now let $a'$ be any vertex in $V_1
\setminus\{a\}$. By the observation in the previous paragraph, both of $cc'a'$
and $aa'b$ are in $E(G)$. But then we would have $ab \vert cc'a'$, a
contradiction.

Now let $b \in V_2$ and $c\in V_3$. We know that $\vert \Gamma (b,c)\vert \geq
m$, that $\Gamma(b,c)\subseteq V_1 \cup V_2\setminus\{b\}$
(Lemma~\ref{nobipartitebad}).
Thus there exists at least one vertex $a=\psi_c(b)\in V_1$ with $abc\in
E(G)$, and this vertex is unique (else $(b,c)$ would be an overused pair). What
is more if $b'$ is an element of $V_2$ distinct from $b$, then we cannot have
both of $ab'c$ and $abc$ being $3$-edges of $G$, for otherwise $F$
would have an overused pair $\{a,c\}$. Since there are $m$ distinct elements in
each of $V_1$ and $V_2$, it follows that for any $c \in V_3$, $\psi_c$ is a
bijection from $V_2$ to $V_1$. Finally observe that if $c$ and $c'$ are distinct
elements of $V_3$ then for any $b \in V_2$, $\psi_c(b)\neq \psi_{c'}(b)$, since
otherwise $\{b,\psi_c(b)\}$ would be an overused pair for $F$. In particular the
map $\phi$ assigning colour $c$ to the $2$-edge $(b, \psi_c(b))$ is an edge
colouring of the complete bipartite $2$-graph between $V_1$ and $V_2$ using $m$
colours.

The $3$-graph $G$ thus belongs to $\CT(n)$, as claimed.
\end{proof}

\subsubsection{The case $n$ congruent to $2$ modulo $3$}

When $n$ is congruent to $2$ modulo $3$ and sufficiently large, the upper bound
in Corollary~\ref{upperboundcoex} is again sharp. Extremal constructions are
very similar to the ones in the previous case. However, there are now
some $3$-edges in the extremal configuration which can be deleted without
lowering the minimal codegree, so that a proof of an analogue of
Theorem~\ref{n=0mod3config} becomes more delicate.

\begin{construction}[Family $\CT(3m+2)$] Pick any $H$ from the family
$\CT(3m+3)$ that was defined by Construction~\ref{co:CT} and remove one vertex
from $H$.\end{construction}

Clearly, any obtained $3$-graph is $F_{3,2}$-free and, as it is easy to check,
has minimum codegree $m$.

\begin{corollary}\label{thresholdn2mod3}
For all $n$ congruent to $2$ modulo $3$ and sufficiently large,
$\mathrm{coex}(n, F_{3,2})=\lfloor n/3\rfloor$.~\qed
\end{corollary}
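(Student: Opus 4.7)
The plan is to verify that the lower bound coming from Construction $\CT(3m+2)$ matches the upper bound already supplied by Corollary~\ref{upperboundcoex}. Writing $n=3m+2$, Corollary~\ref{upperboundcoex} directly gives $\mathrm{coex}(n, F_{3,2})\le \lfloor n/3\rfloor=m$ for all sufficiently large $n$, irrespective of the congruence class. So only the lower bound $\mathrm{coex}(n, F_{3,2})\ge m$ needs attention.

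To obtain the lower bound, fix any $H\in\CT(3m+3)$, pick an arbitrary vertex $v\in V(H)$, and let $G=H-v$ be the vertex-deleted subgraph. I would first argue that $G$ is $F_{3,2}$-free: $F_{3,2}$-freeness is hereditary under taking induced subgraphs, and $H\in\CT(3m+3)$ is $F_{3,2}$-free by the discussion following Construction~\ref{co:CT} (it is $T_{A,B,C}$ together with a tripartite pair matching, so Proposition~\ref{matchingsaregood} applies).

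Next I would verify the codegree condition. By the structure of $\CT(3m+3)$, every pair of vertices of $H$ has codegree exactly $m+1$ in $H$: the pair receives $m+1$ $3$-edges coming from the $T_{A,B,C}$ part together with the matching $F$ (this is the ``all pairs have codegree $m$'' calculation in the paragraph after Construction~\ref{co:CT}, where here the parts have size $m+1$). For any pair $\{x,y\}\subseteq V(G)=V(H)\setminus\{v\}$, the codegree in $G$ equals the codegree in $H$ minus the number of $3$-edges of the form $vxy$ in $H$, which is either $0$ or $1$. Hence $d_G(x,y)\in\{m,m+1\}$, and in particular $\delta_2(G)\ge m$. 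Combined with $v(G)=3m+2=n$, this yields $\mathrm{coex}(n, F_{3,2})\ge m$, matching the upper bound.

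There is no genuine obstacle: all the difficulty has been absorbed into Corollary~\ref{upperboundcoex} (which rests on the full stability analysis of Section~\ref{stabilityanalysis}) and into the construction $\CT(3m+3)$. The only thing to be a little careful about is confirming that $H\in\CT(3m+3)$ has all pairs at codegree exactly $m+1$, so that deleting a single vertex can lower each codegree by at most one and therefore cannot take us below $m$.
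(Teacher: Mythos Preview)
Your proposal is correct and matches the paper's approach exactly: the paper simply states that the vertex-deleted $3$-graph from $\CT(3m+3)$ is $F_{3,2}$-free and has minimum codegree $m$, then invokes Corollary~\ref{upperboundcoex} for the matching upper bound. You have merely spelled out the ``easy to check'' codegree verification that the paper leaves to the reader.
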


\begin{theorem}\label{n=2mod3config}
Let $n=3m+2$ be large. 
Let $G$ be an $F_{3,2}$-free $3$-graph with $v(G)=n$ and
$\delta_2(G)=m$. Then
$G$ is a subgraph of some $H\in\CT(n)$. 
\end{theorem}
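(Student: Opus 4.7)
The strategy is to adapt the argument of Theorem~\ref{n=0mod3config} and handle the divisibility obstacle by adjoining a phantom vertex. To begin, I would apply the structural results of Section~\ref{stabilityanalysis} to secure a tripartition $V(G) = V_1 \sqcup V_2 \sqcup V_3$ with each $V_i$ independent and no $V_iV_iV_{i-1}$-edges. Since a pair inside $V_i$ has codegree at most $|V_{i+1}|$, the assumption $\delta_2(G) = m$ forces $|V_i| \geq m$ for each $i$, and combined with $|V_1|+|V_2|+|V_3|=3m+2$ this leaves just two possible size profiles up to cyclic relabeling: $(m,m,m+2)$ and $(m,m+1,m+1)$.

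To rule out the first profile (say $|V_1|=m+2$, $|V_2|=|V_3|=m$), I would adapt the local $F_{3,2}$-arguments of Theorem~\ref{n=0mod3config} to show that no pair spanning two parts lies in more than one tripartite 3-edge. The tight codegree condition on $V_1\times V_3$ and $V_2\times V_3$ pairs, together with the completeness of the $V_2V_2V_3$-edges forced by $|V_3|=m$, then implies that every such pair lies in exactly one tripartite edge. This produces simultaneously $|V_1||V_3|=m(m+2)$ and $|V_2||V_3|=m^2$ tripartite edges, a contradiction since $m(m+2)>m^2$.

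Now assume $|V_1|=m$ and $|V_2|=|V_3|=m+1$. The tight codegree on $V_3$-pairs forces every $V_3V_3V_1$-edge into $E(G)$. I would then establish, via $F_{3,2}$-avoidance using this completeness, that every pair $(a,c)\in V_1\times V_3$ lies in at most one tripartite edge; a second $F_{3,2}$-argument together with the codegree requirement on $V_1\times V_2$-pairs forces every $(a,b)\in V_1\times V_2$ into exactly one tripartite edge, defines a function $\phi\colon V_1\times V_2\to V_3$, and forces every $V_1V_1V_2$-edge into $E(G)$. The map $\phi(a,\cdot)$ is then an injection between sets of equal size $m+1$, hence a bijection. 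For the harder injectivity of $\phi(\cdot,b)$, suppose $\phi(a,b)=\phi(a',b)=c$ for distinct $a,a'\in V_1$: applying $F_{3,2}$-avoidance to $bc\mid aa'z$ (using the completeness of $V_1V_1V_2$-edges) forces \emph{every} $V_2V_2V_3$-edge through $(b,c)$ to be missing; the codegree equality on $(b,c)$ then forces $\phi(a'',b)=c$ for every $a''\in V_1$, so by bijectivity of $\phi(a'',\cdot)$ no tripartite edge passes through $(b',c)$ for any $b'\in V_2\setminus\{b\}$; codegree on $(b',c)$ then forces $bb'c\in E(G)$, contradicting the missingness above.

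With $\phi$ now a proper edge coloring of $K_{V_1,V_2}$ with colors in $V_3$, a short case check confirms that the remaining missing $V_2V_2V_3$-edges can all be added to $G$ without creating $F_{3,2}$. I would then adjoin a phantom vertex $v^*$ to $V_1$ to form $A=V_1\cup\{v^*\}$ of size $m+1$ and extend $\phi$ to $A\times V_2$ by taking $\phi(v^*,b)$ to be the unique element of $V_3\setminus\phi(V_1,b)$. A direct count (each $c\in V_3$ is missed by exactly one $b\in V_2$) shows that $b\mapsto\phi(v^*,b)$ is a bijection, so the extended $\phi$ is a proper edge coloring of $K_{A,V_2}$ with colors in $V_3$. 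The corresponding 3-graph on $3m+3$ vertices lies in $\CT(3m+3)$, and removing $v^*$ yields a member of $\CT(3m+2)$ containing $G$. The main obstacle throughout is the injectivity of $\phi(\cdot,b)$, which does not fall out of the codegree bound directly and requires the bootstrapping argument above.
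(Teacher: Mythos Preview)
Your proposal is correct and follows essentially the same approach as the paper: the paper merely chooses the small part to be $V_3$ rather than $V_1$, proves the no-overused-pairs lemma (its Lemma~\ref{n2nooverused}) uniformly before determining the profile rather than after splitting by profile, and handles your ``hard injectivity'' case by reducing to an already-treated overused-pair case rather than by your bootstrapping argument. The ``short case check'' step is unnecessary (you only need $G\subseteq H$, and $H\in\CT(n)$ is $F_{3,2}$-free by construction), but harmless.
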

\begin{proof}
Let $V_1$, $V_2$, $V_3$ be as in
Section~\ref{stabilityanalysis}. Consider any pair of
vertices from $V_1$. By Lemmas~\ref{nointernalbad} and~\ref{nobipartitebad},
their joint neighbourhood is a subset of $V_2$, so that by the codegree
condition we must have $\vert V_2\vert \geq m$. Similarly we have $\vert
V_3\vert$ and $\vert V_1 \vert$ both at least $m$.

Without loss of generality, we may therefore assume that $\vert V_3 \vert =m$,
and $m \leq \vert V_i \vert \leq m+2$ for $i=1,2$. We know
(Lemmas~\ref{nointernalbad} and~\ref{nobipartitebad}) that for every $b,b' \in
V_2$ their joint neighbourhood is a subset of $V_3$. By the codegree condition
$\delta_2(G)=m$, it follows that \emph{all} $3$-edges taking two vertices in
$V_2$ and one vertex in $V_3$ must be in $E(G)$. We claim that in addition all
$3$-edges taking two vertices in $V_3$ and one in $V_1$ must be in $E(G)$:
\begin{lemma}\label{n2mod3nomissingbipartite}
For all $c,c' \in V_3$ and all $a \in V_1$, $acc' \in E(G)$.
\end{lemma}
\begin{proof}
Suppose for contradiction we had a triple $acc' \notin E(G)$ with $c,c' \in V_3$
and $a \in V_1$. Consider $\Gamma(a,c)$. We know from
Lemmas~\ref{nobipartitebad} that this is a subset of $V_3\cup
V_2\setminus\{c,c'\}$, and must have size at least $m$. Since $\vert
V_3\setminus\{c,c'\} \vert =m-2$, it follows that there must be at least two
vertices $b,b' \in \Gamma(a,c) \cap V_2$.

Now we know that for all $c'' \in V_3$, $bb'c'' \in E(G)$. In particular, for
all $c'' \in V_3\setminus\{c,c'\}$, the triple $acc''$ must also be missing from $E(G)$, since
otherwise we would have $ac \vert bb'c''$. Running through the argument again
with $c''$ instead of $c'$, it follows that $axy$ is missing for \emph{all}
possible choices of distinct $x,y\in V_3$. But then $a \in V_1$ has missing
degree $d_M(a)\geq \binom{m}{2}= \Omega(n^2)$, contradicting
Lemma~\ref{nobigmissingdegree}. Thus all triples taking two vertices in $V_3$
and one vertex in $V_1$ must be in $G$.
\end{proof}

Now let $F$ be the set of tripartite $3$-edges of $G$ associated with the
tripartition $V_1 \sqcup V_2 \sqcup V_3$.
\begin{lemma}\label{n2nooverused}
$F$ contains no overused pairs.
\end{lemma}
\begin{proof}
We consider each possible type of overused pairs in turn, and show they cannot
occur in $G$.
\begin{enumerate}[(i)]
\item Suppose first of all that we had an overused pair $ac$ with $a \in V_1$,
$c \in V_3$. Then there exist $b,b' \in V_2$ such that $abc$ and $ab'c$ are both
in $G$. But then let $c'$ be any element of $V_3 \setminus\{c\}$. We know that
both of $acc'$, $bb'c'$ are in $G$ (by Lemma~\ref{n2mod3nomissingbipartite} and
the preceding remark), so we have $ac \vert bb'c'$, a contradiction.

\item Now suppose that we had an overused pair $bc$ with $b \in V_2$, $c \in
V_3$. Then there exist $a,a' \in V_1$ with $abc, a'bc \in E(G)$. But we know
that for any $b' \in V_2\setminus\{b\}$ we have $bb'c \in E(G)$. In particular
we cannot have $aa'b' \in E(G)$ since otherwise $bc\vert aa'b'$. But we know
that $\Gamma(a,a')\subseteq V_2$ (Lemmas~\ref{nointernalbad} and~\ref{nobipartitebad}), so this would imply that $a,a'$ have codegree at most
$1$, contradicting our minimum codegree condition (provided $n\geq 8$).

\item Finally suppose that we had an overused pair $ab$ with $a \in V_1$ and $b
\in V_2$. Then there exist $c,c' \in V_3$ such that $abc, abc' \in E(G)$. For
any $a' \in V_1\setminus\{a\}$, we have $a'cc' \in E(G)$ (by
Lemma~\ref{n2mod3nomissingbipartite}). In particular we must have $aa'b \notin
E(G)$, since otherwise $ab \vert a'cc'$.

It then follows from our codegree assumption that $\Gamma(a,b)= V_3$. Also, for
all $a' \in V_1 \setminus\{a\}$, $\Gamma(a,a')\subseteq V_2\setminus\{b\}$. By
our codegree assumption again we deduce that $\vert V_2 \vert \geq m+1$, and
hence $\vert V_1 \vert \leq m+1$.

Now for all $a' \in V_1\setminus\{a\}$, we have $\Gamma(a',b)\subseteq \left(V_1
\setminus\{a,a'\}\right)\cup V_3$, so that by the codegree assumption again
there is at least one $c'' \in V_3$ such that $a'bc'' \in E(G)$. The pair $bc''$ is
then an overused pair (used by $a,a'$) taking one vertex in each of $V_2$ and $V_3$,
contradicting (ii). 
\end{enumerate}
\end{proof}

\begin{lemma}
$\vert V_1 \vert = \vert V_2 \vert =m+1$.
\end{lemma}
\begin{proof}
We already know that $m \leq \vert V_1 \vert$ and $\vert V_2 \vert \leq m+2$.
Suppose for contradiction that $\vert V_2\vert =m+2$ and thus $\vert V_1 \vert
=m$. For every $(a,b)\in V_1\times V_2$, we know $\Gamma(a,b)\subseteq
\left(V_1\setminus \{a\}\right) \cup V_3$. Since $\vert V_1
\setminus\{a\}\vert=m-1$, there must be at least one tripartite $3$-edge
containing the pair $(a,b)$. Thus there must be in total at least $\vert V_1
\vert \cdot \vert V_2 \vert = m(m+2)$ distinct tripartite $3$-edges. Averaging
over the $m^2$ pairs $(a,c)\in V_1 \times V_3$, we deduce that at least one such
pair must be contained in at least two tripartite $3$-edges, contradicting
Lemma~\ref{n2nooverused}.

By symmetry, it also cannot be the case that $\vert V_1 \vert =m+2$ and $\vert
V_2 \vert = \vert V_3 \vert =m$, and we are done.
\end{proof}

For every $a,c \in V_1 \times V_3$, we have
$\Gamma(a,c)\subseteq V_2 \cup \left(V_3\setminus\{c\}\right)$. Since
$\delta_2(G) = m$ and $\vert V_3 \vert =m$, it follows that there is at least
one $b\in V_2$ such that $abc \in E(G)$. Furthermore we know this $b$
is unique since the set of tripartite $3$-edges of $G$ contains no overused
pair. Define $\phi(a,c)=b$. 

Also, $\phi^{-1}(b)$ consists of vertex-disjoint pairs (again, as there
are no overused pairs). Thus $\phi$ corresponds to some proper
$(m+1)$-edge colouring of $V_1\times V_3$.
It is easy to see that any $(m+1)$-edge colouring of the
complete bipartite graph $K_{m+1,m}$ extends to that of $K_{m+1,m+1}$
(in fact, in the unique way).
We conclude that $G$ is a subgraph of some $3$-graph in $\CT(n+1)$
and thus of some
$H\in\CT(n)$. This finishes the proof of Theorem~\ref{n=2mod3config}.\end{proof}

\begin{remark}\label{rm:2mod3} Note that an extremal $G$ with $\vert V_3 \vert=\frac{n-2}{3}$ can have some edges of the form $aa'b$
with $a,a'\in V_1$ and $b\in V_2$ missing. Namely, if there
exist $c,c' \in V_3$ such that $abc$ and $a'bc'$ are both $3$-edges of $G$, then
we may delete $aa'b$ without lowering the codegree of $G$. On the other hand,
for
each pair $a,a'\in V_1$ we have at most one $b\in
V_2$ for which $aa'b$ is missing, and similarly for every pair $(a,b) \in V_1
\times V_2$ we have at most one $a'$ for which $aa'b$ is missing.
\end{remark}

\subsubsection{The case $n$ congruent to $1$ modulo $3$}

In this section, let $n=3m+1$ be congruent to $1$ modulo $3$ and sufficiently large. Unlike the two previous cases, the upper bound in Corollary~\ref{upperboundcoex} is not
sharp.

\begin{proposition}\label{thresholdn1mod3}
For all $n$ congruent to $1$ modulo $3$ and sufficiently large,
$\mathrm{coex}(n, F_{3,2})=\lfloor n/3\rfloor-1$.
\end{proposition}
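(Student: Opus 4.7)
The lower bound $\mathrm{coex}(n,F_{3,2})\geq\lfloor n/3\rfloor-1$ follows immediately from Construction~\ref{orcyclebip} with three parts of sizes as equal as possible. For the upper bound my plan is to assume for contradiction that $G$ is an $F_{3,2}$-free $3$-graph on $n=3m+1$ vertices (with $n$ large) satisfying $\delta_2(G)\geq m$, and derive a contradiction from a double count of the tripartite bad edges in $G$.

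First, I would feed $G$ through the structural analysis of Section~\ref{stabilityanalysis}, exactly as in the proofs of Theorems~\ref{n=0mod3config} and~\ref{n=2mod3config}, to obtain a tripartition $V(G)=V_1\sqcup V_2\sqcup V_3$ in which every bad edge is tripartite. Lemmas~\ref{nointernalbad} and~\ref{nobipartitebad} then force every pair inside $V_i$ to have its joint neighbourhood inside $V_{i+1}$ (indices taken in the cyclic order $1\to 2\to 3\to 1$), which combined with $\delta_2(G)\geq m$ gives $|V_i|\geq m$ for each $i$. Since the three sizes sum to $3m+1$, exactly one part has size $m+1$ and the others have size $m$; by the cyclic symmetry of $T_{V_1,V_2,V_3}$, I may assume without loss of generality $|V_1|=m+1$ and $|V_2|=|V_3|=m$.

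The core of the argument is to establish two structural claims: (i) $T_{V_1,V_2,V_3}\subseteq E(G)$, and (ii) the set $F:=E(G)\setminus E(T_{V_1,V_2,V_3})$ of tripartite edges is a tripartite pair matching. For (i), pairs in $V_1$ and pairs in $V_2$ have joint neighbourhoods forced to exhaust $V_2$ and $V_3$ respectively, so every triple of the form $aa'b$ and every triple $bb'c$ is present; for a hypothetical missing triple $acc'$ with $a\in V_1$ and $c,c'\in V_3$ I would recycle the proof of Lemma~\ref{n2mod3nomissingbipartite} — the cushion $|V_3\setminus\{c,c'\}|=m-2$ together with $|\Gamma(a,c)|\geq m$ yields two vertices $b,b'\in\Gamma(a,c)\cap V_2$, and iterating the standard $F_{3,2}$-avoidance argument (using that every $bb'c''$ is already present) forces $axy$ to be missing for every distinct $x,y\in V_3$, giving $d_M(a)=\Omega(n^2)$ in contradiction with Lemma~\ref{nobigmissingdegree}. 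For (ii), the three overused-pair cases of Lemma~\ref{n2nooverused} transcribe essentially verbatim, each time exhibiting an explicit $F_{3,2}$ by invoking one of the all-$T$-present statements from (i) together with a spare vertex from the relevant small part (which exists since $m\geq 2$ for $n$ large).

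The contradiction then comes from a double count of $|F|$. Since $\Gamma(a,c)\supseteq V_3\setminus\{c\}$ from $T$-edges alone contributes only $m-1$ vertices, every pair $(a,c)\in V_1\times V_3$ requires at least one tripartite edge to reach $\delta_2(G)\geq m$; by (ii) exactly one. Hence
\[|F|\geq |V_1|\cdot|V_3|=m(m+1).\]
On the other hand, (ii) applied to $V_2\times V_3$ pairs gives $|F|\leq|V_2|\cdot|V_3|=m^2$. Since $m(m+1)>m^2$ for every $m\geq 1$, this is the desired contradiction, and so $\mathrm{coex}(n,F_{3,2})\leq m-1=\lfloor n/3\rfloor-1$. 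The only place where I expect to need some care is verifying that the adapted proof of Lemma~\ref{n2mod3nomissingbipartite} has enough slack in the new size profile — fortunately the inequalities there remain loose enough that the original argument goes through essentially unchanged.
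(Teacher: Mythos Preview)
Your proposal is correct and rests on the same double count as the paper: once the parts have sizes $(m{+}1,m,m)$, each of the $m(m{+}1)$ pairs in $V_1\times V_3$ forces a tripartite edge, whereas the $m^2$ pairs between the two small parts can absorb at most $m^2$ of them. The paper is more economical, though: instead of first ruling out \emph{all} overused pairs (which is what drives you to adapt Lemma~\ref{n2mod3nomissingbipartite} for the $V_3V_3V_1$ edges), it simply takes the overused $V_2\times V_3$ pair $(b,c)$ produced by pigeonhole and directly exhibits $bc\mid aa'b'$ using only the edge families $V_1V_1V_2$ and $V_2V_2V_3$, both of which are forced trivially by $|V_2|=|V_3|=m$ --- so your claim~(i) for $V_3V_3V_1$ and two of the three cases in claim~(ii) are in fact unnecessary.
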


\begin{proof}
Let $n=3m+1$ be large, and $G$, $V_1$, $V_2$, $V_3$ be as in Section~\ref{stabilityanalysis}. Suppose for contradiction that $\delta_2(G)=m$. Consider any pair of vertices from $V_1$. By Lemmas~\ref{nointernalbad} and~\ref{nobipartitebad}, their joint neighbourhood is a subset of $V_2$, so that by the codegree condition we must have $\vert V_2\vert \geq m$. Similarly we have $\vert V_3\vert$ and $\vert V_1 \vert$ both at least $m$, so that in fact we must have two parts of size $m$ and one part of size $m+1$. Assume without loss of generality that $\vert V_3 \vert = m+1$, and that $\vert V_1\vert = \vert V_2 \vert=m$.

By the codegree condition, all edges with two vertices in $V_3$ and one in $V_1$
or two vertices in $V_1$ and one vertex in $V_2$ must be in $E(G)$. In addition,
for every pair $(b,c) \in V_2 \times V_3$, we know that $\Gamma(b,c)\subseteq V_1
\cup \left(V_2 \setminus \{b\}\right)$. Since $(b,c)$ has codegree 
at least $m$ and
$\vert V_2 \vert =m$, it follows that there exists at least one $a\in V_1$ such
that $abc \in E(G)$. Summing over all possible pairs $(b,c)$, we see that there
must be at least $m(m+1)$ tripartite $3$-edges in $G$. But there are only $m^2$
distinct pairs $(a,b) \in V_1 \times V_2$. Thus there is at least one such pair
appearing in at least two tripartite $3$-edges, 
i.e.\ there must be $a \in V_1$, $b \in V_2$, $c,c' \in V_3$ such that both
$abc$ and $abc'$ are in $E(G)$.

But then let $a'$ be any vertex in $V_1 \setminus\{a\}$. By our earlier observations, we know that $aa'b$ and $cc'a'$ are both $3$-edges of $G$, so that $ab \vert cc'a'$, contradicting the fact that $G$ is $F_{3,2}$-free.
\end{proof}

A consequence of this lower codegree threshold is that the extremal structures are considerably more complicated. We present three families $\CT_1(n)$, $\CT_2(n)$ and $\CT_3(n)$ of extremal 
3-graphs on $[n]$ and show
that for every extremal $G$ there is some $H\in\cup_{i=1}^3 \CT_i(n)$ 
containing $G$ as a (spanning) subgraph. One could say more about the possible structure of $E(H)\setminus E(G)$ (along the lines of
Remark~\ref{rm:2mod3}) but we do not think that this description will
be very illuminating. Let us define each family $\CT_i(n)$.

\begin{construction}[Family $\CT_1(3m+1)$]\label{co:H1}
Start with $T_{A,B,C}$ where $|A|=m$, $|B|=m+2$ and $|C|=m-1$. Add an arbitrary
set of tripartite edges so that no overused pairs are created and for
every $a\in A$ and $c\in C$ there is a tripartite edge containing $\{a,c\}$.
\end{construction}

\begin{construction}[Family $\CT_2(3m+1)$]\label{co:H2}
 Let $0\le k\le m+1$. Start with $T_{A,B,C}$ where
$|A|=|B|=m+1$ and $|C|=m-1$. Let $S$ consist of $k$ vertex-disjoint
pairs from $A\times B$. 

Remove all $3$-edges of $T_{A,B,C}$ that contain a pair from $S$.
Add all tripartite $3$-edges that contain a pair from $S$. Thus 
$S$ is precisely the set of overused pairs now. Add an
arbitrary collection of tripartite $3$-edges so that no new overused
pair is created and for every $a\in A$ and $c\in C$ there is at least
one tripartite edge containing $\{a,c\}$. (Note that if $a$ belongs
to a pair in $S$, then this condition is automatically satisfied.)
\end{construction}

\begin{construction}[Family $\CT_3(3m+1)$]\label{co:H3}
Start with $T_{V_1,V_2,V_3}$, where $\vert V_1 \vert =m+1$ and $\vert V_2 \vert
= \vert V_3 \vert =m$. 

Let $S$ consist of pairs of
vertices, containing at most one pair from $V_i\times V_{i+1}$ for each $i\in
[3]$ so that if $i\in \{1,3\}$  and $S$ contains both $(x,y)\in V_{i-1}\times
V_{i}$ and $(y',z)\in V_{i}\times V_{i+1}$, then $y=y'$. 
(Thus $0\le |S|\le 3$; for example, if $|S|=3$ then the pairs in $S$ form
either a 3-cycle or a path ending and starting in $V_2$.)

Remove all 3-edges from $T_{V_1,V_2,V_3}$ that contain a  pair in $S$.
Add an arbitrary collection of tripartite
3-edges so that 
 \begin{itemize}
 \item each pair of $S$ is contained in at least $m-1$ added edges;
 \item there are no overused pairs other than those from $S$;
 \item if $|V_i|=m$ (that is, $i\in \{2,3\}$) and $(x,y)\in V_i\times V_{i+1}$
is in $S$, then for every $x'\in V_i\setminus\{x\}$ the pair $\{x',y\}$ is
contained in exactly
one tripartite edge.
 \end{itemize} 
\end{construction}

We leave it to the reader to verify that each constructed $3$-graph
has minimum codegree $m-1$. The following result implies that all
these $3$-graphs are $F_{3,2}$-free.

\begin{proposition}\label{weaksuitedmatchingsaregood}
Let $V$ be a set of vertices with tripartition $V=V_1 \sqcup V_2 \sqcup V_3$. Let $G$ be obtained from $T_{V_1,V_2,V_3}$ by adding
some set $F$ of tripartite $3$-edges and removing all $3$-edges of $T_{V_1,V_2,V_3}$ that contain a pair overused by $F$. Then $G$ is $F_{3,2}$-free.
\end{proposition}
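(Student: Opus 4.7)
The plan is to proceed by contradiction. Suppose $G$ contains a copy of $F_{3,2}$, witnessed by a 5-set $\{a,b,c,d,e\}$ with $abc, abd, abe, cde$ all in $E(G)$. Every $3$-edge of $G$ is either a $T$-edge (of type $V_iV_iV_{i+1}$ for some $i$) or a tripartite edge from $F$. I would split into two cases, depending on the type of the ``bottom" edge $cde$.

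Case 1: $cde$ is tripartite, so WLOG $c\in V_1$, $d\in V_2$, $e\in V_3$. Each of $abc, abd, abe$ must be in $T\cup F$, which restricts the possible pair of parts containing $\{a,b\}$. For example, $abc$ forces $\{a,b\}$ to lie in $V_2\times V_3$, $V_3\times V_3$, or $V_1\times V_2$; one gets similar lists for $abd$ (from $d\in V_2$) and $abe$ (from $e\in V_3$). A short check shows these three lists have empty intersection, so there is no viable placement of $\{a,b\}$ and this case is impossible.

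Case 2: $cde\in T$, WLOG of type $V_1V_1V_2$ with $c,d\in V_1$ and $e\in V_2$. Going through the same constraints for $abc, abd, abe$, the only consistent placement is $\{a,b\}\in V_2\times V_3$, in which case $abc,abd$ are forced to be tripartite (hence both in $F$), while $abe$ is forced to be a $T$-edge of type $V_2V_2V_3$. But then $abc$ and $abd$ are two distinct tripartite edges of $F$ containing the pair $\{a,b\}$, so $\{a,b\}$ is overused by $F$. By the very definition of $G$, every $T$-edge containing an overused pair has been removed from $G$, so $abe\notin E(G)$, contradicting our assumption.

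The ``hard" part is really just keeping the bookkeeping honest: all the work is in the Case~1 exclusion, whereas Case~2 is where the construction's removal rule kicks in and does the real job. The key conceptual point is that adding a tripartite edge cannot create $F_{3,2}$ as the bottom edge (Case~1), and can only produce $F_{3,2}$ with a $T$-edge as the bottom through the overused-pair mechanism (Case~2), which is exactly what the $T$-edge removals neutralise.
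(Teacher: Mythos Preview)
Your argument is correct. The case split on the type of the bottom edge $cde$ works, and the bookkeeping in both cases checks out: in Case~1 the three constraint lists for $\{a,b\}$ really do have empty intersection, and in Case~2 the only surviving placement $\{a,b\}\in V_2\times V_3$ forces $abc,abd\in F$ to overuse $\{a,b\}$, whence the $T$-edge $abe$ has been removed.

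The paper takes a shorter route by invoking Proposition~\ref{matchingsaregood} up front. That proposition already shows that $T_{V_1,V_2,V_3}$ together with any tripartite \emph{matching} is $F_{3,2}$-free; hence any copy of $F_{3,2}$ in $G$ must use two tripartite edges of $F$ sharing a pair. Since the only pairs of edges in $F_{3,2}$ that share two vertices are among the three ``top'' edges, the shared (overused) pair is the top pair, and a one-line check then shows the third top edge is a removed $T$-edge. In effect, your Case~1 and the non-overused subcases of Case~2 are exactly what Proposition~\ref{matchingsaregood} handles, so the paper outsources that work; your proof is self-contained and does the same verification from scratch. Both arguments converge on the same contradiction, and neither is materially harder than the other.
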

\begin{proof}
 By
Proposition~\ref{matchingsaregood} we need only to check for copies
of $F_{3,2}$ that contain two tripartite edges sharing an overused pair, 
say $abc, ab'c \in F$ with $a \in V_1$, $c \in V_3$
and $b,b' \in V_2$. Each such $F_{3,2}$ has to be of form $ac
\vert bb'x$  for some $x$. Now, $bb'x \in E(G)$
implies $x \in V_3$. Since $(a,c)$ is an overused
pair, we have $acx \notin E(G)$ by the definition of $G$. Thus
we cannot have $ac \vert bb'x$, as desired.
\end{proof}

Examples of $3$-graphs in $\CT_1(n)$, $\CT_2(n)$ and $\CT_3(n)$ can be obtained by 
taking a $3$-graph in respectively  $\CT(n+5)$, $\CT(n+2)$ and $\CT(n+2)$, and
deleting
arbitrary vertices so that the parts have the desired sizes. However,
note that, for example, not all 3-graphs in $\CT_2(n)\cup \CT_3(n)$ 
with $S=\emptyset$ come from
$\CT(n+2)$ as there are $(m+1)$-edge colourings 
of $K_{m+1,m-1}$ (for $m\geq 4$) and $K_{m,m}$ (for $m \geq 2$) that do not extend to an 
$(m+1)$-edge colouring of $K_{m+1,m+1}$.

We shall show that the $3$-graphs in $\cup_{i=1}^3 \CT_i(n)$ contain (as
spanning subgraphs)
all possible extremal configurations of order $n$. We know
from our analysis in Section~\ref{stabilityanalysis} that every extremal
configuration $G$ for the codegree problem
consist of subgraph of $T_{V_1,V_2,V_3}$ together with a set of
tripartite $3$-edges. Thus the minimum codegree is at most $\min(\vert V_i
\vert:i \in [3])$. As 
$\delta_2(G)= m-1$, we must have $\vert V_i \vert \ge
m-1$ for every $i \in [3]$. We separate out into two cases according to whether
or not we have equality for some $i$.

\begin{theorem}\label{n=1mod3configt2}
Let $G$, $V_1$, $V_2$, $V_3$ be as in Section~\ref{stabilityanalysis}, and
suppose $n=3m+1$ is large and $\delta_2(G) =m-1$. If $\vert V_i \vert =m-1$ for
any $i=1,2,3$, then $G$ is isomorphic to a subgraph of some $H\in
\CT_1(n)\cup \CT_2(n)$.
\end{theorem}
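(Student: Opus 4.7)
The plan is to first pin down the possible part sizes, then in each remaining case verify the structure required by $\CT_1$ or $\CT_2$. Without loss of generality $\vert V_3\vert = m-1$, so $\vert V_1\vert + \vert V_2\vert = 2m+2$ with each of $\vert V_1\vert,\vert V_2\vert \ge m-1$ (since pairs inside $V_{i-1}$ have $T$-codegree $\vert V_i\vert$).

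\textbf{Step 1 (Part sizes).} Let $F$ denote the set of tripartite 3-edges of $G$. Every pair $(a,c) \in V_1 \times V_3$ has $T$-codegree $\vert V_3\vert - 1 = m-2$, so it must lie in at least one tripartite edge, giving $\vert F\vert \ge \vert V_1\vert(m-1)$. If no pair $(b,c) \in V_2 \times V_3$ is overused by $F$, then $\vert F\vert \le \vert V_2\vert(m-1)$, forcing $\vert V_1\vert \le \vert V_2\vert$. An overused pair $(b,c)\in V_2\times V_3$ with $abc,a'bc\in F$ creates, for every $b' \in V_2\setminus\{b\}$, an $F_{3,2}$ on $\{a,a',b,b',c\}$ via $bc\vert aa'b'$: the edge $bcb'$ lies in $E(G)$ because $V_2$-internal pairs have tight $T$-codegree $\vert V_3\vert = m-1$, and the edge $aa'b'$ cannot be missing for more than $\vert V_2\vert - (m-1)$ values of $b'$ without violating the codegree of $(a,a')$, whereas we would need it missing for $\vert V_2\vert - 1$ values. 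Hence $\vert V_1\vert \le \vert V_2\vert$. An analogous argument excludes $\vert V_1\vert = m-1$: the coverage constraint on $V_1\times V_2$ forces $\vert F\vert \ge (m-1)(m+3)$, and no pair in $V_1\times V_3$ can be overused (such a pair would create $F_{3,2}$ via $ac\vert bb'c'$, since $acc'$ and $bb'c'$ are both forced into $G$ by the tight codegrees of $V_3$-internal and $V_2$-internal pairs), giving $\vert F\vert \le (m-1)^2 < (m-1)(m+3)$, a contradiction. So $(\vert V_1\vert,\vert V_2\vert,\vert V_3\vert) \in \{(m,m+2,m-1),(m+1,m+1,m-1)\}$.

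\textbf{Step 2 (Matching $\CT_1$ or $\CT_2$).} In both remaining cases, pairs inside $V_2$ are tight, so every $V_2V_2V_3$ edge lies in $G$, and every $(a,c) \in V_1\times V_3$ is covered by a tripartite edge. For the case $(m,m+2,m-1)$ I check — along the lines of Lemma~\ref{n2nooverused} — that no tripartite edge of $G$ creates an overused pair: each hypothetical overused pair in $V_1\times V_2$, $V_2\times V_3$, or $V_1\times V_3$ propagates to forced missing $T$-edges whose total exceeds the miss budget allowed by the codegree constraint on the relevant within-part or cross-part pair. With no overused pairs and every $V_1\times V_3$ pair covered, $H := T_{V_1,V_2,V_3} \cup F$ lies in $\CT_1(n)$ and contains $G$. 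For the case $(m+1,m+1,m-1)$ the same method rules out overused pairs in $V_2\times V_3$ and in $V_1\times V_3$ (the latter via a counting argument showing that having any $(a,c)$ overused forces $\mu_{13}(a,c') \ge 2$ for all $c' \in V_3$ and an overall tripartite edge count that exceeds the bound $\vert V_2\vert(m-1)$ given by no overused in $V_2\times V_3$). Overused pairs in $V_1\times V_2$ are allowed and form the set $S$ of the $\CT_2$ construction. A further $F_{3,2}$-argument shows that $S$ is vertex-disjoint: if $(a,b_1),(a,b_2)$ were both overused, the miss-budget inequality forces $\mu_{12}(a,b_i) \ge m-3$, so the $V_3$-supports of their tripartite edges — two subsets of $V_3$ of size $\ge m-3$ in a set of size $m-1$ — must overlap, producing an overused pair in $V_1\times V_3$, a contradiction. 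Finally a short codegree calculation pins $\mu_{12}(a,b) = m-1$ and all $V_1V_1V_2$ edges through $(a,b) \in S$ to be missing in $G$, matching $\CT_2(n)$ exactly.

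The main obstacle is the detailed case analysis in Step~2, in which each type of overused pair is ruled out by tracing how the $F_{3,2}$-forbidding propagates through forced missing $T$-edges and then comparing the resulting demand against the codegree-based miss budgets. The arguments are routine extensions of those in the proof of Theorem~\ref{n=2mod3config}, but the bookkeeping is delicate because the miss budgets depend sensitively on which part the relevant vertices lie in.
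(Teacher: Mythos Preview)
Your outline matches the paper's approach: assume $|V_3|=m-1$, rule out overused pairs coming from $V_2\times V_3$ and $V_1\times V_3$, bound the part sizes by counting tripartite edges, and then identify $G$ with a subgraph of some $H\in\CT_1(n)\cup\CT_2(n)$. The paper proves the structural lemmas first and deduces the size bounds afterwards, but your reordering is harmless.

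There is, however, a genuine gap in Step~2. Your stated mechanism --- ``each hypothetical overused pair \dots\ propagates to forced missing $T$-edges whose total exceeds the miss budget allowed by the codegree constraint on the relevant within-part or cross-part pair'' --- does not work for an overused pair $(a,c)\in V_1\times V_3$ when $|V_1|=m$. Such a pair forces $acc'\notin E(G)$ for every $c'\in V_3\setminus\{c\}$, but for each individual pair $(c',c'')$ one has $\Gamma(c',c'')\subseteq V_1$ with $|V_1|=m$, so a single missing vertex is permitted and no local codegree constraint is violated. What the paper actually does (Lemma~\ref{v3v3v1allin}) is propagate further to show that \emph{all} triples $ac'c''$ with $c',c''\in V_3$ are missing, whence $d_M(a)\ge\binom{m-1}{2}$, contradicting the global bound $d_M(x)=o(n^2)$ of Lemma~\ref{nobigmissingdegree}. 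That is not a codegree miss-budget argument on any single pair. The same Lemma~\ref{v3v3v1allin} is also what drives the exclusion of overused $V_1\times V_2$ pairs in the $(m,m+2,m-1)$ case (via Lemma~\ref{case1propoverusedv1v2}); your sketch does not indicate how you would substitute for it.

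Incidentally, your counting trick from the $(m{+}1,m{+}1,m{-}1)$ case can be pushed to cover this: once you observe that $\Gamma(a,c')\subseteq V_2$ for every $c'\in V_3$, you get $\sum_{c'}\mu_{13}(a,c')\ge (m-1)^2$, and combined with $\mu_{13}(a'',c')\ge 1$ for $a''\neq a$ this gives $|F|\ge 2(m-1)^2>(m+2)(m-1)$ for large $m$, contradicting the cap from no overused $V_2\times V_3$ pair. But you do not say this, and the ``miss budget on a pair'' mechanism you describe does not capture it.
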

\begin{proof}
Without loss of
generality, assume that $|V_3|=m-1$. By Lemmas~\ref{nointernalbad}
and~\ref{nobipartitebad}, we have that $\Gamma(x,x')\subseteq
V_3$ for every $x,x' \in V_{2}$. The codegree condition $\delta_2(G)\geq m-1$
then implies that all
$3$-edges taking two vertices in $V_{2}$ and one in $V_3$ are in $G$. In
addition, we have:

\begin{lemma}\label{v3v3v1allin}
All $3$-edges taking two vertices in $V_{3}$ and one in $V_{1}$ are in $G$.
\end{lemma}
\begin{proof}
Indeed, suppose that $acc'\notin E(G)$ for some $c,c' \in V_3$ and $a\in V_1$. 
Since $\Gamma(c,a)$ contains at least $m-1$ vertices and is contained in
$V_2\cup V_3\setminus\{c,c'\}$ and since $V_3 \setminus\{c,c'\}$ has size
$m-3$, 
%(by Lemmas~\ref{nointernalbad} and~\ref{nobipartitebad}), 
it follows that there exist $b,b'$ such that $abc$ and $ab'c$ are both in
$E(G)$. But then for all $x\in V_3\setminus\{c\}$, the $3$-edge $acx$ cannot be in $G$,
for otherwise $ac\vert bb'x$. Likewise, for every $y \in V_3\setminus\{x\}$
we have that $axy$ is missing from $G$. This implies $d_M(a)\geq
\binom{m-1}{2}=\Omega(n^2)$, contradicting Lemma~\ref{nobigmissingdegree}.
\end{proof}

With Lemma~\ref{v3v3v1allin} in hand, we can now turn our attention to the tripartite $3$-edges of $G$. Write $F$ for the tripartite $3$-edges associated with the tripartition $V_1 \sqcup V_2 \sqcup V_3$.
\begin{corollary}\label{case1overusedv1v3}
$V_1 \times V_3$ contains no overused pair.
\end{corollary}
\begin{proof}
Suppose we had $a \in V_1$, $b,b' \in V_2$ and $c \in V_3$ with $abc, ab'c \in F$. Then for all $c' \in V_3\setminus\{c\}$ we must have $acc'$ missing from $G$ to prevent $ac\vert bb'c'$, contradicting Lemma~\ref{v3v3v1allin} (recall that $bb'c \in E(G)$, as observed just before Lemma~\ref{v3v3v1allin}). 
\end{proof}

Next we show that $V_2 \times V_3$ does not contain overused pairs either.
\begin{lemma}\label{case1overusedv2v3}
	$V_2 \times V_3$ contains no overused pairs 
\end{lemma}
\begin{proof}
	Suppose we had $a,a' \in V_1$, $b \in V_2$ and $c \in V_3$ such that $abc$ and $a'bc$ are both in $F$. We know that $\Gamma(a,a') \subseteq V_2$ (by Lemmas~\ref{nointernalbad} and~\ref{nobipartitebad}), so provided $n$ is sufficiently large (which we are assuming) there is at least one $b' \in V_2 \setminus\{b\}$ such that $aa'b' \in E(G)$. But since we also have $bb'c \in E(G)$ (as observed just before Lemma~\ref{v3v3v1allin}), this means $bc\vert aa'b'$, a contradiction.
\end{proof}

In particular, all overused pairs from $F$ come from $V_1 \times V_2$. 
\begin{lemma}\label{case1propoverusedv1v2}
	Let $(a,b)\in V_1 \times V_2$ be an overused pair from $F$. Then the following hold:
	\begin{enumerate}[(i)]
		\item $\Gamma(a,b) =V_3$;
		\item $\{f\in F: \ a \in f\}=\{f \in F: \ b \in f\}$.
	\end{enumerate}
\end{lemma}
\begin{proof}
	Let $(a,b)\in V_1\times V_2$ be such an overused pair. Then there exist	$c,c' \in V_3$ such that $abc$ and $abc'$ are $3$-edges of $G$.

	By Lemma~\ref{nobipartitebad}, we know $\Gamma(a,b)\subseteq V_1\cup V_3$. Suppose $aa'b\in E(G)$ for some $a' \in V_1$. By Lemma~\ref{v3v3v1allin}, we know $a'cc'\in E(G)$, so that $ab\vert a'cc'$, a contradiction. Thus $\Gamma(a,b)\subseteq V_3$, and the codegree condition $d(a,b)\geq m-1=\vert V_3\vert$ tells us $\Gamma(a,b)=V_3$, proving Part (i) of the lemma.

	Part (ii) is then immediate from Corollary~\ref{case1overusedv1v3} and Lemma~\ref{case1overusedv2v3}: if $ab'c''\in E(G)$ for some $b' \in V_2\setminus\{b\}$ and $c''\in V_3$, then $(a,c'')$ is an overused pair (used by $b$ and $b'$) from $V_1 \times V_3$, contradicting Corollary~\ref{case1overusedv1v3}; similarly if $a'bc''\in E(G)$ for some $a'\in V_1\setminus\{a\}$ and $c'' \in V_3$, then $(b,c'')$ is an overused pair (used by $a$ and $a'$) from $V_2\times V_3$, contradicting Lemma~\ref{case1overusedv2v3}.\qedhere
		
\end{proof}
Note Lemma~\ref{case1propoverusedv1v2} implies that the overused pairs from $F$ are vertex-disjoint pairs from $V_1\times V_2$.

For every pair $(a,c) \in V_1 \times V_3$, the joint neighbourhood $\Gamma(a,c)$
is a subset of $V_2 \cup \left(V_3 \setminus\{c\}\right)$. 
%(by Lemma~\ref{nobipartitebad}). 
By the codegree condition $\delta_2(G) \geq m-1$ and the fact that $\vert V_3 \vert =m-1$, it follows that for every such pair there is at least one tripartite $3$-edge $abc\in F$ with $b \in V_2$. Now there are exactly $(m-1)\vert V_1 \vert$ distinct such pairs $(a,c) \in V_1 \times V_3$. On the other hand, since there are no overused $V_2 \times V_3$ pairs arising from $F$, there can be at most $(m-1)\vert V_2 \vert$ such tripartite $3$-edges, one for each pair $(b,c)\in V_2 \times V_3$. 
Thus $\vert V_2 \vert \geq \vert V_1 \vert$.

If $\vert V_2 \vert = \vert V_1 \vert =m+1$, then by adding all missing $V_1V_1V_2$
$3$-edges to $G$ we obtain a member of $\CT_2(n)$, as
desired.

So let us suppose that $\vert V_1 \vert \leq m$. We know from our
codegree condition that $\vert V_1 \vert \geq m-1$, and the inequality $\vert
V_1 \vert \leq m$ implies $\vert V_2 \vert \geq m+2$.

We claim that $F$ contains no overused pair. Indeed, suppose $(a,b)\in V_1 \times V_2$ is an overused pair. By Lemma~\ref{case1propoverusedv1v2} Part (i), $aa'b\notin E(G)$ for all $a' \in V_1\setminus\{a\}$. For each $a' \in V_1\setminus\{a\}$, the codegree condition then tells us that $\Gamma(a',b)$ is a subset of $\left(V_1 \setminus\{a,a'\}\right)\cup V_3$ of
size at least $m-1$. In particular there must exist $c \in V_3$ with $a'bc \in
E(G)$. But this is a tripartite $3$-edge containing $b$ and not $a$, contradicting Part (ii) of Lemma~\ref{case1propoverusedv1v2}. Thus $F$ has no overused pair, as claimed.

Next, suppose that $\vert V_1 \vert =m-1$. Then for every
$(a,b) \in V_1 \times V_2$, $\Gamma(a,b)\subseteq \left(V_1
\setminus\{a\}\right) \cup V_3$. By the codegree assumption $\delta_2(G) \geq
m-1$, we deduce that there must be at least one tripartite $3$-edge involving
the pair $(a,b)$. Thus there must be at least $\vert V_1 \vert \cdot \vert V_2
\vert > \vert V_1 \vert \cdot \vert V_3 \vert$ tripartite $3$-edges in $G$,
implying the existence of an overused pair in $V_1 \times V_3$,
contradicting Corollary~\ref{case1overusedv1v3}. Thus $\vert V_1 \vert =m$, and hence $\vert V_2\vert=m+2$.

As observed after Lemma~\ref{case1propoverusedv1v2} above, every pair $(a,c)\in V_1\times V_3$ is covered 
by at least one tripartite $3$-edge (otherwise its codegree is at most
$\vert V_3\vert-1<m-1$); we have already shown that there are no overused pairs in $F$. By adding all missing $3$-edges of the form $V_1V_1V_2$ to
$G$ we thus obtain a member of $\CT_1(n)$, as
required.\end{proof}

\begin{theorem}\label{n=1mod3configt1}
Let $G$, $V_1$, $V_2$, $V_3$ be as in Section~\ref{stabilityanalysis}, and
suppose $n=3m+1$ is large and $\delta_2(G) =m-1$. If $\vert V_i \vert \geq m $
for all $i\in [3]$, 
then $G$ is a subgraph of some $H\in \CT_3(n)$.
\end{theorem}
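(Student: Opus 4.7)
Since $|V_i|\ge m$ for each $i$ and $\sum_i |V_i|=3m+1$, I may assume $|V_1|=m+1$ and $|V_2|=|V_3|=m$. By Lemmas~\ref{nointernalbad} and~\ref{nobipartitebad} every bad edge of $G$ is tripartite; let $F=E(G)\setminus E(T_{V_1,V_2,V_3})$ denote the set of tripartite edges and $S$ the set of pairs overused by $F$. The fact that $\Gamma(x,x')$ is confined to a single fixed part for every same-part pair, combined with $\delta_2(G)\ge m-1$ and the sizes of the parts, immediately yields the following local missing-edge bounds: for $\{a,a'\}\in V_1^{(2)}$ at most one $b\in V_2$ has $aa'b\notin E(G)$; for $\{b,b'\}\in V_2^{(2)}$ at most one $c\in V_3$ has $bb'c\notin E(G)$; and for $\{c,c'\}\in V_3^{(2)}$ at most two $a\in V_1$ have $acc'\notin E(G)$. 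For cross pairs the codegree condition translates into similar inequalities relating tripartite edges to missing $T$-edges.

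The heart of the proof is to show that $S$ fits the description of Construction~\ref{co:H3}, via four claims: \textbf{(I)} $|S\cap (V_i\times V_{i+1})|\le 1$ for each $i\in[3]$; \textbf{(II)} if $(x,y)\in V_{i-1}\times V_i$ and $(y',z)\in V_i\times V_{i+1}$ both belong to $S$ with $i\in\{1,3\}$, then $y=y'$; \textbf{(III)} for every $(x,y)\in S$, no $T$-edge through $(x,y)$ belongs to $E(G)$, and $(x,y)$ lies in at least $m-1$ tripartite edges of $G$; and \textbf{(IV)} if $(x,y)\in V_i\times V_{i+1}$ is in $S$ with $|V_i|=m$, then for every $x'\in V_i\setminus\{x\}$ the pair $\{x',y\}$ lies in at most one tripartite edge of $G$. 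In each case the argument is driven by $F_{3,2}$-blocking: given the hypothetical forbidden configuration, I produce a family of potential copies of $F_{3,2}$ whose avoidance forces so many $T$-edges missing that one of the local bounds above is violated (or else a direct $F_{3,2}$-copy arises).

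For illustration, consider Claim~(I) with two overused pairs $(a,b),(a,b'')\in V_1\times V_2$ sharing $a$, with witnesses $abc,abc'\in F$ and $ab''d,ab''d'\in F$. For each $a''\in V_1\setminus\{a\}$ the pattern $ab\,\vert\,a''cc'$ forces $aa''b\notin E(G)$ or $a''cc'\notin E(G)$; the ``at most two missing'' bound for $\{c,c'\}$ leaves at least $m-2$ choices of $a''$ with $aa''b$ missing, and the symmetric argument via $(a,b'')$ leaves at least $m-2$ choices with $aa''b''$ missing. By inclusion--exclusion in $V_1\setminus\{a\}$ the two sets intersect in at least $m-4$ vertices; any $a''$ in the intersection makes $\{a,a''\}$ have two distinct missing $V_2$-neighbours ($b$ and $b''$), contradicting the uniqueness bound when $m$ is large. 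All other subcases of~(I), the proof of~(II), and the proof of~(IV) follow the same pigeonhole template. For Claim~(III), I prove the stronger statement that if $(a,b)\in S\cap (V_1\times V_2)$ then $|\Gamma(a,b)\cap V_1|=0$: if $a'\in\Gamma(a,b)\cap V_1$, then for every pair $\{c_i,c_j\}$ of tripartite witnesses of $(a,b)$ the edge $a'c_ic_j$ must be missing, so $d_M(a')\ge\binom{f_{ab}}{2}$; combining this with Lemma~\ref{nobigmissingdegree} and the codegree requirement $f_{ab}+|\Gamma(a,b)\cap V_1|\ge m-1$ forces $|\Gamma(a,b)\cap V_1|=0$ and hence $f_{ab}\ge m-1$. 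The same argument handles $(b,c)\in V_2\times V_3$ and $(c,a)\in V_3\times V_1$ in $S$.

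Once the structure of $S$, the vanishing of $T$-edges through $S$-pairs, and the compensation count are established, I finish by adding to $G$ every missing $T$-edge not through any $S$-pair, and (if Claim~(IV) leaves some $\{x',y\}$ in zero tripartite edges) choosing at most one tripartite edge through each such $\{x',y\}$ in a way that avoids creating new overused pairs outside $S$; the structural rigidity established in~(I) and~(II) ensures that such a choice exists. By Proposition~\ref{weaksuitedmatchingsaregood} the resulting $3$-graph $H$ is $F_{3,2}$-free and fits the template of Construction~\ref{co:H3}, so $H\in \CT_3(n)$, and $G\subseteq H$ by construction. The main obstacle of the proof is the bookkeeping in Claims~(I) and~(II): the interplay between three different local missing-edge bounds, the six cyclic orderings of the parts, and the asymmetry between $V_1$ (of size $m+1$) and $V_2,V_3$ (of size $m$) gives rise to numerous subcases, each of which must be reduced to one of the standard pigeonhole contradictions.
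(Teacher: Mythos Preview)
Your overall architecture matches the paper's: establish that the set $S$ of overused pairs has the shape required by Construction~\ref{co:H3}, check the remaining local conditions, and then pad $G$ up to some $H\in\CT_3(n)$. But there is a real gap in your argument for Claim~(III), and it propagates.

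Your proof of (III) says: if $a'\in\Gamma(a,b)\cap V_1$ then $d_M(a')\ge\binom{f_{ab}}{2}$; combining this with $d_M(a')=o(n^2)$ and $f_{ab}+|\Gamma(a,b)\cap V_1|\ge m-1$ ``forces $|\Gamma(a,b)\cap V_1|=0$''. It does not. From the first two facts you only get $f_{ab}=o(n)$, and then the codegree inequality gives $|\Gamma(a,b)\cap V_1|\ge m-1-o(n)$, which is large, not zero. To close the loop you need an independent lower bound on $f_{ab}$ before invoking $d_M(a')\ge\binom{f_{ab}}{2}$. The paper obtains this by a different route (its Lemma ``overused in tripartite only''): fix two witnesses $z,z'\in V_{i-1}$ of the overused pair $(x,y)$, use $\Gamma(z,z')\subseteq V_i$ and the $F_{3,2}$-pattern $xy\,|\,x'zz'$ to kill $xx'y$ for at least $m-2$ choices of $x'$, and deduce directly that $|\Gamma(x,y)\cap V_{i-1}|\ge m-3$; only then does the missing-degree bound on any hypothetical $x'$ with $xx'y\in E(G)$ bite. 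Alternatively, you could have rescued your own line by noting that the single pair $\{c_1,c_2\}$ of witnesses already has every $a'\in\Gamma(a,b)\cap V_1$ as a missing $V_1$-neighbour, so your own local bound gives $|\Gamma(a,b)\cap V_1|\le 2$, hence $f_{ab}\ge m-3$; but you did not invoke that step.

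This matters beyond (III). In the paper's proof, (III) comes \emph{first} and is used repeatedly inside (I) and (II). Your sample subcase of (I), two overused pairs $(a,b),(a,b'')$ sharing the $V_1$-coordinate, is indeed handled by your pigeonhole template; but the subcase of two overused pairs sharing the $V_{i+1}$-coordinate (the paper's Lemma on $(a,b),(a',b)$) relies on knowing $\Gamma(a,b),\Gamma(a',b)\subseteq V_{i-1}$ with size $\ge m-1$, i.e.\ on (III). Likewise the disjoint case in $V_3\times V_1$ (the paper's Lemma~``$V_1\times V_3$ only one overused'') is not a one-step pigeonhole at all: it uses (III) to force $acc''\notin E(G)$ for every $c''$, then for each $c''$ manufactures a tripartite edge through $a$ and through $a'$, and finally plays these off against the already-established uniqueness in $V_1\times V_2$. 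Your blanket assertion that ``all other subcases follow the same pigeonhole template'' does not cover these. Finally, your endgame of \emph{adding} tripartite edges to meet the ``exactly one'' condition is unnecessary and unjustified: once (III) and (I) are in hand, the paper's short Lemma~\ref{lm:op1} shows that $G$ itself already has exactly one tripartite edge through each such $\{x',y\}$, so $H$ is obtained from $G$ by adding only $T$-edges (those not through any $S$-pair).
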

\begin{proof}
Assume without
loss of generality that $\vert V_1 \vert = m+1$ and $\vert V_2 \vert = \vert V_3
\vert = m$.

Let us show first that overused pairs are contained in tripartite
$3$-edges only.
\begin{lemma}\label{overused in tripartite only}
If $(x,y)$ is an overused pair in $V_i \times V_{i+1}$, then $\Gamma (x,y)\subseteq V_{i-1}$. 
\end{lemma}
\begin{proof}
Since $(x,y)$ is an overused pair, there exist $z,z'$ in $V_{i-1}$ such that $xyz, xyz'$ are $3$-edges of $G$. Now $\Gamma(z,z')\subseteq V_i$ (by Lemmas~\ref{nointernalbad} and~\ref{nobipartitebad}) so that by the codegree condition $\Gamma(z,z')$ contains at least $m-2$ elements of $\vert V_i \setminus\{x\}\vert$. For any such element $x'$, $xx'y\notin E(G)$ for otherwise we would have $xy \vert x'zz'$. Now the joint neighbourhood of $x$ and $y$ is contained in $V_i \cup V_{i-1}$ (Lemma~\ref{nobipartitebad}) and has size at least $m-1$, from which it follows that 
\begin{align*}
\vert \Gamma(x,y)\cap V_{i-1}\vert &\geq m-1- \left(\vert V_i\setminus\{x\}\vert - (m-2)\right)\\
&= 2m-3 - \vert V_i \setminus\{x\}\vert\\
&\geq m-3.
\end{align*}
Now suppose $xx'y \in E(G)$ for some $x' \in V_i$. Then for all $w,w' \in \Gamma(x,y)\cap V_{i-1}$ we would have $x'ww' \notin E(G)$, for otherwise $xy\vert x' ww'$. But then $d_M(x')\geq \binom{m-3}{2}=\Omega(n^2)$, contradicting Lemma~\ref{nobigmissingdegree}. Thus if $(x,y)$ is an overused pair from $V_i \times V_{i+1}$ then $\Gamma(x,y)\subseteq V_{i-1}$.
\end{proof}

We now turn our attention to showing that for each $i\in\{1,2,3\}$, the set $V_i \times V_{i+1}$ contains at most one overused pair.

\begin{lemma}\label{caset1vi+1is}
If $\vert V_{i+1}\vert =m$ and $(a,b)$, $(a',b')$ are overused pairs from $V_i \times V_{i+1}$, then $b=b'$.
\end{lemma}
\begin{proof}
Suppose not. We know by Lemma~\ref{overused in tripartite only} that for all $a'' \in V_i$, neither of $aa''b$ and $a'a''b'$ are $3$-edges of $G$.

If $a=a'$, then we have for any $a'' \in V_i \setminus\{a\}$ that
\[ \vert \Gamma(a,a'')\vert \leq \vert V_{i+1} \setminus \{b,b'\}\vert=m-2,\]
contradicting our codegree assumption $\delta_2(G)= m-1$. On the other hand, if
$a\neq a'$ then
\[\vert \Gamma(a,a')\vert \leq \vert V_{i+1} \setminus \{b,b'\}\vert=m-2,\]
contradicting again the codegree assumption.
\end{proof}
\begin{lemma}\label{vivi+1noaa'b}
Suppose $(a,b)$ and $(a',b)$ are overused pairs from $V_i \times V_{i+1}$. Then $a=a'$.
\end{lemma}
\begin{proof}
By Lemma~\ref{overused in tripartite only}, we know that $\Gamma(a,b)$ and $\Gamma(a',b)$ are both subsets of $V_{i-1}$ of size at least $m-1$. In particular since $\vert V_{i-1}\vert \leq m+1$, we have that
$\Gamma(a,b)\cap \Gamma(a',b)$ is a subset of $V_{i-1}$ of size at least $m-3$.

Now we know from Lemma~\ref{nobigmissingdegree} that $d_M(b)=o(n^2)=o(m^2)$. Thus for all but $o(m)$ vertices $b' \in V_{i+1} \setminus\{b\}$, we have that $bb'c \in E(G)$ for all but $o(m)$ vertices $c \in \Gamma(a,b)\cap \Gamma(a,b')$.

But for such $b'$ and $c$, $aa'b' \notin E(G)$, for otherwise we would have $bc \vert aa'b'$. Thus $\Gamma(a,a')$ (which we know is a subset of $V_{i+1}$) can contain at most $o(m)$ vertices, contradicting our codegree assumption for $n$ (and hence $m$) sufficiently large.
\end{proof}

Taken together, the last two lemmas imply the following:
\begin{corollary}\label{v1v2 and v2v3 at most one}
$V_1 \times V_2$ and $V_2 \times V_3$ each contain at most one overused
pair.\qed
\end{corollary}

We now prove analogues of Lemma~\ref{caset1vi+1is} for $V_3 \times V_1$, to show that it also contains at most one overused pair.

\begin{lemma}\label{v1v3noaa'c}
Suppose $(c,a)$ and $(c,a')$ are overused pairs from $V_3 \times V_1$. Then $a=a'$.
\end{lemma}
\begin{proof}
Suppose not. Then by Lemma~\ref{overused in tripartite only} we know that $\Gamma(a,c)$ and $\Gamma(a',c)$ are subsets of $V_2$ of size at least $\delta_2(G)=m-1$. We also know (Lemmas~\ref{nointernalbad} and~\ref{nobipartitebad}) that $\Gamma(a,a')$ is a subset of $V_2$ of size at least $\delta_2(G)=m-1$. Thus the intersection
\[ I=\Gamma(a,c)\cap \Gamma(a',c) \cap \Gamma(a,a')\]
has size at least $3(m-1)-2\vert V_2\vert= m-3$.

For every distinct $b,b'\in I$ we have that $bb'c\not\in E(G)$ because
otherwise we have $bc|aa'b'$. But then $d_M(c)\ge {|I|\choose 2}$,
contradicting Lemma~\ref{nobigmissingdegree}.\end{proof}

\begin{lemma}\label{v1v3 only one overused}
Suppose $(c,a)$ and $(c',a')$ are overused pairs from $V_3 \times V_1$. Then $a=a'$ and $c=c'$. (In particular, $V_1 \times V_3$ contains at most one overused pair.)
\end{lemma}
\begin{proof}
Suppose not. The only case left over from Lemmas~\ref{vivi+1noaa'b}
and~\ref{v1v3noaa'c} is the case when both $a\neq a'$ and $c\neq c'$, 
i.e.\ when we have vertex-disjoint overused pairs.

By Lemma~\ref{overused in tripartite only}, we know that $\Gamma(a,c)$ and $\Gamma(a',c')$ are both subsets of $V_2$. Now consider an arbitrary $c'' \in V_3 \setminus \{c,c'\}$. Since $acc'' \notin E(G)$ and $\vert V_3 \setminus\{c,c''\}\vert =m-2$, there must exist $b=b(c'')\in V_2$ such that $abc'' \in E(G)$. Similarly there must exist $b'=b'(c'') \in V_2$ such that $a'b'c'' \in E(G)$.

Now note that if $b\in \Gamma(a,c)$ then $(a,b)$ is overused (since both $abc$
and $abc''$ are in $G$). Similarly, if $b' \in \Gamma(a',c')$ then $(a',b')$ is
overused.

Also, $V_2$ has size $m$ while $\Gamma(a,c)$ and $\Gamma(a',c')$ both have size at least $m-1$. So there is at most one vertex $b_{\star} \in V_2 \setminus \Gamma(a,c)$ and at most one vertex $b'_{\star} \in V_2 \setminus \Gamma(a',c')$.

We now apply the pigeonhole principle to get a contradiction for $m$ large enough (at least $4$):
\begin{itemize}
\item if $b(c'')=b_{\star}$ for at least two distinct $c'' \in V_3\setminus\{c,c'\}$ then $(a,b_{\star})$ is as overused pair;
\item if $b(c'') \neq b_{\star}$ for at least one $c''\in V_3\setminus\{c,c'\}$ then $(a,b(c''))$ is an overused pair;
\item if $b'(c'')=b'_{\star}$ for at least two distinct $c'' \in V_3\setminus\{c,c'\}$ then $(a',b'_{\star})$ is an overused pair;
\item if $b'(c'') \neq b'_{\star}$ for at least one $c''\in V_3\setminus\{c,c'\}$ then $(a',b'(c''))$ is an overused pair.
\end{itemize}
Thus provided $\vert V_3 \setminus\{c,c'\}\vert \geq 2$, we have at least two distinct overused pairs from $V_1 \times V_2$, one involving $a$ and the other  $a'$. This contradicts Corollary~\ref{v1v2 and v2v3 at most one}.
\end{proof}

We have thus shown that for every $i \in [3]$, $V_i \times V_{i+1}$ contains at
most one overused pair. %Let $S$ consist of overused pairs.

\begin{lemma}\label{lm:op1} If $(x,y)\in V_i\times V_{i+1}$ is an overused
pair
and $|V_{i}|=m$, then for every $x'\in V_i\setminus\{x\}$ there is exactly one
$z\in V_{i-1}$ with $\{x',y,z\}\in E(G)$.
\end{lemma}
\begin{proof}
 The joint neighbourhood of $x',y$ lies inside $V_{i-1}\cup
V_i\setminus\{x,x'\}$. Since $\delta_2(G)\ge m-1$, there must exists at least
one $z$ as required. Since $\{x',y\}$ is not an overused pair, this $z$ is
unique.\end{proof}

\begin{lemma}\label{no ac-bc bad cnotequal}
Suppose $(a,c)$ and $(b', c')$ are overused pairs from $V_1 \times V_3$ and $V_2 \times V_3$ respectively. Then $c=c'$.
\end{lemma}
\begin{proof}
Suppose not. For $b''\in V_2\setminus\{b'\}$ let
$z(b'')$ be the vertex in $V_1$ with $\{b'',c',z(b'')\}\in E(G)$ given
by Lemma~\ref{lm:op1}.

If $a'=z(b_1'')=z(b_2'')$ for some distinct $b_1'', b_2'' \in V_2
\setminus\{b'\}$, then we have that $(a',c')$ is an overused pair from $V_1
\times V_3$ distinct from $(a,c)$ (since $c\neq c'$), contradicting 
Lemma~\ref{v1v3 only one overused}. Thus the map $z:V_2\setminus\{b_1\}\to V_1$
is injective. 

By Lemma~\ref{overused in tripartite only}, $\Gamma(b',c')$ is a
subset of $V_1$ of size at least $m-1$. As $n$ is large, $\Gamma(b',c')$ must
contain some $a'=z(b'')$. But then $a'c'b',a'c'b''\in E(G)$ so $a'c'$ is an
overused pair from $V_1 \times V_3$ distinct from $(a,c)$ (since $c\neq c'$),
again contradicting Lemma~\ref{v1v3 only one overused}.\end{proof}

Similarly, we have
\begin{lemma}\label{no ac-ab bad anotequal}
Suppose $(a,c)$ and $(a', b')$ are overused pairs from $V_1 \times V_3$ and $V_1 \times V_2$ respectively. Then $a=a'$.
\end{lemma}
\begin{proof}
Identical to the proof of Lemma~\ref{no ac-bc bad cnotequal}, with $V_i$ playing the role of $V_{i-1}$. 
\end{proof}

The above lemmas show that if we add all edges from $T_{V_1,V_2,V_3}$ to
$G$, we obtain an element of $\CT_3(n)$, as claimed.
\end{proof}

\section{Tur\'an density subject to a codegree constraint}
A natural variation of the Tur\'an density and codegree density problems is the following. 
\begin{definition}
Let $\mathcal{F}$ be a family of nonempty $3$-graphs, and let $(c_n)_{n \in \mathbb{N}}$ be a sequence of real numbers with $c_n \in [0,\frac{\mathrm{coex}(n, \mathcal{F})}{n-2}]$ for each $n\in \mathbb{N}$. The \emph{Tur\'an number of $\mathcal{F}$ subject to the codegree constraint $(c_n)_{n\in \mathbb{N}}$} is the function $\mathrm{ex}_{c_n}(\cdot, \mathcal{F})$ sending $n \in \mathbb{N}$ to the maximum number of $3$-edges in an $\mathcal{F}$-free $n$-vertex $3$-graph with minimum codegree at least $c_n(n-2)$.
\end{definition}

\begin{problem}\label{Turanwithcodeg}
Let $\mathcal{F}$ be a family of nonempty $3$-graphs, and let $c \in [0, \gamma(\mathcal{F}))$. Determine $\mathrm{ex}_{c}(n, \mathcal{F})$.
\end{problem}
To the best of our knowledge, Lo and Markstr\"om~\cite{LoMarkstrom12} were 
the first to pose a question of the kind considered in
Problem~\ref{Turanwithcodeg}. They asked for the behaviour of
$\textrm{ex}_{c}(n, \mathcal{F})$ when $\mathcal{F}$ is the $3$-graph
$K_4^-$. 

Problem~\ref{Turanwithcodeg} can be thought of as a way of viewing
Problems~\ref{Turannumber} and~\ref{codegreethreshold} together within a common
framework. In addition codegree constraints are natural in the context of
$3$-graphs, so that Problem~\ref{Turanwithcodeg} is appealing from an extremal
hypergraph perspective.

For the Fano plane $F_7$, Problem~\ref{Turanwithcodeg} is trivial from the work of Keevash and Sudakov~\cite{KeevashSudakov05}, F\"uredi and Simonovits~\cite{FurediSimonovits05} and Keevash~\cite{Keevash09}: the extremal configurations for the Tur\'an number and for the codegree threshold are identical for all $n$ sufficiently large, so that $\mathrm{ex}_{c}(n, F_7)= \mathrm{ex}(n, F_7)$ for all $c\in[0, 1/2]$ and all but finitely many $n$.

The situation is very different for $F_{3,2}$, where codegree-extremal
configurations have $n^3/18+o(n^3)$ $3$-edges, as we have shown, while the
extremal configurations have $2n^3/27+o(n^3)$ $3$-edges, 
i.e.\ about one and a third times as many. A first step towards the resolution of
Problem~\ref{Turanwithcodeg} for $F_{3,2}$ would be to identify the asymptotic
behaviour of  $\mathrm{ex}_{c}(n, F_{3,2})$ for $c\in[0,1/3]$.

A lower bound can be obtained by shifting weight in a continuous fashion from part $A$ to part $C$ in a $T_{A,B,C}$ construction, and so to move from Construction~\ref{onewaybip} (where $\vert A\vert =\frac{2n}{3}+O(1)$, $\vert B\vert =\frac{n}{3}+O(1)$ and $\vert C \vert =0$) to Construction~\ref{orcyclebip} (where all three parts have size $\frac{n}{3}+O(1)$). For $c\in[0,1/3]$, this gives the following:
\[\textrm{ex}_{c}(n, F_{3,2})\geq \left(\frac{1}{3}+3\left(\frac{1}{3}-c\right)^3\right)\binom{n}{3}+o(n^3).\]
%with $\vert A\vert =(\frac{2}{3}-c)n+O(1)$, $\vert B\vert =\frac{n}{3} +O(1)$ and $\vert C\vert =cn +O(1)$.
\begin{question}
	Is this lower bound asymptotically best possible?
\end{question}

\subsection*{Acknowledgements} We are grateful to an anonymous referee for a careful reading of a long paper.

\bibliographystyle{siam}
\bibliography{codegreebiblio}

\end{document}